\numberwithin{equation}{section} 
\newcommand{\R}{\mathbb{R}}
\newcommand{\Z}{\mathbb{Z}}
\newcommand{\N}{\mathbb{N}}
\newcommand{\calN}{\mathscr{N}}
\newcommand{\calS}{\mathcal{S}}
\newcommand{\calX}{\mathscr{X}}
\newcommand{\tin}{\text{in }}
\newcommand{\ton}{\text{on }}
\newcommand{\tfor}{\text{for }}
\DeclareMathOperator*{\esssup}{ess\,sup}
\newcommand{\np}[1]{(#1)}
\newcommand{\nb}[1]{[#1]}
\newcommand{\bp}[1]{\big(#1\big)}
\newcommand{\Bp}[1]{\bigg(#1\bigg)}
\newcommand{\set}[1]{{\{#1\}}}
\newcommand{\setc}[2]{{\{#1 : #2\}}}
\newcommand{\setcl}[2]{{\bigl\{#1 : #2\bigr\}}}
\newcommand{\norm}[1]{\lVert#1\rVert}
\newcommand{\snorm}[1]{{\lvert #1 \rvert}}
\newcommand{\snorml}[1]{{\bigl\lvert #1 \big\rvert}}
\newcommand{\snormL}[1]{{\Bigl\lvert #1 \Big\rvert}}
\newcommand{\CS}[1]{C^{#1}}
\newcommand{\LS}[1]{L^{#1}}
\newcommand{\LSloc}[1]{L_{\mathrm{loc}}^{#1}}
\newcommand{\WS}[2]{W^{#1,#2}}
\newcommand{\WSloc}[2]{W_\mathrm{loc}^{#1,#2}}
\newcommand{\DS}[2]{D^{#1,#2}}
\newcommand{\tracespss}{\mathfrak T^{q}(\Sigma)}
\newcommand{\tracesppp}{\mathfrak T^{p,q}_\perp(\torus\times\Sigma)}
\newcommand{\tracesptp}{\mathfrak T^{p,q}(\torus\times\Sigma)}
\DeclareMathOperator{\Div}{div}
\newcommand{\ddt}{\frac{{\mathrm d}}{{\mathrm d}t}}
\newcommand{\dx}{{\mathrm d}x}
\newcommand{\dr}{{\mathrm d}r}
\newcommand{\dtheta}{{\mathrm d}\theta}
\newcommand{\ds}{{\mathrm d}s}
\newcommand{\dt}{{\mathrm d}t}
\newcommand{\dy}{{\mathrm d}y}
\newcommand{\dS}{{\mathrm d}S}
\newcommand{\vvel}{v}
\newcommand{\vpres}{\mathscr{p}}
\newcommand{\zvel}{z}
\newcommand{\wvel}{w}
\newcommand{\wpres}{\mathcal{q}}
\newcommand{\uvel}{u}
\newcommand{\upres}{\mathcal{p}}
\newcommand{\uvels}{u_0}
\newcommand{\upress}{\mathcal{p}_0}
\newcommand{\uvelp}{u_\perp}
\newcommand{\upresp}{\mathcal{p}_\perp}
\newcommand{\ouvel}{\overline{\uvel}}
\newcommand{\oupres}{\overline{\mathcal{p}}}
\newcommand{\wakefct}{\mathscr{s}_\zeta}
\newcommand{\per}{\mathcal{T}}
\newcommand{\torus}{\mathbb{T}}
\newcommand{\proj}{\mathscr P}
\newcommand{\projcompl}{\mathscr P_\perp}
\newcommand{\fsolss}{\Gamma^\zeta_0}
\newcommand{\fsolpp}{\Gamma^\zeta_\perp}
\newcommand{\fsolpres}{P}
\DeclareMathOperator{\bop}{\mathcal{B}_R}
\theoremstyle{plain}
\newtheorem{thm}{Theorem}[section]
\newtheorem{lem}[thm]{Lemma}
\newtheorem{prop}[thm]{Proposition}
\theoremstyle{remark}
\newtheorem{rem}[thm]{Remark}
\begin{document}
\title{Approximation of time-periodic flow past a translating body by flows in bounded domains}

\author{Thomas Eiter%
\footnote{Freie Universit\"at Berlin, Department of Mathematics and Computer Science, Arnimallee 14, 14195 Berlin, Germany}
\textsuperscript{\,,}%
\footnote{%
Weierstrass Institute for Applied Analysis and Stochastics,
Mohrenstra\ss{}e 39, 10117 Berlin, Germany.
Email: {\texttt{thomas.eiter@wias-berlin.de}}}%
\and
Ana Leonor Silvestre%
\footnote{%
CEMAT and Department of Mathematics, Instituto Superior T\'ecnico, Universidade de
Lisboa, Av. Rovisco Pais 1, 1049-001 Lisboa, Portugal.
Email: {\texttt{ana.silvestre@math.tecnico.ulisboa.pt}}}%
}

\date{}
\maketitle

\begin{abstract}
We consider a time-periodic incompressible three-dimensional Navier-Stokes flow past a translating rigid body. In the first part of the paper, we establish the existence and uniqueness of strong solutions in the exterior domain $\Omega \subset {\mathbb R}^3$ that satisfy pointwise estimates for both the velocity and pressure. The fundamental solution of the time-periodic Oseen equations plays a central role in obtaining these estimates. The second part focuses on approximating this exterior flow within truncated domains $\Omega \cap B_R$, incorporating appropriate artificial boundary conditions on $\partial B_R$. For these bounded domain problems, we prove the existence and uniqueness of weak solutions. Finally, we estimate the error in the velocity component as a function of the truncation radius $R$, showing that, as $R \to \infty$, the velocities of the truncated problems converge, in an appropriate norm, to the velocity of the exterior flow.
\end{abstract}

\noindent
\textbf{MSC2020:} 35Q30, 76D05, 76D07, 35B10
\\
\noindent
\textbf{Keywords:}   Time-periodic solutions, incompressible Navier--Stokes flows, exterior domains, Oseen flows, fundamental solution, artificial boundary conditions, approximation, truncation error.

\tableofcontents

\section{Introduction}

Consider an incompressible viscous flow around a rigid body translating with a constant velocity $\zeta\in\R^3\setminus\set{0}$. For simplicity and without loss of generality, we take the kinematic viscosity of the fluid to be equal to 1. To describe the motion of the fluid, we use a reference frame attached to the solid. Additionally, we assume the fluid to be subject to an external body force and a 
distribution of velocities along the fluid-solid boundary, both time-periodic of period $\per>0$. Under these conditions, the motion of the fluid is governed by the following equations 
\begin{equation}\label{eq:nstp}
\left\{\ \begin{aligned}
\partial_t\uvel-\Delta\uvel-\zeta\cdot\nabla\uvel+\uvel\cdot\nabla\uvel+\nabla\upres&=f
&&
\tin\torus\times\Omega, \\
\nabla \cdot \uvel&=0 
&&
\tin\torus\times\Omega, \\
\uvel&=h
&&
\ton\torus\times\Sigma, \\
\lim_{\snorm{x}\to\infty}\uvel(t,x)&=0 
&&
\tfor t\in\torus.
\end{aligned}
\right.
\end{equation}
Here and throughout the paper, $\Omega\subset\R^3$ denotes the exterior domain occupied by the liquid, while $\Sigma\coloneqq\partial\Omega$ represents the common boundary between $\Omega $ and the compact set corresponding to the rigid body. We assume that $0 \in {\mathbb R}^3 \setminus \overline{\Omega}$. Since we are interested in time-periodic flows, the torus group $\torus\coloneqq\R/\per\Z$ serves as the time axis in system~\eqref{eq:nstp}, so that all functions therein 
are time-periodic with period $\per>0$. The functions
$\uvel\colon\torus\times\Omega\to\R^3$ and $\upres\colon\torus\times\Omega\to\R$ 
represent the unknown velocity field  and scalar pressure, respectively. 

In the context of applications, a crucial question is how to numerically solve the exterior problem \eqref{eq:nstp}. Truncating the fluid domain in order to discretize the equations using, for instance, finite elements necessarily introduces artificial boundaries, which must be chosen so as to ensure the well-posedness of the mathematical model and the numerical stability of the simulations. Prescribing the so-called ``do-nothing'' condition \cite{heywood1996artificial,rannacher2012} on the artificial boundaries arises naturally in the variational formulation after multiplication of the term $-\Delta\uvel + \nabla\upres$ with a test function and integration by parts. However, as shown in \cite{Braack2014,LanzHron2020}, this Neumann condition does not guarantee the well-posedness of the resulting boundary value problem for the Navier-Stokes equations. In \cite{PD1997}, the question of how to numerically solve the Dirichlet problem for the Stokes system in the exterior of a three-dimensional bounded Lipschitz domain is addressed using a modified ``do-nothing'' condition on the outer boundary of a truncated domain. A similar idea was subsequently exploited for more complex fluid models in \cite{DKN2021,DeuringKracmarExtStatNSFApprBddDom2004}, and we adopt it in this work, as described below.

\medskip

{\textbf{\emph{Formulation of the problem}}}. Our aim is to investigate how to approximate solutions $(\uvel,\upres)$ to system~\eqref{eq:nstp}, formulated in the unbounded domain $\Omega$,
by solutions $(\uvel_R,\upres_R)$ to problems posed in bounded domains 
$\Omega_R=\setc{x\in\Omega}{\snorm{x}<R}$
for $R>0$ sufficiently large.
More precisely, we consider solutions 
$(\vvel,\vpres)=(\uvel_R,\upres_R)$ 
to the truncated problems
\begin{equation}\label{eq:nstp.pert}
\left\{\ \begin{aligned}
\partial_t\vvel-\Delta\vvel-\zeta\cdot\nabla\vvel+\vvel\cdot\nabla\vvel+\nabla \vpres &=f
&&
\tin\torus\times\Omega_R, \\
\nabla \cdot \vvel&=0 
&&
\tin\torus\times\Omega_R, \\
\vvel&=h
&&
\ton\torus\times\Sigma, \\
\bop(\vvel,\vpres)&=0
&&
\ton\torus\times\partial B_R,
\end{aligned}\right.
\end{equation}
where $\bop$ is a suitable boundary operator. The artificial boundary condition $\bop(\vvel,\vpres)=0$ on $\torus\times\partial B_R$
must be selected to ensure both the well-posedness of the resulting mixed boundary value problem and the convergence of $\uvel_R$ to $\uvel$ 
as $R\to\infty$ in an appropriate norm. Our choice
\begin{equation}
\bop(\vvel,\vpres)(t,x)
= \frac{x}{R}\cdot\Bp{\nabla\vvel(t,x)- \vpres(t,x) {\mathsf I} - \frac{1}{2}\vvel(t,x)\otimes\vvel(t,x)}
+  \frac{1 + \wakefct(x)}{R}\vvel(t,x)
\label{abc}
\end{equation}
where $\wakefct(x):= \left[|\zeta| |x| + (\zeta\cdot x )\right] / 2$, is inspired by ~\cite{DeuringKracmarExtStatNSFApprBddDom2004}.
The present work is a generalization to the time-periodic case of the results obtained in~\cite{DeuringKracmarExtStatNSFApprBddDom2004} for the steady problem (see also~\cite{DKN2021} for a linearized steady flow around a rotating and translating body). Note that the operator $\bop$ defined in \eqref{abc} contains the pseudo-stress tensor $\widetilde{\mathsf{T}}( \vvel,\vpres) =\nabla \vvel - \vpres \mathsf I$.
However, all results in this paper remain valid  if $\widetilde{\mathsf{T}}$ is replaced by the classical Cauchy stress tensor $\mathsf{T}( \vvel ,\vpres)=\nabla\vvel+\nabla\vvel^\top-\vpres\mathsf I$. Here, the gradient of a vector-valued function of several variables is the transpose of the Jacobian matrix: $(\nabla v)_{ij}=\frac{\partial v_j}{\partial x_i},$ $i,j=1,2,3$.

To present the main results of the paper, we introduce additional notation and recall basic properties of the relevant function spaces and operators.

\medskip

{\textbf{\emph{Notations}}}.  Throughout the paper, we will consistently use the same font style to represent scalar, vector, and tensor-valued functions.  Standard notations $L^p(\mathcal O)$, $W^{k,p}(\mathcal O)$ and $H^k(\mathcal O)$ for suitable sets $\mathcal O$ 
will be adopted for Lebesgue and Sobolev spaces, and we occasionally write 
$\norm{\cdot}_{p;D}\coloneqq\norm{\cdot}_{L^p(\mathcal O)}$
and $\norm{\cdot}_{k,p;D}\coloneqq\norm{\cdot}_{W^{k,p}(\mathcal O)}$
for corresponding norms. 
We further introduce homogeneous 
Sobolev spaces by denoting $u\in D^{k,p}(\mathcal O)$ 
if and only $u$ is locally integrable with $\nabla^k u\in L^p(\mathcal O)$.
We further introduce the homogeneous 
By ${\mathcal D}({\mathbb T})$ we denote  the class of real-valued, infinitely differentiable, ${\mathcal T}$-periodic functions.

By ${\mathsf I}\in\R^{3\times 3}$ we denote the three-dimensional identity matrix.
We denote the Dirac delta distributions on ${\mathbb R}^3$, $\mathbb T$, and $\mathbb Z$ by $\delta_{{\mathbb R}^3}$, $\delta_{{\mathbb T}}$ and $\delta_{{\mathbb Z}}$, respectively. 
Here $\torus:=\R/\per\Z$, where the period $\per>0$ is fixed throughout the paper.
The whole-space problem associated with~\eqref{eq:nstp} will be formulated in the locally compact abelian group $G := {\mathbb T} \times  {\mathbb R}^3$, and the Dirac delta distribution on $G$, $\delta_G$, will be used to define the fundamental solution of the time-periodic problem. In the context of the exterior problem $\Omega$, the symbol $\delta_\Sigma$ will denote the Dirac delta distribution with support $\Sigma=\partial\Omega \subset {\mathbb R}^3$. 
By ${\mathscr S}'({\mathbb R}^3)$ and ${\mathscr S}'(G)$ we will denote the spaces of tempered distributions over ${\mathbb R}^3$ and $G$, respectively.

If $X$ is a Banach space, we denote  by $L^r({\mathbb T};X)$ the space of all Bochner measurable functions $u:{\mathbb T} \to X$ such that $\| u\|_{L^r({\mathbb T};X)}:=\left(\frac{1}{\mathcal T}\int_0^{\mathcal T} \|u(t)\|_X^r\ dt \right)^{\frac 1r}< \infty,$ for $1 \leq r < \infty$, and $\| u\|_{L^\infty({\mathbb T};X)}:=\operatorname{ess\,sup}_{t \in [0,\mathcal T]}\|u(t)\|_X < \infty,$ for $r = \infty$. 
We denote by $C({\mathbb T};X)$ the space of continuous functions 
$f:\torus\to X$, which corresponds to the continuous functions
$f:[0,\mathcal T] \to X$ that satisfy $f(0) = f(\mathcal T)$. 

We will utilize a precise decomposition of the solution into a steady-state component and a purely periodic component, as proposed and employed in ~\cite{Ky16,EiterKyed2017,GaldiKyedtpflowViscLiquidBody18,EiterKyedEstTPFundSol2018,EiterShibata_FlowPastBodyOscBdry}. Specifically,  time-periodic functions $v \colon\torus\to X$ are split into a steady-state part $v_0=\proj v$ and a purely periodic part $v_\perp=\projcompl v$, where the projections $\proj$ and $\projcompl$ are defined by
\begin{equation}
\label{eq:tp.split}
\proj v
\coloneqq \int_\torus v(t)\,\dt
= \frac{1}{\per}\int_0^\per v(t)\,\dt,
\qquad
\projcompl v \coloneqq v - \proj v.
\end{equation}

To specify the class of admissible boundary traces of strong solutions to~\eqref{eq:nstp} we define 
\[
\tracesptp\coloneqq
\setcl{\vvel|_{\torus\times\Sigma}}{
\vvel\in\LS{p}(\torus;\WS{2}{q}(\Omega)^3), \,\partial_t\vvel\in\LS{p}(\torus;\LS{q}(\Omega)^3)}
\]
for $p,q\in(1,\infty)$,
and we equip this space with the norm
\[
\begin{aligned}
\norm{h}_{\tracesptp}
\coloneqq \inf\setcl{
\norm{\vvel}_{\LS{p}(\torus;\WS{2}{q}(\Omega))}
+\norm{\partial_t\vvel}_{\LS{p}(\torus;\LS{q}(\Omega))}
}{
h=\vvel|_{\torus\times\Sigma}
}.
\end{aligned}
\]
This function space can be decomposed into 
spaces of steady-state and of purely periodic functions, given by 
\[
\tracespss
\coloneqq\setcl{\proj h}{h\in\tracesptp},
\qquad 
\tracesppp
\coloneqq\setcl{\projcompl h}{h\in\tracesptp}.
\]
Then $\tracespss$ coincides with the Sobolev--Slobodeckij space 
$\WS{2-1/q}{q}(\Sigma)$.
Similarly, one can identify $\tracesptp$ and $\tracesppp$ 
with suitable interpolation spaces,
which are of Triebel--Lizorkin and Besov type.
Since these involved constructions are not necessary for our approach, 
we omit them here.

When studying the exterior problem, to quantify the decay of functions in a suitable way, 
we introduce the weight function
\[
\begin{aligned}
\nu^\alpha_\beta(x;\zeta)
&\coloneqq \snorm{x}^\alpha (1+\wakefct(x))^\beta,
\qquad
\wakefct(x)\coloneqq \frac{1}{2}\big[|\zeta|\,|x| +\zeta\cdot x \big]
\end{aligned}
\]
for $\alpha,\beta\in\R$,
and the corresponding weighted norms
\[
\begin{aligned}
\norm{v}_{\infty,\nu^\alpha_\beta(\cdot;\zeta);D}
&\coloneqq \esssup_{x\in D}\nu^\alpha_\beta(x;\zeta)\snorm{v(x)},
\\
\norm{v}_{\infty,\nu^\alpha_\beta(\cdot;\zeta);\torus\times D}
&\coloneqq \esssup_{(t,x)\in \torus\times D} \nu^\alpha_\beta(x;\zeta)\snorm{v(t,x)},
\end{aligned}
\]
where $D\subset\R^3$ is an open set.
When $\beta=0$, we simply write $\nu^\alpha\coloneqq\nu^\alpha_0(\cdot,\zeta)$, 
so that
$\nu^\alpha(x):=\snorm{x}^\alpha$.

\medskip

{\textbf{\emph{Main results}}}. The paper's first main result, Theorem \ref{thm:strongsol}, establishes the existence and uniqueness of strong solutions to problem \eqref{eq:nstp}, assuming that $f\in \LSloc{1}(\torus\times\Omega)^3$ and $h\in \tracesptp$ with
$$
\norm{\proj f}_{\infty,\nu^{5/2}_1(\cdot;\zeta); \Omega}
+\norm{\projcompl f}_{\infty,\nu^{4}(\cdot;\zeta); \torus\times \Omega}
+\norm{h}_{\tracesptp}
$$
sufficiently small. The corresponding solution $(\uvel,\upres)$ possesses the same decay as the time-periodic fundamental solution, exhibiting an anisotropic decay determined by the steady-state part of the fundamental solution. The decay rates of the pressure and the purely periodic part of the velocity field depend on whether the total flux $\Phi$ across $\Sigma$, defined by 
$$\Phi(t):= \int_\Sigma h(t,x)\cdot n\,\dS(x),$$ 
is constant in time.

Subsequently, we consider problem~\eqref{eq:nstp.pert} incorporating the artificial boundary condition \eqref{abc} on the outer boundary of the truncated spatial domain. The second main result, Theorem \ref{thm:weakR}, establishes the existence and conditional uniqueness of weak solutions $(\uvel_R,\upres_R)$ to \eqref{eq:nstp.pert}-\eqref{abc}, under weaker assumptions on the regularity of the boundary data and provided that $\| \Phi\|_{\infty,\mathbb T}$ is small.

Assuming the validity of the earlier well-posedness results, as the third main result of the paper, Theorem \ref{thm:error}, we prove the following convergence for the gradient of the velocity and for its trace on the artificial boundaries:
\begin{equation}
\norm{\nabla\uvel-\nabla\uvel_R}_{L^{2}(\torus\times\Omega_R)} + \| u -  u_R \|_{L^2({\mathbb T}\times \partial B_R)} 
\leq C R^{-\sfrac{1}{2}}.
\label{converg}
\end{equation}

\medskip

{\textbf{\emph{Structure of the paper}}}. A review of the fundamental solutions of the steady-state and time-periodic Oseen equations, along with the estimates useful for our study, is provided in Section \ref{aux}. Section \ref{exterior} addresses the well-posedness of the exterior problem~\eqref{eq:nstp}, including the precise spatial decay of the velocity and pressure. In Section \ref{truncateddomain}, we establish existence and conditional uniqueness of weak solutions to the system~\eqref{eq:nstp.pert}--\eqref{abc}. Finally, the estimate for the truncation error of the velocity field is derived in Section \ref{error}.

\section{Fundamental solutions}
\label{aux}

In this section, we introduce the fundamental solution of the time-periodic Oseen equations,
\begin{equation}\label{eq:oseentp}
\left\{\ \begin{aligned}
\partial_t\uvel-\Delta\uvel-\zeta\cdot\nabla\uvel+\nabla\upres&=f
&&
\tin\torus\times\Omega, \\
\nabla \cdot \uvel&=0 
&&
\tin\torus\times\Omega, \\
\uvel&=h
&&
\ton\torus\times\Sigma.
\end{aligned}\right.
\end{equation} 

We begin by recalling several fundamental solutions for steady problems. In ${\mathbb R}^3$, the fundamental solution of the Laplace operator $-\Delta$ is given by
\begin{equation} E(x)=  \frac{1}{4\pi |x|}, \label{la-funda}
\end{equation}
that is, $- \Delta E = \delta_{{\mathbb R}^3}$ in ${\mathscr S}'({\mathbb R}^3)$. The fundamental solution of the 3D Stokes system is the pair $(\Gamma_{0}^{0}, P) \in {\mathscr S}'({\mathbb R}^3)^{3 \times 3} \times {\mathscr S}'({\mathbb R}^3)^{3}$ given by (see, for example, \cite{GaldiBook2011})
$$
(\Gamma_{0}^{0}, P)(x) = \left(\frac{1}{8\pi  |x|}\left({\mathsf I} + \hat{x} \otimes \hat{x} \right),\frac{1}{4\pi |x|^2}\hat{x} \right)
$$
where $\hat{x} := x / |x|$ ($x \in {\mathbb R}^3\setminus \{0\}$), and the pressure component satisfies
\begin{equation}
 P(x) = - \nabla E(x). 
 \label{PE}
\end{equation}
The fundamental solution of the 3D Oseen system has the same pressure part, $P(x) = - \nabla E(x)$, and the velocity component is given by (see \cite{GaldiBook2011,ALSOseenFS2019})
\begin{equation}
\begin{aligned}
 \Gamma^{\zeta}_{0}(x) & =
 \frac{1}{4\pi |x|}\exp\left(-\wakefct(x)\right)  {\mathsf I} -  \frac{|\zeta|}{16 \pi  \wakefct(x)}  \exp\left(-\wakefct(x)\right)   \left(  \hat{x} + \hat{\zeta}  \right) \otimes \left( \hat{x} + \hat{\zeta} \right)  
\\
&\quad  -  \frac{1-\exp(-\wakefct(x))}{8\pi |x| \wakefct(x)}  \left( {\mathsf I}  - \hat{x} \otimes \hat{x} \right) 
\\
&\quad +  \frac{|\zeta|}{16 \pi}   \frac{1-\exp(-\wakefct(x))}{\wakefct(x)^2}   \left(  \hat{x} + \hat{\zeta}  \right) \otimes \left( \hat{x} + \hat{\zeta} \right).
 \end{aligned}
\label{ose}
\end{equation}

In the time-periodic case, the fundamental solutions can be identified as solutions to a system of partial differential equations on $G$. Following \cite{Ky16,EiterKyed2017,GaldiKyedtpflowViscLiquidBody18}, the fundamental solution of the Stokes ($\zeta=0$) or Oseen ($\zeta\not =0$) equations is a pair $(\Gamma^\zeta, Q) \in {\mathscr S}'(G)^{3 \times 3} \times {\mathscr S}'(G)^{3}$ satisfying
\begin{equation}
\left\{ \ 
\begin{aligned}
\partial_t \Gamma^\zeta -  \Delta \Gamma^\zeta  +  \nabla Q - (\zeta \cdot \nabla) \Gamma^\zeta & = \delta_G {\mathsf I} ,  \\
\nabla \cdot \Gamma^\zeta & = 0.  
\end{aligned}
\right.
\label{periodicfundsol}
\end{equation}
The pressure component is given by (recall~\eqref{PE}) 
$$
Q = \delta_{{\mathbb T}} \otimes P, 
$$
meaning $Q(t,x) = \delta_{{\mathbb T}}(t)P(x)$. As in the Stokes case \cite{Ky16}, the velocity part $\Gamma^\zeta$ is a sum of the steady-state Oseen fundamental solution and a purely time-periodic remainder satisfying good integrability and pointwise decay estimates. The pressure part, as in the steady regime, is identical to that of the Stokes case, that is, $Q$ is independent of $\zeta$. The velocity component $\Gamma^\zeta$  admits the following decomposition
$$
\Gamma^\zeta = 1_{{\mathbb T}} \otimes \Gamma_{0}^\zeta + \Gamma_{\perp}^\zeta, 
$$
with $\Gamma_{0}^\zeta$ the velocity part of the steady fundamental solution, defined in~\eqref{ose}, and $\Gamma_{\perp}^\zeta$ the purely periodic part of $\Gamma^\zeta$, defined by
$$
\Gamma_{\perp}^\zeta(t,x) = {\mathcal F}^{-1}_G \left[  \frac{1-\delta_{\mathbb Z}(k)}{|\xi|^2 + i \left(    \frac{2 \pi k}{\mathcal T}  - \zeta \cdot \xi \right)} \left( {\mathsf I} - \hat{\xi} \otimes \hat{\xi}\right)\right],
$$
where ${\mathcal F}_G: {\mathscr S}'(G) \to {\mathscr S}'(\hat{G})$, $\hat{G}:={\mathbb Z} \times {\mathbb R}^3$, is the Fourier transform on the group $G$.

We recall pointwise estimates of the different parts of the fundamental solution.

\begin{prop}
\label{prop:decay.fundsol}
For all $\alpha\in\N_0^3$, $r\in[1,\infty)$ and $\varepsilon>0$
there are $C_1,C_2>0$ 
such that for all $x\in\R^3$ with $\snorm{x}\geq \varepsilon$ 
it holds 
\begin{align}
\snorm{D_x^\alpha \fsolss(x)} 
&\leq  C_1\nu^{-1-\snorm{\alpha}/2}_{-1-\snorm{\alpha}/2}(x;\zeta),
\label{est:fsolss.decay}
\\
\norm{D_x^\alpha \fsolpp(\cdot,x)}_{\LS{r}(\torus)} 
&\leq C_2 \nu^{-3-\snorm{\alpha}}(x).
\label{est:fsolpp.decay}
\end{align}
Here $C_1=C_1(\alpha,r,\varepsilon)>0$ and
$C_2=C_2(\alpha,r,\varepsilon,\theta)>0$ are independent of $\zeta$ and $\per$
if $\per\snorm{\zeta}^2\leq \theta$.
\end{prop}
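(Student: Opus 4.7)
The plan is to handle the two estimates separately: the steady-state bound \eqref{est:fsolss.decay} follows from the explicit formula \eqref{ose}, while the purely periodic bound \eqref{est:fsolpp.decay} requires Fourier-analytic arguments. For \eqref{est:fsolss.decay}, the strategy is to differentiate the representation \eqref{ose} term by term. Each summand in $\fsolss$ involves either a factor $e^{-\wakefct(x)}$ or a factor of the form $(1-e^{-\wakefct(x)})/\wakefct(x)^j$ with $j\in\set{1,2}$, multiplied by algebraic expressions in $\snorm{x}$ and the unit vectors $\hat x,\hat\zeta$. Outside the wake, where $\wakefct(x)\gtrsim\snorm{x}$, the exponential provides super-algebraic decay and the bound by $\nu^{-1-\snorm{\alpha}/2}_{-1-\snorm{\alpha}/2}$ is immediate; inside the wake, where $\wakefct(x)$ is small, one uses the elementary estimates $\snorm{1-e^{-s}}\leq\min(s,1)$ and $\snorm{s-1+e^{-s}}\leq C s^2$ to see that the apparent singularities at $\wakefct=0$ cancel and the residual algebraic decay is exactly $\snorm{x}^{-1-\snorm{\alpha}/2}$. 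Since $\zeta$ enters $\fsolss$ only through $\wakefct$ and $\hat\zeta$, the resulting constant depends only on $\alpha$ and $\varepsilon$. These calculations are classical and can be found, for instance, in \cite{GaldiBook2011,ALSOseenFS2019}.

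For \eqref{est:fsolpp.decay}, I would exploit the explicit Fourier-series representation
\[
\fsolpp(t,x) = \sum_{k\in\Z\setminus\set{0}} e^{i2\pi kt/\per}\, g_k(x),
\qquad
g_k(x) = \mathcal F^{-1}_{\R^3}\!\left[\frac{\mathsf I - \hat\xi\otimes\hat\xi}{\snorm{\xi}^2 + i(2\pi k/\per - \zeta\cdot\xi)}\right].
\]
For $k\neq 0$ the denominator is bounded below in modulus by a positive constant times $\max(\snorm{\xi}^2,2\pi\snorm{k}/\per)$, so the symbol is smooth away from the bounded discontinuity at $\xi=0$ and decays in $\snorm{\xi}$. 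Repeated integration by parts in the defining Fourier integral yields the spatial decay $\snorml{D_x^\alpha g_k(x)}\leq C_N\,\snorm{k}^{-N}\snorm{x}^{-3-\snorm{\alpha}}$ for any $N$, with constants uniform in $k$, and after summation over $k\neq 0$ this gives the desired $\LS{r}(\torus)$-bound. A cleaner equivalent route is via the Poisson-summation identity
\[
\fsolpp(t,x) = \sum_{n\geq 0} K^\zeta(t+n\per,x) - \frac{1}{\per}\int_0^\infty K^\zeta(s,x)\,\ds,
\]
where $K^\zeta$ is the Oseen--heat kernel on $\R\times\R^3$; the Gaussian factor $s^{-3/2}e^{-\snorm{x+s\zeta}^2/(4s)}$ confers super-algebraic spatial decay as soon as $s$ is bounded away from zero, which is the case in every surviving term of the purely periodic remainder.

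The main technical point is ensuring that $C_2$ is uniform in both $\zeta$ and $\per$ under the scaling constraint $\per\snorm{\zeta}^2\leq\theta$. The Oseen symbol is invariant under the parabolic rescaling $x\mapsto\lambda x$, $t\mapsto\lambda^2 t$, $\zeta\mapsto\lambda^{-1}\zeta$, $\per\mapsto\lambda^2\per$, which also preserves $\per\snorm{\zeta}^2$ and transforms the isotropic weight $\nu^\alpha$ by $\lambda^\alpha$. By choosing $\lambda=\snorm{\zeta}^{-1}$ one reduces to the normalized regime $\snorm{\zeta}=1$, $\per\leq\theta$, in which the remaining parameters lie in a bounded set and the previous estimates yield constants depending only on $\alpha$, $r$, $\varepsilon$, and $\theta$. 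Combined with the pointwise bounds for the steady part and the Fourier (or heat-kernel) analysis of the periodic remainder, this yields \eqref{est:fsolss.decay} and \eqref{est:fsolpp.decay} with the stated uniformity.
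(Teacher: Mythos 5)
The paper's own proof is a one-line citation to \cite{Farwig_habil}, \cite{EiterKyedEstTPFundSol2018}, and \cite{EiterShibata_FlowPastBodyOscBdry}, whereas you attempt a from-scratch argument. That is a legitimately different route, and your high-level plan (differentiate \eqref{ose} term by term for the steady part; analyse the Fourier symbol or use the heat-kernel periodisation for the purely periodic part; exploit parabolic scaling for uniformity in $\zeta,\per$) is the right circle of ideas. But two of your steps are not sound as stated.

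First, the claim that ``repeated integration by parts in the defining Fourier integral yields $\snorm{D_x^\alpha g_k(x)}\leq C_N\snorm{k}^{-N}\snorm{x}^{-3-\snorm{\alpha}}$ for any $N$, with constants uniform in $k$'' does not follow from what you describe. After multiplying by $\xi^\alpha$, the symbol still carries the factor $\mathsf I-\hat\xi\otimes\hat\xi$, whose $\xi$-derivatives blow up like $\snorm{\xi}^{-j}$ at the origin; so one cannot integrate by parts an arbitrary number of times without spoiling the integrability near $\xi=0$, and the contribution of this singularity decays in $k$ only as fast as the size of the denominator allows (roughly $\snorm{k}^{-1}$, not $\snorm{k}^{-N}$). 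A careful split into the smooth part (which does give Yukawa-type exponential decay in $\sqrt{\snorm{k}}$) and the homogeneous part at $\xi=0$ is needed; this is exactly why the literature, including \cite{EiterKyedEstTPFundSol2018}, uses the heat-kernel decomposition you mention as a second route. Second, the scaling argument as written has a gap: under $\lambda=\snorm{\zeta}^{-1}$ the spatial region $\snorm{x}\geq\varepsilon$ is mapped to $\snorm{y}\geq\varepsilon\snorm{\zeta}$, so when $\snorm{\zeta}$ is small you need the normalised estimate at points arbitrarily close to the origin, and ``the remaining parameters lie in a bounded set'' does not resolve this. One can repair it by noting that for $\snorm{y}\leq 1$ the singularity of $D_y^\alpha\fsolpp$ is of order $\snorm{y}^{-3+2/r-\snorm{\alpha}}$, which is weaker than the stated $\snorm{y}^{-3-\snorm{\alpha}}$, so the bound persists with a uniform constant down to $\snorm{y}\to0$; but this observation is essential and is missing from your sketch. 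Likewise, for the steady part \eqref{est:fsolss.decay} the independence of $C_1$ from $\zeta$ rests on cancellations between the explicit $\snorm{\zeta}$-factors in \eqref{ose} and the $\wakefct$-dependent pieces (e.g. $\snorm{\hat x+\hat\zeta}^2=4\wakefct(x)/(\snorm{\zeta}\snorm{x})$), which deserves at least a remark rather than the assertion that ``the resulting constant depends only on $\alpha$ and $\varepsilon$.''
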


\begin{proof}
See \cite[Lemma 3.2]{Farwig_habil} and 
\cite[Theorem 1.1]{EiterKyedEstTPFundSol2018}.
For the uniformity of the estimates, see also 
\cite[Theorem 5.8]{EiterShibata_FlowPastBodyOscBdry}.
\end{proof}

When using the anisotropic estimates of $\fsolss$,
we will come across integrals of the form
\begin{equation}
{\mathcal J}_R(a,b):= \int_{\partial B_R} |x|^{-a}(1+ \wakefct(x))^{-b}\, \dS(x) = \int_{\partial B_R} \nu^{-a}_{-b}(x;\zeta)\, \dS(x)
\label{eq:Jab}
\end{equation}
for $a,b\geq0$, $R>0$.
In \cite[Lemma 2.3]{Farwig_statextOseenNSE_92} (see also \cite[Lemma 3.1]{DeuringKracmarExtStatNSFApprBddDom2004}), using polar coordinates, it is shown that
\begin{equation}
{\mathcal J}_R(a,b) \leq C(b) R^{2 - a -\min\{1,b\}} , \quad b \not= 1.
\label{far}
\end{equation}
When dealing with $\fsolpp$,
we shall need the following integrability properties of $\fsolpp$.
\begin{prop}
We have
\begin{align}
&\forall q\in\Bp{1,\frac{5}{3}}:\quad \fsolpp\in\LS{q}(\torus\times\R^3)^{3\times 3},
\label{eq:fsolpp.intfct}\\
&\forall q\in\bigg[1,\frac{5}{4}\bigg):\quad 
\partial_j\fsolpp\in\LS{q}(\torus\times\R^3)^{3\times 3}\quad(j=1,2,3).
\label{eq:fsolpp.intgrad}
\end{align}
If $0<\snorm{\zeta}\leq\zeta_0$ for some $\zeta_0>0$, 
the respective $\LS{q}$-norm can be bounded uniformly in $\zeta$.
\end{prop}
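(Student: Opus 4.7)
My plan is to split $\torus\times\R^3$ into a far-field region $\{|x|\geq 1\}$ and a near-field region $\{|x|\leq 1\}$, and analyze each with a different tool. The far-field part is essentially bookkeeping on top of the pointwise estimates already in hand; the near-field part requires Fourier-analytic work and is where the upper thresholds $5/3$ and $5/4$ come from.

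For the far-field, Fubini combined with \eqref{est:fsolpp.decay} (taking $r=q$) gives
\begin{equation*}
\int_{|x|\geq 1}\norm{D_x^\alpha\fsolpp(\cdot,x)}_{L^q(\torus)}^q\,\dx
\leq C\int_{|x|\geq 1}|x|^{-(3+|\alpha|)q}\,\dx,
\end{equation*}
which is finite iff $(3+|\alpha|)q>3$. For $|\alpha|=0$ this forces $q>1$, matching the lower endpoint in \eqref{eq:fsolpp.intfct}; for $|\alpha|=1$ it requires only $q>3/4$, which is automatic for $q\geq 1$ and hence consistent with \eqref{eq:fsolpp.intgrad}. Uniformity in $\zeta$ for $0<|\zeta|\leq\zeta_0$ is inherited directly from the corresponding statement in Proposition \ref{prop:decay.fundsol}.

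For the near-field, pointwise decay is unavailable near the origin, so I would work from the explicit representation $\fsolpp(t,x)=\sum_{k\neq 0}e^{2\pi i k t/\per}\,g_k^\zeta(x)$, where $g_k^\zeta$ is the spatial inverse Fourier transform of the Oseen resolvent symbol $m_k^\zeta(\xi)=[|\xi|^2+i(2\pi k/\per-\zeta\cdot\xi)]^{-1}({\mathsf I}-\hat{\xi}\otimes\hat{\xi})$. At high spatial frequencies $m_k^\zeta(\xi)$ decays like $|\xi|^{-2}$, so each $g_k^\zeta$ has the local regularity of a Newtonian potential modulo the Helmholtz projector, which is bounded on $L^q(\R^3)$ for $1<q<\infty$; at low frequencies one gains $|k|^{-1}$ decay, which is needed to sum over $k$. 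Combining these two regimes with standard $L^q$-multiplier estimates yields the near-field bound, uniformity in $\zeta$ following by treating $\zeta\cdot\xi$ as a perturbation lower order than $|\xi|^2$ (equivalently, a perturbation around the Stokes case $\zeta=0$).

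The principal obstacle, and the reason for the exponents $5/3$ and $5/4$, is the heat-kernel-type singularity of $\fsolpp$ at $(t,x)=(0,0)$. Under parabolic scaling $x=\sqrt{t}\,y$ one computes
\begin{equation*}
\int_0^\per\int_{\R^3}\big[(4\pi t)^{-3/2}e^{-|x|^2/4t}\big]^q\,\dx\,\dt
=C\int_0^\per t^{3(1-q)/2}\,\dt,
\end{equation*}
which is finite iff $q<5/3$; the analogous computation for the spatial gradient forces $q<5/4$. Because $\fsolpp$ inherits exactly this leading-order singularity (modulo the bounded Helmholtz projector and the lower-order Oseen drift), these are precisely the thresholds that carry over to $\fsolpp$ and $\nabla\fsolpp$.
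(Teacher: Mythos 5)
The paper itself does not prove this proposition; it cites \cite[Theorem~1.1]{EiterKyedEstTPFundSol2018} and \cite[Theorem~5.8]{EiterShibata_FlowPastBodyOscBdry}. So your attempt is by default a genuinely different contribution, and it is worth assessing on its own merits.

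Your far-field computation is correct and rigorous. Taking $r=q$ in \eqref{est:fsolpp.decay}, Fubini converts $\int_{|x|\geq 1}\norm{D_x^\alpha\fsolpp(\cdot,x)}^q_{L^q(\torus)}\,\dx$ into exactly the far-field piece of $\norm{D_x^\alpha\fsolpp}^q_{L^q(\torus\times\{|x|\geq1\})}$, and the constraints $q>1$ (for $|\alpha|=0$) and $q>3/4$ (for $|\alpha|=1$) match the stated lower endpoints. Uniformity in $\zeta$ over $0<|\zeta|\leq\zeta_0$ does follow from Proposition~\ref{prop:decay.fundsol}.

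The near-field part, however, has a real gap and is not a proof as written. Your sketch via the Fourier series $\fsolpp=\sum_{k\neq0}e^{2\pi ikt/\per}g_k^\zeta$ does not close: a scaling argument on the resolvent symbol gives $\norm{g_k^\zeta}_{L^q(\R^3)}\sim|k|^{-(3-q)/(2q)}$, so the naive triangle inequality in $t$ produces the divergent series $\sum_{k\neq0}|k|^{-(3-q)/(2q)}$ for every $q\geq1$ (it would require $q<1$). You would need to replace this by something more refined --- e.g.\ a Plancherel or Hausdorff--Young argument in $t$ combined with interpolation, or a pointwise-in-$x$ estimate of $\norm{\fsolpp(\cdot,x)}_{L^q(\torus)}$ valid also near $x=0$ --- and you do not indicate how to do so. Separately, your appeal to the ``bounded Helmholtz projector'' and to ``standard $L^q$-multiplier estimates'' requires $1<q<\infty$; yet \eqref{eq:fsolpp.intgrad} explicitly includes $q=1$, which is the physically important $L^1$-integrability of $\nabla\fsolpp$ used later in Lemma~\ref{lem:fsolpp.conv}. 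That endpoint cannot be obtained by multiplier arguments and needs to be handled directly, e.g.\ from the explicit kernel or from a decomposition $\fsolpp=\fsoltp-1_\torus\otimes\fsolss$ plus the known heat-kernel estimates for $\fsoltp$ near $(0,0)$ and the $|x|^{-1}$ (respectively $|x|^{-2}$ for the gradient) singularity of $\fsolss$. Finally, the assertion that $\fsolpp$ ``inherits exactly'' the parabolic heat-kernel singularity at $(0,0)$ is the right intuition --- and your parabolic-scaling computation does reproduce the thresholds $5/3$ and $5/4$ --- but it is presented as a heuristic, not derived; as it stands this is the one genuinely missing ingredient that a complete proof would have to supply.
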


\begin{proof}
See
\cite[Theorem 1.1]{EiterKyedEstTPFundSol2018} and
\cite[Theorem 5.8]{EiterShibata_FlowPastBodyOscBdry}.
\end{proof}

\section{Existence in the exterior domain}
\label{exterior}

We return to the problem \eqref{eq:nstp} in the exterior domain $\Omega$ and show existence of solutions with suitable decay properties.
In what follows, we use the decomposition of time-periodic functions into a steady-state part $f_0=\proj f$ and a purely periodic part $f_\perp=\projcompl f$
introduced in~\eqref{eq:tp.split}. 
 Our aim is to prove:
\begin{thm} \label{thm:strongsol}
Let $\Omega\subset\R^3$ be an exterior domain with $\CS{2}$-boundary 
$\Sigma=\partial\Omega$.
Let $\zeta_0>0$ and $p,q\in(1,\infty)$.
Then there exists $\varepsilon>0$ such that for all 
$f\in \LSloc{1}(\torus\times\Omega)^3$ and $h\in \tracesptp$ 
satisfying
\begin{equation}
\norm{\proj f}_{\infty,\nu^{5/2}_1(\cdot;\zeta); \Omega}
+\norm{\projcompl f}_{\infty,\nu^{3+\delta}(\cdot;\zeta); \torus\times \Omega}
+\norm{h}_{\tracesptp}
\leq\varepsilon^2
\label{restdata}
\end{equation}
for some $\delta\in(0,1]$,  
and for all $\zeta\in\R^3\setminus\set{0}$ with $\snorm{\zeta}\leq \zeta_0$,
there exists a 
unique strong solution $(\uvel,\upres)$ to \eqref{eq:nstp}
satisfying
\[
\uvel\in\LS{p}(\torus;\DS{2}{q}(\Omega)^3),
\qquad
\partial_t\uvel\in\LS{p}(\torus;\LS{q}(\Omega)^3),
\qquad
\upres\in\LS{p}(\torus;\DS{1}{q}(\Omega))
\]
and
\begin{equation}
\begin{aligned}
&\norm{\nabla^2\uvel,\partial_t\uvel,\nabla\upres}_{\LS{p}(\torus;\LS{q}(\Omega))}
+\norm{\proj\uvel}_{\infty,\nu^1_1(\cdot;\zeta); \Omega}
+\norm{\nabla\proj\uvel}_{\infty,\nu^{3/2}_{3/2}(\cdot;\zeta); \Omega}
+ \norm{\proj\upres}_{\infty,\nu^{2}; \Omega} 
\\
&\qquad\qquad
+\norm{\projcompl\uvel}_{\infty,\nu^{2}; \torus\times \Omega}
+\norm{\nabla\projcompl\uvel}_{\infty,\nu^{3}; \torus\times \Omega}
+\norm{\projcompl\upres}_{\infty,\nu^1; \torus\times \Omega} 
\leq \varepsilon.
\end{aligned}
\label{estssol}
\end{equation}
If the boundary data satisfies
\begin{equation}
\label{eq:constantflux}
\forall t\in\torus: \quad 
\int_\Sigma \partial_t h(t,x)\cdot n\,\dS(x)=0,
\end{equation}
then 
\begin{equation}
\norm{\projcompl\uvel}_{\infty,\nu^{3}; \torus\times \Omega}
+\norm{\nabla\projcompl\uvel}_{\infty,\nu^{3+\delta}; \torus\times \Omega}
+ \norm{\projcompl\upres}_{\infty,\nu^2; \torus\times \Omega} 
\leq \varepsilon. 
\label{eq:pres.constantflux}
\end{equation}
\end{thm}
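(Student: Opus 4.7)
The plan is to construct $(\uvel,\upres)$ by a Banach fixed-point argument on a linearisation of~\eqref{eq:nstp}, working in a function space that couples the $\LS{p}(\torus;\WS{2}{q}(\Omega))$-maximal regularity norm with the weighted pointwise norms appearing in~\eqref{estssol}. First I lift the boundary data to a function $H$ on $\torus\times\Omega$ satisfying $\norm{H}_{\LS{p}(\torus;\WS{2}{q}(\Omega))}+\norm{\partial_t H}_{\LS{p}(\torus;\LS{q}(\Omega))}\lesssim\norm{h}_{\tracesptp}$, which is solenoidal when the total flux is time-independent and otherwise carries the flux by a (time-independent) flux carrier plus a small non-solenoidal correction. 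Setting $\tilde\uvel\coloneqq\uvel-H$ reduces~\eqref{eq:nstp} to a problem with zero Dirichlet data and a modified forcing; the Picard map $w\mapsto\tilde\uvel$ is then defined by solving the linear time-periodic Oseen system
\[
\partial_t\tilde\uvel-\Delta\tilde\uvel-\zeta\cdot\nabla\tilde\uvel+\nabla\tilde\upres=F(w,H),\quad \nabla\cdot\tilde\uvel=0,\quad \tilde\uvel|_\Sigma=0,
\]
with $F(w,H)$ collecting the data and all quadratic terms in $w+H$.

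For the linear problem I combine exterior-domain $\LS{p}$--$\LS{q}$ maximal regularity with a representation through the fundamental solution $(\Gamma^\zeta,Q)$ of Section~\ref{aux}. Splitting via $\proj$ and $\projcompl$, the steady-state component $\proj\tilde\uvel$ solves a steady Oseen system, so convolution with $\fsolss$ delivers, via the anisotropic estimate~\eqref{est:fsolss.decay}, the bounds $\norm{\proj\tilde\uvel}_{\infty,\nu^1_1}$ and $\norm{\nabla\proj\tilde\uvel}_{\infty,\nu^{3/2}_{3/2}}$ from data decaying like $\nu^{-5/2}_{-1}$, while $\proj\tilde\upres$ inherits the Stokes decay $\nu^{-2}$ from $P=-\nabla E$. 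For the purely periodic part, convolution with $\fsolpp$ combined with~\eqref{est:fsolpp.decay} and standard weighted convolution lemmas yields $\norm{\projcompl\tilde\uvel}_{\infty,\nu^2}$ and $\norm{\nabla\projcompl\tilde\uvel}_{\infty,\nu^3}$. Crucially, since $(\delta_{\torus}\otimes P)\ast\projcompl F\equiv 0$ in the whole-space convolution, the pressure $\projcompl\tilde\upres$ is controlled only through an auxiliary boundary corrector whose source carries $\partial_t H$, allowing at most the monopole decay $\nu^{-1}$.

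The nonlinear closure uses pointwise multiplication of the weights: for example $\snorml{\proj\uvel\cdot\nabla\proj\uvel}\lesssim\nu^{-1}_{-1}\,\nu^{-3/2}_{-3/2}\leq\nu^{-5/2}_{-1}$, whereas the cross terms $\proj\uvel\cdot\nabla\projcompl\uvel$ and $\projcompl\uvel\cdot\nabla\proj\uvel$ fall into the $\projcompl$-bucket and are bounded at infinity by $\nu^{-1}_{-1}\nu^{-3}$ and $\nu^{-2}\nu^{-3/2}_{-3/2}$ respectively, both majorised by $\nu^{-(3+\delta)}$ for $\delta\in(0,1]$; the quadratic nature also closes the maximal-regularity estimate. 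For $\varepsilon$ sufficiently small the map is a contraction on the closed $\varepsilon$-ball of the combined norm, giving existence and uniqueness. The main obstacle lies precisely in this coupling: the anisotropic steady-state weight $\nu^\alpha_\beta$ and the isotropic purely periodic weight $\nu^\alpha$ must be calibrated in every cross term, and the slow decay of $\projcompl\tilde\upres$ must not feed back into the velocity estimate through the Helmholtz projection; these matchings are what dictate the precise exponents $5/2$, $1$, $3+\delta$ in~\eqref{restdata}.

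Finally, if~\eqref{eq:constantflux} holds, then $\partial_t\Phi\equiv 0$ allows me to choose the lifting $H$ with $\partial_t H$ supported in a compact annulus, eliminating the monopole source in the corrector for $\projcompl\tilde\upres$. The pressure then inherits the dipole decay $\nu^{-2}$, and carrying this improvement through the Duhamel formula for $\projcompl\tilde\uvel$, combined with the assumed $\nu^{-(3+\delta)}$ decay of $\projcompl f$, upgrades the purely periodic velocity bounds to $\norm{\projcompl\uvel}_{\infty,\nu^3}+\norm{\nabla\projcompl\uvel}_{\infty,\nu^{3+\delta}}$, establishing~\eqref{eq:pres.constantflux}.
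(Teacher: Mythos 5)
Your overall strategy---a Banach fixed-point argument on a space that couples $\LS{p}$--$\LS{q}$ maximal regularity with the anisotropic pointwise weights, driven by convolution estimates against the time-periodic Oseen fundamental solution and its steady/periodic decomposition---is essentially the route the paper takes. In particular the splitting into $\proj$ and $\projcompl$ buckets, the weight arithmetic for the quadratic terms (e.g.\ $\nu^{-1}_{-1}\nu^{-3/2}_{-3/2}\leq\nu^{-5/2}_{-1}$, cross terms landing in $\nu^{-(3+\delta)}$), and the observation that the time-derivative of the flux $\Phi$ governs whether the purely periodic pressure has monopole ($\nu^{-1}$) or dipole ($\nu^{-2}$) decay, all match Lemma~\ref{lem:nonl.est}, Lemma~\ref{lem:conv.bdry}, Lemma~\ref{lem:fsolss.conv}, Lemma~\ref{lem:fsolpp.conv}, and Theorem~\ref{thm:linear.existence+decay}.

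The one genuine difference is the handling of the boundary data. You lift $h$ to a divergence-free (or nearly so) extension $H$ and reduce to a zero-Dirichlet problem; the paper instead keeps $h$ in the linear problem, obtains maximal regularity directly from~\cite[Theorem~4.7]{EiterShibata_FlowPastBodyOscBdry}, and reads off the pointwise decay from the representation formulas \eqref{eq:repr.vel.lin.s}--\eqref{eq:repr.pres.lin.p}, where the purely periodic pressure carries an explicit layer potential $(\delta_\torus\otimes E)\ast_G[(\partial_t h\cdot n)\delta_\Sigma]$ whose decay is $\snorm{x}^{-1}$ in general and $\snorm{x}^{-2}$ when the monopole moment $\int_\Sigma\partial_t h\cdot n\,\dS$ vanishes. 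Your lift-based version of this mechanism is conceptually the same but technically muddier: the description of $H$ in the non-constant-flux case as a ``time-independent flux carrier plus a small non-solenoidal correction'' is not what is actually needed (the natural flux carrier $\Phi(t)\sigma$ is solenoidal for each $t$; the obstruction is that $\partial_t H$ then contains $\Phi'(t)\sigma\sim\snorm{x}^{-2}$, which is too slow for the $\nu^{3+\delta}$-weighted class), and the assertion that $(\delta_\torus\otimes P)\ast\projcompl F\equiv 0$ is incorrect, since $P\ast g=-E\ast(\nabla\cdot g)$ and $\nabla\cdot\projcompl F$ does not vanish (neither $f_\perp$ nor $\calN_\perp$ is divergence-free). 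Fortunately this second error is not load-bearing: the volume contribution $Q\ast_G[\projcompl F\chi_\Omega]$ decays like $\snorm{x}^{-2}$ anyway, so the monopole term is still the boundary layer potential, and your conclusion about the decay dichotomy stands. Tightening these two points would bring your argument into full alignment with the paper's proof; the remaining steps you sketch (maximal regularity, convolution lemmas, contraction on the $\varepsilon$-ball) are all as in the paper.
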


\begin{rem}
\label{fluxcte}
Condition~\eqref{eq:constantflux} means that
the total boundary flux 
\begin{equation}\label{eq:flux}
\Phi(t)\coloneqq \int_\Sigma h(t,x)\cdot n\,\dS(x)
\end{equation}
is constant in time,
that is, 
$\ddt \Phi \equiv 0$.
If this is satisfied,
then the decay rate of the pressure is $\snorm{x}^{-2}$,
while for non-constant total flux,
the pressure only decays like $\snorm{x}^{-1}$
as $\snorm{x}\to\infty$.
Similarly, the decay of the purely periodic part of the velocity field is faster in this case.
This observation is in accordance with~\cite{EiterSilvestreReprForm},
where the decay rates for time-periodic weak solutions to~\eqref{eq:nstp} were derived. 
\end{rem}

A similar existence result was obtained 
in~\cite{EiterShibata_FlowPastBodyOscBdry},
but with a different spatial decay rate of the solutions.
Since the decay assumptions on external forces considered in~\cite[Theorem 4.2]{EiterShibata_FlowPastBodyOscBdry}
are weaker,
the decay rates of the derived solutions are slower as well.
In contrast, for solutions 
established in Theorem~\ref{thm:strongsol}
the velocity field $\uvel$ has the same decay as the time-periodic fundamental solution,
namely the anisotropic decay determined by the steady-state part.
Moreover, the purely periodic velocity field $\projcompl\uvel$
decays faster than the steady-state part $\proj\uvel$,
and the decay rate is improved if~\eqref{eq:constantflux} is satisfied,
that is, for constant total boundary flux.
For $\delta=1$, this pointwise behavior coincides with the decay observed for 
weak solutions when $f$ has compact support,
see also~\cite{EiterSilvestreReprForm},
and can thus be considered the optimal decay rate.

Firstly, we will study a linearized version of problem \eqref{eq:nstp}, with focus on specific pointwise estimates. Then, a fixed point argument yields the result of Theorem \ref{thm:strongsol}.

\subsection{Linear theory}

To prove Theorem~\ref{thm:strongsol},
we first study the associated linear problem \eqref{eq:oseentp}. For pointwise decay estimates of the velocity field $\uvel=\uvels+\uvelp$
split into steady-state and purely periodic parts, 
we extend the velocity and pressure to zero outside the domain $\Omega$ and employ the representation formulas 
(see \cite{ALSOseenFS2019,EiterSilvestreReprForm})
\begin{align}
&\begin{aligned}
\uvels
& = \fsolss \ast_{\R^3} \nb{  f_0 \chi_\Omega + n \cdot \widetilde{\mathsf T}(\uvels,\upress) \delta_{\Sigma} 
 + (\zeta \cdot n)  h_0 \delta_{\Sigma}}
 \\
& \qquad
 + \fsolss \ast_{\R^3} \nabla  \cdot  \nb{  (n \otimes h_0 )\delta_{\Sigma} } 
-P \ast_{\R^3} \nb{n\cdot h_0\delta_\Sigma},
\end{aligned}
\label{eq:repr.vel.lin.s}
\\
&\begin{aligned}
\uvelp
& = \fsolpp \ast_{G} \nb{  f_\perp \chi_\Omega + n \cdot \widetilde{\mathsf T}(\uvelp,\upresp) \delta_{\Sigma} 
 + (\zeta \cdot n)  h_\perp \delta_{\Sigma}} 
 \\
& \qquad
 + \fsolpp \ast_G \nabla \cdot \nb{  (n \otimes h_\perp )\delta_{\Sigma} } 
-Q\ast_G \nb{n\cdot  h_\perp\delta_\Sigma},
\end{aligned}
\label{eq:repr.vel.lin.p}
\end{align}
where 
$\widetilde{\mathsf T}(v,q)
=\nabla v - q \mathsf I$
denotes the Cauchy pseudo-stress tensor for the velocity-pressure pair $(v,q)$. The corresponding formulas for the pressure are given by
\begin{align}
&\begin{aligned}
\upress
&= c_0 + P \ast_{\R^3} \nb{  f_0 \chi_\Omega + n \cdot \widetilde{\mathsf T}(\uvels,\upress) \delta_{\Sigma} 
 + (\zeta \cdot n)  h_0 \delta_{\Sigma}}
  \\
& \qquad
 + P \ast_{\R^3}\nabla \cdot \nb{ (n \otimes h_0 )\delta_{\Sigma}  }
 \\
& \qquad + P \ast_{\R^3} \nb{\zeta (h_0\cdot n) \delta_\Sigma},
\end{aligned}
\label{eq:repr.pres.lin.s}
\\
&\begin{aligned}
\upresp
&= c_\perp + Q \ast_G \nb{ f_\perp \chi_\Omega + n \cdot \widetilde{\mathsf T}(\uvelp,\upresp) \delta_{\Sigma} 
 + (\zeta \cdot n)  h_\perp )\delta_{\Sigma}}
  \\
& \qquad
 + Q \ast_G \nabla \cdot \nb{(n \otimes h_\perp )\delta_{\Sigma} } 
 \\
& \qquad + Q \ast_G \nb{\zeta (h_\perp\cdot n) \delta_\Sigma}
+ (\delta_\torus \otimes E ) \ast_G \nb{(\partial_t h \cdot n)\delta_\Sigma} 
\end{aligned}
\label{eq:repr.pres.lin.p}
\end{align}
where $c(t)=c_0+c_\perp(t)$ is a function only depending on $t$. 

We next prepare several estimates of the convolutions appearing in \eqref{eq:repr.vel.lin.s}--\eqref{eq:repr.pres.lin.p}. We define the Euclidean ball of radius $R>0$ by $B_R=\{ x \in {\mathbb R}^3 : |x| <R \}$,  along with the exterior domain $B^R=\{ x \in {\mathbb R}^3 : |x| > R \}$, and the spherical shell $B_{R_1,R_2} =\{ x \in {\mathbb R}^3 : R_1 < |x| < R_2 \}$.

Firstly, we consider the terms with contributions at the boundary.

\begin{lem}\label{lem:conv.bdry}
Let $S>0$ such that
$\Sigma\subset B_{S}$.
Let $\zeta\in\R^3$ such that $0<\snorm{\zeta}\leq\zeta_0$ 
for some $\zeta_0>0$.
Then there is $C=C(\Sigma,S,\zeta_0,\per)>0$ such that
for all 
$\psi=\psi_1\delta_\Sigma$
with
$\psi_1\in\LS{1}(\torus\times\Sigma)$,
and for $\snorm{x}\geq S$
it holds
\begin{equation}
\begin{aligned}
&\nu^1_1(x;\zeta)\, \snorml{\snorm{\fsolss\otimes 1_\torus}\ast \psi(t,x)}
+ \nu^{3/2}_{3/2}(x;\zeta)\, \snorml{\snorm{\nabla\fsolss\otimes 1_\torus}\ast \psi(t,x)}
\\
&\qquad
+ \nu^{2}_{2}(x;\zeta)\,
\snorml{\snorm{\nabla^2\fsolss\otimes 1_\torus}\ast \psi(t,x)}
\\
& \qquad 
+ \snorm{x}^3\, \snorml{\snorm{\fsolpp}\ast \psi(t,x)}
+ \snorm{x}^4\, \snorml{\snorm{\nabla\fsolpp}\ast \psi(t,x)}
+ \snorm{x}^5\, \snorml{\snorm{\nabla^2\fsolpp}\ast \psi(t,x)}
\\
&\qquad
+ |x|\, \snorml{ (E\otimes \delta_\torus)\ast \psi(t,x)}
+ |x|^2\, \snorml{\snorm{Q}\ast \psi(t,x)}
+ |x|^3\, \snorml{\snorm{\nabla Q}\ast \psi(t,x)}
\\
&\qquad
\leq C \norm{\psi_1}_{\LS{1}(\torus\times\Omega_{R})}.
\end{aligned}
\label{eq:conv.bdry1}
\end{equation} 
Moreover, if $\int_\Sigma \psi(t,x)\,\dS(x)=0$,
then 
\begin{equation}
\snorm{x}^2 \snorml{ (E\otimes \delta_\torus)\ast \psi(t,x)}
+ |x|^3\, \snorml{\snorm{Q}\ast \psi(t,x)}
+ |x|^4\, \snorml{\snorm{\nabla Q}\ast \psi(t,x)}
\leq C\norm{\psi_1}_{\LS{1}(\torus\times\Sigma)}.
\label{eq:conv.bdry2}
\end{equation}
\end{lem}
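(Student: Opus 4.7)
The plan is to evaluate each convolution directly from the structure $\psi = \psi_1\delta_\Sigma$, reducing it to a surface integral over $\Sigma$, and then to separate the kernel's pointwise decay (from Proposition~\ref{prop:decay.fundsol}) from an $L^1$-norm of $\psi_1$. Concretely, I would compute
\begin{align*}
((\fsolss \otimes 1_\torus) \ast \psi)(t,x) &= \int_\torus \int_\Sigma \fsolss(x-y)\,\psi_1(s,y)\,\dS(y)\,\ds, \\
((E \otimes \delta_\torus) \ast \psi)(t,x) &= \int_\Sigma E(x-y)\,\psi_1(t,y)\,\dS(y),
\end{align*}
with the full space-time convolution $(\fsolpp \ast \psi)(t,x)$ treated separately below, and analogously for the $Q$ and spatial-derivative counterparts.

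The main geometric input is a weight-comparison lemma. Since $\Sigma$ is compact and $\Sigma \subset B_S$, there exists $S' < S$ with $|y| \leq S'$ for every $y \in \Sigma$. For $|x|\geq S$ and $y\in\Sigma$ this yields $|x-y|\geq \max\{|x|-S',\,S-S'\} \gtrsim |x|$, and $|\wakefct(x-y)-\wakefct(x)| \leq \zeta_0 S'$, which by a case split on whether $\wakefct(x)$ is bounded or large gives $1+\wakefct(x-y) \gtrsim 1+\wakefct(x)$. Consequently $\nu^{-a}_{-b}(x-y;\zeta) \leq C\,\nu^{-a}_{-b}(x;\zeta)$ uniformly in $y\in\Sigma$ and in $\zeta$ with $|\zeta|\leq\zeta_0$. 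Combined with Proposition~\ref{prop:decay.fundsol} and the elementary pointwise bounds $|D^\alpha E(x-y)|,\,|D^\alpha P(x-y)| \lesssim |x|^{-1-|\alpha|}$, this produces the $y$-uniform kernel estimates needed for \eqref{eq:conv.bdry1}.

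For the steady-Oseen terms, the bound follows by pulling the kernel estimate out of the surface integral and using $\int_\torus\int_\Sigma|\psi_1| = \norm{\psi_1}_{L^1(\torus\times\Sigma)}$ up to the normalisation on $\torus$. For $\fsolpp$, Young's inequality on $\torus$ (i.e.\ $L^\infty_t\ast L^1_t\hookrightarrow L^\infty_t$) reduces the task to $\esssup_{y\in\Sigma}\norm{\fsolpp(\cdot,x-y)}_{L^\infty(\torus)} \lesssim |x|^{-3}$, and similarly for its spatial derivatives. Since Proposition~\ref{prop:decay.fundsol} only supplies $L^r_t$-bounds for $r<\infty$, I would upgrade to $L^\infty_t$ via the Sobolev embedding $W^{1,r}(\torus)\hookrightarrow L^\infty(\torus)$ for $r>1$, controlling $\partial_t\fsolpp$ in $L^r_t$ through the PDE $\partial_t\fsolpp = \Delta\fsolpp + \zeta\cdot\nabla\fsolpp$ (valid away from $t=0$, the pressure part $Q$ being concentrated there) together with the spatial-derivative bounds of Proposition~\ref{prop:decay.fundsol}. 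The $(E\otimes\delta_\torus)\ast\psi$ and $Q\ast\psi$ terms become spatial integrals at fixed $t$ and are immediate.

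For the refined estimates \eqref{eq:conv.bdry2} under the cancellation hypothesis $\int_\Sigma\psi_1(t,\cdot)\,\dS = 0$, I would use the standard pivot-subtraction trick:
\[
\int_\Sigma K(x-y)\,\psi_1(t,y)\,\dS(y) = -\int_\Sigma \int_0^1 y\cdot\nabla K(x-\tau y)\,\dtau\,\psi_1(t,y)\,\dS(y),
\]
so that the effective kernel decay improves by exactly one power of $|x|^{-1}$. Applied to $K = E$, $K = P$ and $K = \nabla P$ this yields \eqref{eq:conv.bdry2}. The principal technical obstacle is the $L^\infty_t$-upgrade for $\fsolpp$; the rest is bookkeeping, using the geometric weight comparison to convert ``$y$-at-$\Sigma$'' kernel bounds into ``$x$-at-infinity'' decay.
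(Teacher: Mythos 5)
Your overall strategy coincides with the paper's: the geometric weight comparison $\nu^{-a}_{-b}(x-y;\zeta)\leq C\,\nu^{-a}_{-b}(x;\zeta)$ uniformly in $y\in\Sigma$, pulling the kernel decay out of the $\torus\times\Sigma$-integral, and the pivot-subtraction argument under the cancellation hypothesis. The issue is with the $L^\infty_t$-upgrade for $\fsolpp$, where your proposal contains a genuine gap.

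The embedding $W^{1,r}(\torus)\hookrightarrow L^\infty(\torus)$ cannot be applied to $\fsolpp(\cdot,z)$ because $\fsolpp(\cdot,z)\notin W^{1,r}(\torus)$ for $z\neq 0$. The culprit is precisely the pressure contribution you acknowledge but discard. For $x\neq 0$ the purely periodic velocity satisfies
\begin{equation*}
\partial_t\fsolpp-\Delta\fsolpp-\zeta\cdot\nabla\fsolpp
= -\nabla\bp{\projcompl Q}
= -(\delta_\torus-1_\torus)\otimes\nabla P(x),
\end{equation*}
and integrating in $t$ across $t=0$ shows that $\fsolpp(\cdot,x)$ has a jump of size $-\nabla P(x)\sim |x|^{-3}$ at $t=0$. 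Hence $\partial_t\fsolpp(\cdot,x)$ is a measure carrying a Dirac mass at $t=0$, not an $L^r(\torus)$ function, and Sobolev embedding on the torus is unavailable. Relatedly, the equation you write, $\partial_t\fsolpp=\Delta\fsolpp+\zeta\cdot\nabla\fsolpp$, is missing the constant-in-time term: for $t\neq 0$, $x\neq 0$ the correct equation is $\partial_t\fsolpp=\Delta\fsolpp+\zeta\cdot\nabla\fsolpp+\nabla P(x)$, since the term arising in the equation for $\fsolpp$ is $-\nabla(\projcompl Q)$, not $-\nabla Q$; the omitted term is of order $|x|^{-3}$ and so would not worsen the rate, but it needs to appear.

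The repair is to apply the one-dimensional Sobolev embedding on the open interval $(0,\per)$ rather than on $\torus$: there $\fsolpp(\cdot,z)$ is smooth with $\partial_t\fsolpp(\cdot,z)\in L^r((0,\per))$ bounded by the spatial-derivative estimates together with $\snorm{\nabla P(z)}$, and since $L^\infty(\torus)=L^\infty((0,\per))$ this yields the desired $\esssup_{t}\snorm{\fsolpp(t,z)}\lesssim\snorm{z}^{-3}$. Alternatively, and this is what the paper's terse proof implicitly invokes, the reference \cite[Theorem~1.1]{EiterKyedEstTPFundSol2018} already provides the \emph{pointwise} (i.e.\ uniform-in-$t$) bounds $\snorm{D^\alpha_x\fsolpp(t,x)}\leq C\snorm{x}^{-3-\snorm{\alpha}}$; Proposition~\ref{prop:decay.fundsol} as stated only records the $L^r_t$ variant for $r<\infty$, but the cited source supplies the $L^\infty_t$ bound directly and no separate upgrade step is needed. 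With either of these corrections your argument closes; the remainder of the proposal is correct and matches the paper.
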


\begin{proof}
Let $R\in(0,S)$ such that $\Sigma\subset B_R$.
For $\snorm{x}\geq S>R\geq \snorm{y}$ we have
\[
\begin{aligned}
\snorm{x-y}
&\geq \snorm{x}-\snorm{y}
\geq (1-R/S)\snorm{x}
\geq S-R,
\\
(1+2\zeta_0 R)\np{1+\wakefct(x-y)}
&\geq 1+2\snorm{\zeta}\snorm{y}+\wakefct(x-y)
\geq 1+\wakefct(x).
\end{aligned}
\]
This yields 
$\nu^\alpha_\beta(x;\zeta)\leq C\,\nu^\alpha_\beta(x-y;\zeta)$
for $\alpha,\beta\geq 0$ and a constant $C=C(\alpha,\beta,R,S,\zeta_0)>0$.
Therefore, 
for any function $\Theta$ with $\snorm{\Theta(t,z)}\leq C\nu^{-\alpha}_{-\beta}(z;\zeta)$ for $\snorm{z}\geq S-R$,
we obtain
\[
\begin{aligned}
\snorml{\Theta \ast \psi(t,x)}
&\leq C\int_\torus\int_{\Sigma} \nu^{-\alpha}_{-\beta}(x-y;\zeta)\,\snorm{\psi_1(s,y)}\,\dS(y)\ds
\\
&\leq C 
\nu^{-\alpha}_{-\beta}(x;\zeta)\norm{\psi_1}_{\LS{1}(\torus\times\Sigma)}.
\end{aligned}
\]
In this proof and the ones that follow, $C$ represents a generic positive constant that may take different values in different steps of the argument. Moreover, if $\snorm{\nabla \Theta (t,z)}\leq C\nu^{-\alpha}_{-\beta}(z;\zeta)$
and $\int_\Sigma \psi(t,y)\,\dS(y)=0$,
then we obtain
\[
\begin{aligned}
\snorml{\Theta \ast \psi(t,x)}
&=\snormL{\int_\torus\int_\Sigma 
\bp{\Theta(t-s,x-y)-\Theta(t-s,x)}\psi_1(s,y)\,\dS(y)\ds}
\\
&=\snormL{\int_\torus\int_\Sigma 
\int_0^1
y\cdot \nabla \Theta(t-s,x-\theta y) \psi_1(s,y)\,\dtheta\dS(y)\ds}
\\
&\leq C R\int_\torus\int_{\Sigma} \nu^{-\alpha}_{-\beta}(x-y;\zeta)\,\snorm{\psi_1(s,y)}\,\dS(y)\ds
\\
&\leq C
\nu^{-\alpha}_{-\beta}(x;\zeta)\norm{\psi_1}_{\LS{1}(\torus\times\Sigma)}.
\end{aligned}
\]
Due to the estimates~\eqref{est:fsolss.decay},~\eqref{est:fsolpp.decay} 
and the decay properties of $E$, $Q$ and $\nabla Q$,
the claim follows from this general result.
\end{proof}

We now consider convolutions of the fundamental solution 
with
functions with suitable spatial decay. 
Since we assume different decay estimates 
of the steady-state and the purely periodic part,
we study them separately.
For the steady-state part, we have the following result.

\begin{lem}
\label{lem:fsolss.conv}
There is $C>0$ such that for all $\zeta\in\R^3$ with $0<\snorm{\zeta}\leq\zeta_0$
for some $\zeta_0>0$,
for all
$g\in\LS{6/5}(\R^3)$ with $\nu^{5/2}_1(\cdot;\zeta)\,g\in\LS{\infty}(\R^3)$, 
and for all $x\in\R^3\setminus\set{0}$
it holds
\[
\begin{aligned}
\nu^1_1(x;\zeta)\, 
\snorml{\snorm{\fsolss}\ast g(x)}
&+ \nu^{3/2}_{3/2}(x;\zeta)\, \snorml{\snorm{\nabla\fsolss}\ast g(x)}
\\
&\quad
+\snorm{x}^2\, \snorml{\snorm{\fsolpres}\ast g(x)}
\leq C \norm{\nu^{5/2}_1(\cdot;\zeta)\, g}_{\LS{\infty}(\R^3)}.
\end{aligned}
\]
\end{lem}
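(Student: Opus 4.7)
Setting $K:=\norm{\nu^{5/2}_1(\cdot;\zeta)\,g}_{\LS{\infty}(\R^3)}$, the hypothesis yields the pointwise bound $\snorm{g(y)}\leq K\,\nu^{-5/2}_{-1}(y;\zeta)$. Combining this with the anisotropic decay estimate~\eqref{est:fsolss.decay} in Proposition~\ref{prop:decay.fundsol} for $\snorm{\alpha}=0,1$ and with the elementary bound $\snorm{\fsolpres(x)}=\snorm{P(x)}\leq C\snorm{x}^{-2}$ (recall $P=-\nabla E$), the three claimed bounds reduce to proving the three weighted convolution inequalities
\begin{align*}
\int_{\R^3}\nu^{-1}_{-1}(x-y;\zeta)\,\nu^{-5/2}_{-1}(y;\zeta)\,\dy &\leq C\,\nu^{-1}_{-1}(x;\zeta), \\
\int_{\R^3}\nu^{-3/2}_{-3/2}(x-y;\zeta)\,\nu^{-5/2}_{-1}(y;\zeta)\,\dy &\leq C\,\nu^{-3/2}_{-3/2}(x;\zeta), \\
\int_{\R^3}\snorm{x-y}^{-2}\,\nu^{-5/2}_{-1}(y;\zeta)\,\dy &\leq C\,\snorm{x}^{-2},
\end{align*}
uniformly in $0<\snorm{\zeta}\leq\zeta_0$. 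The auxiliary hypothesis $g\in\LS{6/5}(\R^3)$ enters only to ensure absolute convergence of the convolutions near the singularities of $\fsolss$ and $\nabla\fsolss$ at the origin, and does not appear quantitatively on the right-hand sides.

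\textbf{Main step.} Each of the three integrals above has the anisotropic Oseen-convolution form
$\int\nu^{-a_1}_{-b_1}(x-y;\zeta)\,\nu^{-a_2}_{-b_2}(y;\zeta)\,\dy$
with exponents $(a_1,b_1;a_2,b_2)$ equal to $(1,1;\tfrac52,1)$, $(\tfrac32,\tfrac32;\tfrac52,1)$, and $(2,0;\tfrac52,1)$, respectively. Such weighted convolutions are controlled by Farwig's convolution lemma for Oseen weights (see~\cite[Lemma 2.3]{Farwig_statextOseenNSE_92} and~\cite[Lemma 3.1]{DeuringKracmarExtStatNSFApprBddDom2004}), which, under admissibility of the exponents, yields an upper bound of the form $C\,\nu^{-a}_{-b}(x;\zeta)$. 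A direct check of the exponent arithmetic shows that all three configurations are admissible and produce output weights exactly equal to the targets $(1,1)$, $(\tfrac32,\tfrac32)$, and $(2,0)$. Uniformity in $\zeta$ follows from the corresponding uniformity in Proposition~\ref{prop:decay.fundsol} together with the homogeneity of the integrals under the rescaling $y\mapsto\snorm{\zeta}y$.

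\textbf{Main obstacle.} A self-contained verification proceeds via the three-zone split $\R^3=A_1\cup A_2\cup A_3$ with $A_1=\setcl{y}{\snorm{x-y}\leq\snorm{x}/2}$, $A_2=\setcl{y}{\snorm{y}\leq\snorm{x}/2}$, and $A_3$ the complement, where in each zone one of the two weights is frozen at $x$. The delicate point is that while $\snorm{y}\sim\snorm{x}$ on $A_1$ and $\snorm{x-y}\sim\snorm{x}$ on $A_2$ by the triangle inequality, the analogous comparability for the anisotropic factor $1+\wakefct(\cdot)$ may fail when $x$ lies deep inside the wake $\setcl{x}{\wakefct(x)\ll\snorm{\zeta}\snorm{x}}$, since $\wakefct(x-y)$ can be considerably larger than $\wakefct(x)$. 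The resolution is to perform the zonal integration in cylindrical coordinates adapted to the axis $\hat\zeta$, exploiting that $1+\wakefct(x)$ behaves like $\snorm{\zeta}\snorm{x}$ outside a paraboloidal neighborhood of $\R_-\hat\zeta$ and like the squared distance to that axis inside it; the surface-integral estimate~\eqref{far} and one-dimensional computations on $\R_+$ then dispatch the residual integrals. The exponent $(\tfrac52,1)$ of the bound on $g$ is precisely the one that makes all three output weights come out as stated.
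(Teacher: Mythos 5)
The paper's proof of Lemma~\ref{lem:fsolss.conv} is a one-line citation: the statement is exactly~\cite[Theorem~4.7]{DeuringKracmarExtStatNSFApprBddDom2004}. Your reduction to the three weighted-convolution inequalities is precisely the content of that cited theorem, so your blind proposal is essentially an attempt to \emph{re-derive} the black box the paper invokes. This is a legitimate and more self-contained route, and your reduction (pointwise bound $\snorm{g(y)}\leq K\nu^{-5/2}_{-1}(y;\zeta)$ combined with the kernel decay~\eqref{est:fsolss.decay} and $\snorm{P(x)}\sim\snorm{x}^{-2}$) correctly isolates what must be proved. You also correctly identify the crux: on the zone $\snorm{y}\leq\snorm{x}/2$ one may freeze $\snorm{x-y}^{-a}\sim\snorm{x}^{-a}$ but the anisotropic factor $(1+\wakefct(x-y))^{-1}$ is \emph{not} comparable to $(1+\wakefct(x))^{-1}$, and the delicacy lies entirely in the wake geometry.

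Where the proposal falls short is in the ``Main step'' and the resolution of that obstacle. You cite \cite[Lemma~2.3]{Farwig_statextOseenNSE_92} and \cite[Lemma~3.1]{DeuringKracmarExtStatNSFApprBddDom2004} as a ``convolution lemma for Oseen weights,'' but the statement those references supply (and which the paper quotes as~\eqref{far}) is only the surface-integral bound ${\mathcal J}_R(a,b)\leq C R^{2-a-\min\{1,b\}}$ for $b\neq 1$; that is an ingredient, not the convolution estimate itself. The actual anisotropic convolution lemma in Farwig's and Deuring--Kr\v{a}cmar's work has a nontrivial proof that you only gesture at (``cylindrical coordinates adapted to $\hat\zeta$\dots paraboloidal neighborhood\dots''). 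In particular your sketch does not show how the three specific output weights $\nu^{-1}_{-1}$, $\nu^{-3/2}_{-3/2}$, $\snorm{x}^{-2}$ actually emerge from the $(\tfrac52,1)$ bound on $g$ once the wake non-comparability is accounted for; note, for instance, that if one drops the $(1+\wakefct)^{-1}$ factors entirely the isotropic scaling of $\int\snorm{x-y}^{-1}\snorm{y}^{-5/2}\dy$ gives only $\snorm{x}^{-1/2}$, so the claimed $\nu^{-1}_{-1}$ decay relies essentially on the anisotropy and cannot be shrugged off to a one-line scaling argument. Finally, the remark that $g\in L^{6/5}$ ``enters only to ensure absolute convergence near the singularities'' is not accurate: the pointwise bound $\snorm{g(y)}\leq K\nu^{-5/2}_{-1}(y;\zeta)$ already yields absolute convergence of all three convolutions for $x\neq 0$; the $L^{6/5}$ hypothesis is a separate structural assumption (it is part of the hypotheses of \cite[Theorem~4.7]{DeuringKracmarExtStatNSFApprBddDom2004}) and is not what controls the kernel singularity here. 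In short, the strategy is the right one and matches what sits inside the reference the paper cites, but as written it contains an attribution error and the central anisotropic convolution estimate remains unproved.
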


\begin{proof}
This follows from~\cite[Theorem 4.7]{DeuringKracmarExtStatNSFApprBddDom2004}.
\end{proof}

For the purely periodic part, we have the following estimates.

\begin{lem}
\label{lem:fsolpp.conv}
Let $\varepsilon>0$, $r\in[1,\infty)$ and $ \mu >  3$. 
Then there is $C>0$ such that for all $\zeta\in\R^3\setminus\set{0}$ and
$g\in\LSloc{1}(\torus\times\R^3)$ with
$\np{1+\nu^\mu} g \in\LS{r}(\torus;\LS{\infty}(\R^3)^3)$, 
and for all $x\in\R^3$ with $\snorm{x}\geq\varepsilon$
it holds
\[
\begin{aligned}
&\snorm{x}^{3}\,
\snorml{\snorm{\fsolpp}\ast g(t,x)}
+ \snorm{x}^{\min\set{\mu,4}}\, \snorml{\snorm{\nabla\fsolpp}\ast g(t,x)}
\\
&\qquad\qquad 
+ \snorm{x}^2\, \snorml{\snorm{Q}\ast g(t,x)}
\leq C \norm{(1+\nu^\mu)g}_{\LS{r}(\torus;\LS{\infty}(\R^3))}.
\end{aligned}
\]
\end{lem}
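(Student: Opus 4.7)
The plan is to bound the three convolutions pointwise by exploiting the weight decomposition $|g(s, y)| \leq \phi(s)(1+|y|^\mu)^{-1}$ with $\phi(s) := \norm{(1+|\cdot|^\mu)g(s, \cdot)}_{L^\infty(\R^3)}$, so that the quantity on the right-hand side of the claim is precisely $\norm{\phi}_{L^r(\torus)}$. For the convolutions involving $\fsolpp$ and $\nabla\fsolpp$, H\"older's inequality in the time variable with conjugate exponents $r, r'$ reduces matters to estimating the spatial integrals
\[
\int_{\R^3} \norm{\fsolpp(\cdot, z)}_{L^{r'}(\torus)} (1+|x-z|^\mu)^{-1}\, dz
\]
and its analogue for $\nabla\fsolpp$. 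Since $Q = \delta_\torus \otimes P$, the time convolution with $Q$ collapses, and the $Q$-term reduces to the purely spatial integral $\int_{\R^3} P(x-y)g(t,y)\,dy$.

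Next, I would decompose $\R^3 = A_1 \cup A_2 \cup A_3$ with $A_1 = \setc{z}{|z|\leq |x|/2}$, $A_2 = \setc{z}{|x-z|\leq |x|/2}$, and $A_3$ the remainder. On $A_2 \cup A_3$ one has $|z|\geq |x|/2$, so the decays of Proposition~\ref{prop:decay.fundsol}, namely $\norm{\fsolpp(\cdot, z)}_{L^{r'}(\torus)}\leq C|z|^{-3}$ and $\norm{\nabla\fsolpp(\cdot, z)}_{L^{r'}(\torus)}\leq C|z|^{-4}$ for $|z|\geq\varepsilon$, produce the decay factors $C|x|^{-3}$ and $C|x|^{-4}$ respectively, while the residual weight integrates over $\R^3$ to a finite constant because $\mu>3$. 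On $A_1$ the weight satisfies $(1+|x-z|^\mu)^{-1}\leq C|x|^{-\mu}$, and I would bound $\int_{A_1}\norm{\fsolpp(\cdot, z)}_{L^{r'}}\,dz$ by splitting off the singular ball $\{|z|\leq\varepsilon\}$ (using the global integrability~\eqref{eq:fsolpp.intfct},~\eqref{eq:fsolpp.intgrad} together with the embedding $L^q(\torus)\hookrightarrow L^{r'}(\torus)$) from the annulus $\{\varepsilon<|z|\leq|x|/2\}$, where the pointwise decay applies. For $\fsolpp$ the annular integral grows like $\log|x|$, so the $A_1$ contribution is $O(|x|^{-\mu}\log|x|)$, absorbed into $|x|^{-3}$ since $\mu>3$; for $\nabla\fsolpp$ the annular integral of $|z|^{-4}$ remains bounded, so the $A_1$ contribution is $O(|x|^{-\mu})$, and the combined decay is $|x|^{-\min\{\mu, 4\}}$. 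The $Q$-term is handled by the same decomposition applied to $\int_{\R^3}|z|^{-2}(1+|x-z|^\mu)^{-1}\,dz$; each of the three regions contributes $O(|x|^{-2})$ (using $\mu\geq 3$ on $A_1$ to dominate $\int_{A_1}|z|^{-2}\,dz = O(|x|)$ by $|x|^{1-\mu}$), yielding $|Q|\ast g(t,x)\leq C|x|^{-2}\phi(t)$.

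The main technical obstacle lies in controlling the singularity of $\fsolpp$ and $\nabla\fsolpp$ near $z=0$, where the pointwise decay from Proposition~\ref{prop:decay.fundsol} is not available and one must instead rely on the global $L^q$ integrability combined with H\"older on a small ball. A secondary subtlety concerns the endpoint $r=1$, for which $r'=\infty$ falls outside the explicit range of Proposition~\ref{prop:decay.fundsol}; the required $L^\infty$-in-time decay at $|z|\geq\varepsilon$ can nonetheless be extracted from the cited sources. Uniformity of the final constant for $\zeta$ with $|\zeta|\leq\zeta_0$ then follows from the corresponding uniformity statements in Proposition~\ref{prop:decay.fundsol} and in~\eqref{eq:fsolpp.intfct},~\eqref{eq:fsolpp.intgrad}.
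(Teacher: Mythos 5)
Your proposal follows the same overall strategy as the paper: after the time H\"older, you are left with the spatial integrals $\int_{\R^3}\norm{\fsolpp(\cdot,z)}_{L^{r'}(\torus)}(1+|x-z|^\mu)^{-1}\,dz$ (and the analogue for $\nabla\fsolpp$), which you treat by a three-region decomposition, pointwise decay of $\fsolpp$ away from its spatial singularity (Proposition~\ref{prop:decay.fundsol}), and global integrability near it. The geometric set-up, however, differs: the paper splits in the $g$-variable $y$, taking $A_1=B_{|x|/2}$, $A_2=B^{2|x|}$, $A_3=B_{|x|/2,2|x|}$, so the singularity of $\fsolpp$ lands in the annulus $A_3$ and is absorbed by a space--time H\"older with an auxiliary exponent $\tilde r\in(1,5/3)$; you split in the $\fsolpp$-variable $z=x-y$, placing that singularity in $A_1=\{|z|\le|x|/2\}$ and then sub-splitting off the fixed ball $\{|z|\le\varepsilon\}$. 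Both routes produce the same decay rates, and your reorganisation is arguably the more transparent one (it cleanly separates the two singularities). Your observation that the $Q$-term collapses the time convolution and so needs no H\"older is also correct, and indeed slightly sharper than the paper's terse remark.

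One point deserves more care. To handle $\int_{\{|z|\le\varepsilon\}}\norm{\fsolpp(\cdot,z)}_{L^{r'}(\torus)}\,dz$ you invoke \eqref{eq:fsolpp.intfct}/\eqref{eq:fsolpp.intgrad} ``together with the embedding $L^q(\torus)\hookrightarrow L^{r'}(\torus)$''. Since the $L^r(\torus)$-norms in this paper are normalised, the torus is a probability space and that embedding holds only for $q\ge r'$; but \eqref{eq:fsolpp.intfct} restricts $q<5/3$, so your argument closes only when $r'<5/3$, i.e.\ $r>5/2$. For $r\le 5/2$ the embedding goes the wrong way and the $L^q(\torus\times\R^3)$ integrability alone does not yield spatial integrability of $\norm{\fsolpp(\cdot,z)}_{L^{r'}(\torus)}$ near $z=0$; one needs a finer local estimate of the singularity of $\fsolpp$ (in the spirit of $\norm{\fsolpp(\cdot,z)}_{L^{r'}(\torus)}\lesssim|z|^{-3+2/r'}$, which is integrable exactly for $r'<\infty$, i.e.\ $r>1$). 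The paper's own $I_3$-step (H\"older with $\tilde r<r'$, $\tilde r<5/3$) is subject to the same caveat, and in the application $r=p>1$, so this does not affect the downstream results; but you should not attribute the needed input to the stated embedding, which is the one piece of your sketch that, as written, does not give what you want.
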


\begin{proof}
Set $M\coloneqq \norm{(1+\nu^\mu)g}_{\LS{r}(\torus;\LS{\infty}(\R^3))}$.
We start with the estimate of $\snorm{\fsolpp}\ast g$.
We use H\"older's inequality on $\torus$ and 
Minkowski's integral inequality
and
split the spatial integral into three parts 
to obtain
\[
\begin{aligned}
\snorml{\snorm{\fsolpp}\ast g(t,x)}
&\leq\int_{\R^3} \Bp{\int_\torus\snorml{\fsolpp(t-s,x-y)}^{r'}\,\ds}^{1/r'}
\Bp{\int_\torus\snorml{g(t,y)}^r\,\ds}^{1/r}\dy
\\
&\leq C M \sum_{j=1}^3 \int_{A_j} \Bp{\int_\torus\snorml{\fsolpp(s,x-y)}^{r'}\,\ds}^{1/r'}
\bp{1+\snorm{y}^{\mu} }^{-1}  \,\dy
\eqqcolon C M \sum_{j=1}^3 I_j
\end{aligned}
\]
where $r'=r/(r-1)$,
and we set $A_1=B_R$, $A_2=B^{4R}$ and $A_3=B_{R,4R}$ with $R=\snorm{x}/2$.
First, since $\snorm{y}\leq R$ implies $\snorm{x-y}\geq \snorm{x}/2$,
we can use \eqref{est:fsolpp.decay} to obtain
\[
\begin{aligned}
I_1
\leq C\int_{B_R} \snorm{x-y}^{-3}\np{1+\snorm{y}^{\mu}}^{-1}\,\dy 
&\leq C\snorm{x}^{-3}\int_{B_R} \np{1+\snorm{y}}^{-\mu}\,\dy 
\leq  C\snorm{x}^{-3}
\end{aligned}
\]
since $\mu>3$.
For the second integral,
we again use \eqref{est:fsolpp.decay} 
and that $\snorm{y}\geq 4R$ implies $\snorm{x-y}\geq \snorm{y}/2$
to obtain
\[
\begin{aligned}
I_2
&\leq C\int_{B^{4R}}\snorm{x-y}^{-3}\np{1+\snorm{y}}^{-\mu}\,\dy 
\leq C\int_{B^{4R}}\snorm{y}^{-3} \snorm{y}^{-\mu}\,\dy
=C \snorm{x}^{-\mu}.
\end{aligned}
\]
For the third integral, we note that $r'>1$ and  $\mu-3 >0$, so that we can choose $\tilde r\in(1,5/3)$ such that $\tilde r<r'$
and $3/\tilde{r}' <\mu-3$.
Then H\"older's inequality and \eqref{eq:fsolpp.intfct}
yield
\[
\begin{aligned}
I_3
&\leq C \snorm{x}^{-\mu}\Bp{\int_{B_{R,4R}}1 \dy}^{1/\tilde r'}
\Bp{\int_{\torus}\int_{B_{R,4R}} \snorml{\fsolpp(s,y)}^{\tilde r}\,\dy\ds}^{1/\tilde r}
\\
& = C \snorm{x}^{-\mu} R^{3/\tilde r'}
\Bp{\int_{\torus}\int_{B_{R,4R}} \snorml{\fsolpp(s,y)}^{\tilde r}\,\dy\ds}^{1/\tilde r}
\\
& \leq C\snorm{x}^{-\mu+3/\tilde r'}
\leq C\snorm{x}^{-3}. 
\end{aligned}
\]
Collecting the estimates of $I_1$, $I_2$ and $I_3$,
we arrive at 
\[
\snorml{\snorm{\fsolpp}\ast g(t,x)}
\leq C\snorm{x}^{-3}
\]
as asserted. 
For the estimate of $\snorm{\nabla\fsolpp}\ast g$
we proceed similarly. 
At first, we obtain
\[
\begin{aligned}
\snorml{\snorm{\nabla\fsolpp}\ast g(t,x)}
\leq C M \sum_{j=1}^3 \int_{A_j}\! \Bp{\int_\torus\snorml{\nabla\fsolpp(s,x-y)}^{r'}\,\ds}^{\sfrac{1}{r'}}
\!\!\bp{1+\snorm{y}}^{-\mu} \,\dy
\eqqcolon C M\sum_{j=1}^3 J_j
\end{aligned}
\]
for the sets $A_j$, $j=1,2,3$, as before.
Repeating the above arguments, we can estimate $J_1$ and $J_2$ as
\[
\begin{aligned}
J_1 
\leq C\int_{B_R} \snorm{x-y}^{-4}\np{1+\snorm{y}}^{-\mu}\,\dy 
&\leq C\snorm{x}^{-4}\int_{B_R} \np{1+\snorm{y}}^{-\mu}\,\dy
\leq C\snorm{x}^{-4}, 
\\
J_2
\leq C\int_{B^{4R}}\snorm{x-y}^{-4}\np{1+\snorm{y}}^{-\mu}\,\dy 
&\leq C\int_{B^{4R}}\snorm{y}^{-4} \snorm{y}^{-\mu}\,\dy
=C \snorm{x}^{-1-\mu},
\end{aligned}
\]
and for $J_3$ 
we use $\nabla\fsolpp\in\LS{1}(\torus\times\R^3)$ by~\eqref{eq:fsolpp.intgrad}
to deduce
\[
\begin{aligned}
J_3
&\leq C\np{1+\snorm{x}}^{-\mu}
\int_{\torus}\int_{B_{R,4R}} \snorml{\nabla\fsolpp(s,y)}\,\dy\ds
\leq C\snorm{x}^{-\mu}.
\end{aligned}
\]
In total, these estimates yield
\[
\snorml{\snorm{\nabla\fsolpp}\ast g(t,x)}
\leq C\snorm{x}^{-\min\set{4,\mu}} .
\]
For the convolutions with $Q=\fsolpres\otimes\delta_\torus$,
we use 
$\snorm{\fsolpres(x)}=C\snorm{x}^{-2}$
and argue similarly.
\end{proof}

We now combine the derived pointwise estimates
with the results on time-periodic maximal regularity
established in~\cite{EiterShibata_FlowPastBodyOscBdry}.
This leads to existence of solutions with suitable spatial decay.

\begin{thm}\label{thm:linear.existence+decay}
Let $\Omega\subset\R^3$ be an exterior domain with $\CS{2}$-boundary,
and let  
$\zeta_0>0$ and $p,q\in(1,\infty)$.
Let $h\in \tracesptp$,
and let $f\in\LSloc{1}(\torus\times\Omega)^3$
such that $f=f_0+f_\perp$ satisfies
$\nu^{5/2}_1(\cdot;\zeta)\,f_0\in\LS{\infty}(\Omega)^3$ 
and
$\nu^{3+\delta} f_\perp\in\LS{p}(\torus;\LS{\infty}(\Omega)^3)$
for some $\delta>0$.
For any $\zeta\in\R^3\setminus\set{0}$
there exists a unique solution $(\uvel,\upres)$ to \eqref{eq:oseentp}
satisfying
\begin{equation}
\uvel\in\LS{p}(\torus;\DS{2}{q}(\Omega)^3),
\qquad
\partial_t\uvel\in\LS{p}(\torus;\LS{q}(\Omega)^3),
\qquad
\upres\in\LS{p}(\torus;\DS{1}{q}(\Omega)),
\label{eq:linear.solreg}
\end{equation}
and the estimates
\begin{align}
\begin{split}
\norm{\partial_t\uvel}_{\LS{p}(\torus; \LS{q}(\Omega))}
&+ \norm{\nabla^2\uvel}_{\LS{p}(\torus; \LS{q}(\Omega))}
+ \norm{\nabla\upres}_{\LS{p}(\torus; \LS{q}(\Omega))}
\\
&\qquad\qquad
\leq C\bp{\norm{f}_{\LS{p}(\torus; \LS{q}(\Omega))}
+\norm{h}_{\tracesptp}},
\end{split}
\label{eq:linest.maxreg}
\\
\begin{split}
\nu^1_1(x;\zeta)\, 
\snorml{\uvels(x)}
&+ \nu^{3/2}_{3/2}(x;\zeta)\, \snorml{\nabla\uvels(x)}
+\snorm{x}^2\, \snorml{\upress(x)}
\\
&\qquad\qquad
\leq C \bp{\norm{\nu^{5/2}_1(\cdot;\zeta)\, f_0}_{\LS{\infty}(\Omega)}
+\norm{h_0}_{\tracespss}},
\end{split}
\label{eq:linest.pwss}
\\
\begin{split}
\snorm{x}^{2}
\snorml{\uvelp(t,x)}
&+ \snorm{x}^{3} \snorml{\nabla\uvelp(t,x)}
+ \snorm{x}\, \snorml{\upresp(t,x)}
\\
&\qquad\qquad
\leq C \bp{
\norm{\nu^{3+\delta} f_\perp}_{\LS{p}(\torus;\LS{\infty}(\Omega))}
+\norm{h_\perp}_{\tracesptp}}.
\end{split}
\label{eq:linest.pwpp}
\end{align}
If the total flux through $\Sigma$ is constant, that is, if \eqref{eq:constantflux} holds,
then \eqref{eq:linest.pwpp}
can be replaced with
\begin{equation}
\begin{split}
\snorm{x}^{3}
\snorml{\uvelp(t,x)}
&+ \snorm{x}^{\min\set{3+\delta,4}} \snorml{\nabla\uvelp(t,x)}
+ \snorm{x}^2\, \snorml{\upresp(t,x)}
\\
&\qquad\qquad
\leq C \bp{
\norm{\nu^{3+\delta} f_\perp}_{\LS{p}(\torus;\LS{\infty}(\Omega))}
+\norm{h_\perp}_{\tracesptp}}.
\end{split}
\label{eq:linest.pwpp.constflux}
\end{equation}
Here $C=C(\Omega, p,q, \delta,\zeta_0)>0$ if $\snorm{\zeta}\leq\zeta_0$.
\end{thm}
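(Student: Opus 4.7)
The existence, uniqueness, and maximal regularity bound \eqref{eq:linest.maxreg} would follow by invoking the time-periodic $\LS{p}$-$\LS{q}$ maximal regularity theorem for the Oseen system in exterior domains proved in \cite{EiterShibata_FlowPastBodyOscBdry}. The input hypothesis $f\in\LS{p}(\torus;\LS{q}(\Omega))$ needed for that theorem is a direct consequence of the pointwise decay assumptions on $f_0$ and $f_\perp$ combined with local integrability of $f$ near the compact boundary $\Sigma$; the relevant integrals at infinity, computed in polar coordinates against the weights $\nu^{-5/2}_{-1}$ and $\nu^{-3-\delta}$, are finite for any $q\in(1,\infty)$ thanks to the anisotropic and isotropic decay, respectively. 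This produces the unique solution with the regularity \eqref{eq:linear.solreg} and the estimate \eqref{eq:linest.maxreg}.

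With $(\uvel,\upres)$ in hand, I would split $\uvel=\uvels+\uvelp$ and $\upres=\upress+\upresp$ via $\proj$ and $\projcompl$, extend each component by zero outside $\Omega$, and apply the representation formulas \eqref{eq:repr.vel.lin.s}--\eqref{eq:repr.pres.lin.p}. For the steady estimate \eqref{eq:linest.pwss}, Lemma \ref{lem:fsolss.conv} controls the volume convolutions of $\fsolss$, $\nabla\fsolss$, and $\fsolpres$ with $f_0\chi_\Omega$, producing the anisotropic rates $\nu^{-1}_{-1}$, $\nu^{-3/2}_{-3/2}$ and $|x|^{-2}$, while Lemma \ref{lem:conv.bdry} takes care of all surface contributions on $\Sigma$. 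The required $\LS{1}(\Sigma)$-controls on $n\cdot\widetilde{\mathsf T}(\uvels,\upress)$ and $h_0$ come from trace embeddings on the bounded $\Sigma$ combined with the steady-state part of \eqref{eq:linest.maxreg}, after fixing the pressure constant $c_0$ so that \eqref{eq:repr.pres.lin.s} holds. For the purely periodic part, the same strategy uses Lemma \ref{lem:fsolpp.conv} with $\mu=3+\delta$ for the volume terms and Lemma \ref{lem:conv.bdry} for the surface terms; summing all contributions, the slowest-decaying bottlenecks are $-Q\ast_G[(n\cdot h_\perp)\delta_\Sigma]\sim|x|^{-2}$ for $\uvelp$, $-\nabla Q\ast_G[(n\cdot h_\perp)\delta_\Sigma]\sim|x|^{-3}$ for $\nabla\uvelp$, and $(\delta_\torus\otimes E)\ast_G[(\partial_t h\cdot n)\delta_\Sigma]\sim|x|^{-1}$ for $\upresp$, which yields precisely the rates claimed in \eqref{eq:linest.pwpp}.

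The refinement \eqref{eq:linest.pwpp.constflux} under the constant-flux hypothesis \eqref{eq:constantflux} would come from the mean-zero estimate \eqref{eq:conv.bdry2} of Lemma \ref{lem:conv.bdry}. Indeed, \eqref{eq:constantflux} forces $\Phi$ to be time-independent, so that $\int_\Sigma h_\perp\cdot n\,\dS=0$ and $\int_\Sigma\partial_t h\cdot n\,\dS=0$; these are exactly the surface densities appearing in the three bottleneck terms identified above, and so \eqref{eq:conv.bdry2} upgrades each of their decay rates by one additional power of $|x|$, producing \eqref{eq:linest.pwpp.constflux}. I expect the main technical obstacle to lie precisely in this bookkeeping: one must track, term by term in \eqref{eq:repr.vel.lin.p}--\eqref{eq:repr.pres.lin.p}, which surface densities acquire zero spatial mean under \eqref{eq:constantflux} and which do not, and then verify that every non-improving term already decays at least as fast as required in \eqref{eq:linest.pwpp.constflux}, so that no hidden bottleneck spoils the final rate.
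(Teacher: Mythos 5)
Your proposal reproduces the paper's proof essentially step for step: it first checks $f\in\LS{p}(\torus;\LS{q}(\Omega))$ by integrating the weights via \eqref{far}, invokes the maximal-regularity theorem of \cite{EiterShibata_FlowPastBodyOscBdry} for \eqref{eq:linear.solreg}--\eqref{eq:linest.maxreg}, then splits into steady and purely periodic parts, feeds the representation formulas \eqref{eq:repr.vel.lin.s}--\eqref{eq:repr.pres.lin.p} into Lemmas~\ref{lem:conv.bdry}, \ref{lem:fsolss.conv}, \ref{lem:fsolpp.conv}, controls the surface densities $n\cdot\widetilde{\mathsf T}$ via trace embedding and \eqref{eq:linest.maxreg}, and invokes the zero-mean bound \eqref{eq:conv.bdry2} under \eqref{eq:constantflux} exactly as the paper does (with the correct caveat that, after the $\nabla Q$ improvement, the volume convolution $\nabla\fsolpp\ast f_\perp$ becomes the $|x|^{-\min\{3+\delta,4\}}$ bottleneck for $\nabla\uvelp$). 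The argument and the ingredients match the paper's own proof; no genuine gap.
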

\begin{proof}
We first show that $f\in\LS{p}(\torus;\LS{s}(\Omega))$
for all $s\in(1,\infty)$. 
With the integral $\mathcal J_R(a,b)$ from~\eqref{eq:Jab}
and the estimate~\eqref{far}
we obtain
\[
\begin{aligned}
\int_{B^{R}}\!\snorm{f_0(x)}^s\,\dx
&\leq \norm{f_0}_{\infty,\nu^{5/2}_1(\cdot;\zeta);\Omega}^s \int_{R}^\infty \! \mathcal J_r(\tfrac{5s}{2},s)\,\dr 
\leq C \norm{f_0}_{\infty,\nu^{5/2}_1(\cdot;\zeta);\Omega}^s\int_{R}^\infty r^{-5s/2+1}\,\dr.
\end{aligned}
\]
Moreover, 
we have
\[
\begin{aligned}
\int_\torus\Bp{\int_{\Omega}\snorm{f_\perp(t,x)}^s\,\dx}^{p/s}\dt
&\leq
\norm{f_\perp}_{\infty, \nu^{3+\delta};\torus\times\Omega}^p
\Bp{\int_{\Omega}|x|^{-(3+\delta)s}\,\dx}^{p/s}.
\end{aligned}
\]
Since the remaining integrals in both estimates are finite,
we obtain $f\in\LS{p}(\torus;\LS{s}(\Omega)^3)$ for any $s\in(1,\infty)$.
Therefore, the existence of a solution $(\uvel,\upres)$ 
in the class given by~\eqref{eq:linear.solreg}
and subject to inequality~\eqref{eq:linest.maxreg}
follows from~\cite[Theorem~4.7]{EiterShibata_FlowPastBodyOscBdry}. Since we can choose any $s<2$,
the velocity field solution satisfies $\uvel\in\LS{p}(\torus;\LS{q}(\Omega)^3)$
for $q\in(2,\infty)$ 
and is unique. Moreover, the pressure field is unique up to addition by a function constant in space,
which corresponds to the function $c=c_0+c_\perp$ in the representation formulas~\eqref{eq:repr.pres.lin.s} and~\eqref{eq:repr.pres.lin.p}
for the pressure.
Fixing $c\equiv 0$, we ensure uniqueness of $\upres$.

To derive the pointwise estimates~\eqref{eq:linest.pwss}, \eqref{eq:linest.pwpp}
and~\eqref{eq:linest.pwpp.constflux},
we use 
the representation formulas~\eqref{eq:repr.vel.lin.s} and~\eqref{eq:repr.vel.lin.p} 
for the steady-state and purely periodic parts 
of the velocity field.
Similarly, we use~\eqref{eq:repr.pres.lin.s} and~\eqref{eq:repr.pres.lin.p}
to obtain the estimates of the pressure $\upres$.
Then the asserted estimates follow directly from
Lemma~\ref{lem:conv.bdry},
Lemma~\ref{lem:fsolss.conv} and Lemma~\ref{lem:fsolpp.conv},
where we use
\[
\begin{aligned}
\norm{n \cdot \widetilde{\mathsf T}(\uvels,\upress)}_{\LS{1}(\Sigma)}
&\leq C\norm{\widetilde{\mathsf T}(\uvels,\upress)}_{\WS{1}{q}(\Omega_{R})}
\\
&\leq C\bp{\norm{\nabla^2\uvels}_{\LS{q}(\Omega)}
+\norm{\nabla\upress}_{\LS{q}(\Omega)}
+\norm{h_0}_{\LS{q}(\Omega)}}
\\
&\leq C\bp{\norm{f_0}_{ \LS{q}(\Omega)}
+\norm{h_0}_{\tracespss}},
\\
\norm{n \cdot \widetilde{\mathsf T}(\uvelp,\upresp)}_{\LS{1}(\torus\times\Sigma)}
&\leq C\norm{\widetilde{\mathsf T}(\uvelp,\upresp)}_{\LS{p}(\torus;\WS{1}{q}(\Omega_{R}))}
\\
&\leq C\bp{\norm{\nabla^2\uvelp}_{\LS{p}(\torus;\LS{q}(\Omega))}
+\norm{\nabla\upresp}_{\LS{p}(\torus;\LS{q}(\Omega))}
 +\norm{h_\perp}_{\tracesptp}}
\\
&\leq C\bp{\norm{f_\perp}_{\LS{p}(\torus; \LS{q}(\Omega))}
+\norm{h_\perp}_{\tracesptp}}
\end{aligned}
\]
due to~\eqref{eq:linest.maxreg},
 where we choose any $R>0$ such that $\partial\Omega\subset B_R$. 
Observe that in the general case, 
the pointwise asymptotic behavior of $\uvelp$, $\nabla\uvelp$ and $\upresp$ is determined by the latter term in the representation formulas~\eqref{eq:repr.vel.lin.p}
and~\eqref{eq:repr.pres.lin.p},
which leads to estimate~\eqref{eq:linest.pwpp} by using estimate~\eqref{eq:conv.bdry1} from Lemma~\ref{lem:conv.bdry}.
If we assume~\eqref{eq:constantflux},
we also have 
\[
\int_\Sigma h_\perp(t,x)\cdot n\,\dS(x)
=0,
\]
so that those terms can be estimated with~\eqref{eq:conv.bdry2} from Lemma~\ref{lem:conv.bdry} instead,
which leads to the better decay rate stated in~\eqref{eq:linest.pwpp.constflux}.
\end{proof}

\subsection{Solutions to the nonlinear problem}

For $k=0,1$ and $\delta\in(0,1]$, we introduce the function space
\[
\begin{aligned}
\calX_k
&\coloneqq\setcl{\vvel\in\LS{p}(\torus;\WSloc{2}{q}(\Omega)^3)\cap\WS{1}{p}(\torus;\LS{q}(\Omega)^3)}{
\Div\vvel=0,\,\norm{\vvel}_{\calX_k}<\infty},
\\
\norm{\vvel}_{\calX_k}
&\coloneqq
\norm{\nabla^2\vvel}_{\LS{p}(\torus;\LS{q}(\Omega))}
+\norm{\partial_t\vvel}_{\LS{p}(\torus;\LS{q}(\Omega))}
\\
&\qquad
+\norm{\proj\vvel}_{\infty,\nu^1_1(\cdot;\zeta); \Omega}
+\norm{\nabla\proj\vvel}_{\infty,\nu^{3/2}_{3/2}(\cdot;\zeta); \Omega}
+N_k(\projcompl\vvel)
\end{aligned}
\]
where
\[
\begin{aligned}
N_0(\wvel)
&\coloneqq
\norm{\wvel}_{\infty,\nu^{2}; \torus\times \Omega}
+\norm{\nabla\wvel}_{\infty,\nu^{3}; \torus\times \Omega},
\\
N_1(\wvel)
&\coloneqq
\norm{\wvel}_{\infty,\nu^{3}; \torus\times \Omega}
+\norm{\nabla\wvel}_{\infty,\nu^{3+\delta}; \torus\times \Omega}.
\end{aligned}
\]
For given $\vvel\in\calX_k$, 
we consider the problem
\begin{equation}\label{eq:nstp.lin.fp}
\left\{\ \begin{aligned}
\partial_t\uvel-\Delta\uvel-\zeta\cdot\nabla\uvel+\nabla\upres&=f
-\calN(\vvel,\vvel)
&&
\tin\torus\times\Omega, \\
\nabla \cdot \uvel&=0 
&&
\tin\torus\times\Omega, \\
\uvel&=h
&&
\ton\torus\times\Sigma,
\end{aligned}\right.
\end{equation}
where the nonlinear term $\calN$ is defined as
\[
\calN(\vvel_1,\vvel_2)\coloneqq\vvel_1\cdot\nabla\vvel_2.
\]
Below we show that the linear theory from 
Theorem~\ref{thm:linear.existence+decay}
provides a solution $(\uvel,\upres)$
to this problem if $\vvel\in\calX_k$.
This defines a solution map
$\calS\colon \vvel\mapsto\uvel$,
and $(\uvel,\upres)$ solves the nonlinear problem~\eqref{eq:nstp}
if $\uvel$ is a fixed point of $\calS$.
For obtaining such a fixed point, 
we first 
prove the following estimates of the convection term,
where we again distinguish steady-state and purely periodic part.

\begin{lem}
\label{lem:nonl.est}
Let $k\in\set{0,1}$ and let $\vvel_1,\vvel_2\in\calX_k$.
Then
\[
\begin{aligned}
&\norm{\nu^{5/2}_1(\cdot;\zeta)\, \proj\calN(\vvel_1,\vvel_2)}_{\LS{\infty}(\Omega)}
+\norm{\nu^{7/2+k/2}\,\projcompl\calN(\vvel_1,\vvel_2)}_{\LS{\infty}(\torus\times\Omega)}
\leq C\norm{\vvel_1}_{\calX_k}\norm{\vvel_2}_{\calX_k}.
\end{aligned}
\]
\end{lem}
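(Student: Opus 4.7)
The plan is to decompose $\calN(v_1,v_2)=v_1\cdot\nabla v_2$ according to the splitting $v_i=(v_i)_0+(v_i)_\perp$, reduce each piece to a pointwise product of two quantities of known decay, and then verify that the weights $\nu^{5/2}_1(\cdot;\zeta)$ and $\snorm{x}^{7/2+k/2}$ absorb those decay rates uniformly on $\Omega$.

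First, I would expand $v_1\cdot\nabla v_2$ into four bilinear contributions and apply $\proj$ and $\projcompl$. Since $\proj$ annihilates any product with exactly one purely-periodic factor,
\begin{align*}
\proj\calN(v_1,v_2) &= (v_1)_0\cdot\nabla(v_2)_0 + \proj\bp{(v_1)_\perp\cdot\nabla(v_2)_\perp},\\
\projcompl\calN(v_1,v_2) &= (v_1)_0\cdot\nabla(v_2)_\perp + (v_1)_\perp\cdot\nabla(v_2)_0 + \projcompl\bp{(v_1)_\perp\cdot\nabla(v_2)_\perp}.
\end{align*}
Both $\proj$ and $\projcompl$ are bounded on $\LS{\infty}(\torus\times\Omega)$, so it suffices to bound the ``bare'' bilinear products under the essential supremum in $(t,x)$.

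Second, I would insert the pointwise bounds encoded in the $\calX_k$-norm: anisotropic decay $\snorm{(v_i)_0}\leq C\norm{v_i}_{\calX_k}\nu^{-1}_{-1}(\cdot;\zeta)$ and $\snorm{\nabla(v_i)_0}\leq C\norm{v_i}_{\calX_k}\nu^{-3/2}_{-3/2}(\cdot;\zeta)$ for the steady parts, and isotropic decay $\snorm{(v_i)_\perp}\leq C\norm{v_i}_{\calX_k}\snorm{x}^{-(2+k)}$ together with $\snorm{\nabla(v_i)_\perp}\leq C\norm{v_i}_{\calX_k}\snorm{x}^{-3}$ or $C\norm{v_i}_{\calX_k}\snorm{x}^{-(3+\delta)}$ (for $k=0$ or $k=1$ respectively). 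Multiplying by the target weight then reduces each contribution to a monomial in $\snorm{x}$ and $(1+\wakefct)$. The cleanest case is the steady$\times$steady piece: $\nu^{5/2}_1\cdot\nu^{-1}_{-1}\cdot\nu^{-3/2}_{-3/2}=(1+\wakefct)^{-3/2}\leq 1$. For the periodic$\times$periodic cross term in $\proj\calN$, the product is at most $C\snorm{x}^{-5}$ (if $k=0$) or $C\snorm{x}^{-(6+\delta)}$ (if $k=1$), and $\nu^{5/2}_1\leq C(\snorm{x}^{5/2}+\zeta_0\snorm{x}^{7/2})$ combined with $\snorm{x}\geq d\coloneqq\operatorname{dist}(0,\Omega)>0$ and $\snorm{\zeta}\leq\zeta_0$ easily absorbs it. Analogously, the three contributions to $\projcompl\calN$ weighted by $\snorm{x}^{7/2+k/2}$ reduce to non-positive powers of $\snorm{x}$ multiplied by bounded factors such as $(1+\wakefct)^{-1}$ or $(1+\wakefct)^{-3/2}$; for instance $(v_1)_0\cdot\nabla(v_2)_\perp$ yields $\snorm{x}^{-1/2}(1+\wakefct)^{-1}$ for $k=0$ and $\snorm{x}^{-\delta}(1+\wakefct)^{-1}$ for $k=1$.

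The entire argument is essentially an exercise in tracking exponents; no deep analytic input is required beyond what is already in the definition of $\calX_k$. The only point requiring attention, and thus the mild obstacle, is the interaction between the anisotropic weights $\nu^\alpha_\beta(\cdot;\zeta)$ attached to the steady-state bounds and the purely isotropic $\snorm{x}^\alpha$ weights attached to the periodic bounds: whenever these two kinds meet, I would bound $(1+\wakefct)^{-1}\leq 1$ in the favorable direction and $(1+\wakefct)\leq 1+\zeta_0\snorm{x}$ in the unfavorable one, compensating the resulting extra growth by the extra spatial decay of the periodic factor together with the lower bound $\snorm{x}\geq d>0$.
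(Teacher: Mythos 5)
Your proposal is correct and follows essentially the same route as the paper: split $\calN(v_1,v_2)$ according to $\proj$ and $\projcompl$, insert the pointwise decay rates encoded in $\norm{\cdot}_{\calX_k}$, and track exponents term by term, using $(1+\wakefct)\leq 1+\zeta_0\snorm{x}$ and $\snorm{x}\geq \operatorname{dist}(0,\Omega)>0$ to absorb the residual weight growth. The only stylistic difference is that you make explicit the compensation between the anisotropic weight and the extra spatial decay of the purely periodic factor, which the paper leaves implicit; the substance and all the exponent bookkeeping coincide.
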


\begin{proof}
We set $\vvel_j=\zvel_j+\wvel_j$ with $\zvel_j\coloneqq\proj\vvel_j$ 
and $\wvel_j=\projcompl\vvel_j$ for $j=1,2$.
Then we have 
\[
\begin{aligned}
\proj\calN(\vvel_1,\vvel_2) 
&= \zvel_1\cdot\nabla\zvel_2 + \proj(\wvel_1\cdot\nabla\wvel_2),
\\
\projcompl\calN(\vvel_1,\vvel_2) 
&= \zvel_1\cdot\nabla\wvel_2 
+ \wvel_1\cdot\nabla\zvel_2
+ \projcompl(\wvel_1\cdot\nabla\wvel_2).
\end{aligned}
\]
Therefore, for $x\in\Omega$ we can estimate
\[
\begin{aligned}
\nu^{5/2}_1(x;\zeta)\, &\snorm{\proj\calN(\vvel_1,\vvel_2)(x)}
\\
&\leq C \left( \nu^{1}_1(x;\zeta)\, \snorm{\zvel_1(x)}\, 
\snorm{x}^{3/2}\snorm{\nabla\zvel_2(x)}
+ \nu^{0}_1(x;\zeta)\, \snorm{\wvel_1(t,x)}  \snorm{x}^{5/2} \snorm{\nabla\wvel_2(t,x)} \right)
\\
&\leq C \norm{\vvel_1}_{\calX_k}\norm{\vvel_2}_{\calX_k},
\end{aligned}
\]
and
\[
\begin{aligned}
\snorm{x}^{7/2+k/2}\, &\snorm{\projcompl\calN(\vvel_1,\vvel_2)(t,x)}
\\
&\leq C\bp{\snorm{x}\, \snorm{\zvel_1(x)}\,\snorm{x}^{5/2+k/2}\snorm{\nabla\wvel_2(t,x)}
+ \snorm{x}^{2+k/2}\, \snorm{\wvel_1(t,x)}\,\snorm{x}^{3/2}\,\snorm{\nabla\zvel_2(x)}
\\
&\qquad\qquad
+\snorm{x}^{2+k/2}\, \snorm{\wvel_1(t,x)}\,\snorm{x}^{3/2}\,\snorm{\nabla\wvel_2(t,x)}
}
\\
&\leq C \norm{\vvel_1}_{\calX_k}\norm{\vvel_2}_{\calX_k}.
\end{aligned}
\]
This shows the asserted estimates.
\end{proof}

We can now show existence of a solution to~\eqref{eq:nstp}
by a fixed-point argument.

\begin{proof}[Proof of Theorem~\ref{thm:strongsol}]
We set $k=0$ in the general case and we set $k=1$ when \eqref{eq:constantflux} is satisfied.
For $\varepsilon>0$
consider the set 
\[
\calX_{k,\varepsilon}\coloneqq
\setcl{\vvel\in \calX_k}{\norm{\vvel}_{\calX_k}\leq\varepsilon}.
\]
In virtue of Lemma~\ref{lem:nonl.est}
and Theorem~\ref{thm:linear.existence+decay},
for any $\vvel\in \calX_k$ 
there exists a solution $(\uvel,\upres)$ to~\eqref{eq:nstp.lin.fp}
with the regularity stated in~\eqref{eq:linear.solreg}
and subject to the estimates
\[
\begin{aligned}
&\nu^1_1(x;\zeta)\, 
\snorml{\uvels(x)}
+ \nu^{3/2}_{3/2}(x;\zeta)\, \snorml{\nabla\uvels(x)}
+ \snorm{x}^2\, \snorml{\upress(x)}
\\
&\qquad
\leq C \bp{
\norm{\nu^{5/2}_1(\cdot;\zeta)\, \proj\calN(\vvel,\vvel)}_{\LS{\infty}(\Omega)}
+\norm{\nu^{5/2}_1(\cdot;\zeta) f_0}_{\LS{\infty}(\Omega)}
+\norm{h_0}_{\tracespss}}
\\
&\qquad
\leq C\bp{\norm{\vvel}_{\calX_k}^2+\varepsilon^2},
\\
&\snorm{x}^{2+k}
\snorml{\uvelp(t,x)}
+ \snorm{x}^{3+\min\set{k,\delta}} \snorml{\uvelp(t,x)}
+ \snorm{x}^{1+k}\, \snorml{\upresp(t,x)}
\\
&\qquad
\leq C \bp{
\norm{\nu^{3+\delta}\projcompl\calN(\vvel,\vvel)}_{\LS{p}(\torus;\LS{\infty}(\Omega))}
+\norm{\nu^{3+\delta} f_\perp}_{\LS{p}(\torus;\LS{\infty}(\Omega))}
+\norm{h_\perp}_{\tracesptp}}
\\
&\qquad
\leq C \bp{\norm{\vvel}_{\calX_k}^2+\varepsilon^2}.
\end{aligned}
\]
For $\vvel\in \calX_{k,\varepsilon}$, we thus have
\[
\norm{\uvel}_{\calX_k}\leq C \varepsilon^2 \leq \varepsilon
\]
if $\varepsilon>0$ is chosen sufficiently small.
Then the solution map $\calS\colon\vvel\mapsto\uvel$ is a well-defined 
self mapping $\calS\colon {\calX}_{k,\varepsilon}\to {\calX}_{k,\varepsilon}$.
Moreover, for $\vvel_1,\vvel_2\in\calX_{k,\varepsilon}$, 
the differences $\ouvel=\uvel_1-\uvel_2$ and $\oupres=\upres_1-\upres_2$, 
where $\uvel_j\coloneqq\calS(\vvel_j)$ with corresponding pressure $\upres_j$, $j=1,2$, satisfy
\[
\left\{ \ 
\begin{aligned}
\partial_t\ouvel-\Delta\ouvel-\zeta\cdot\nabla\ouvel+\nabla\oupres&=
-\calN(\vvel_1,\vvel_1)+\calN(\vvel_2,\vvel_2)
&&
\tin\torus\times\Omega, \\
\nabla \cdot \ouvel&=0 
&&
\tin\torus\times\Omega, \\
\ouvel&=0
&&
\ton\torus\times\Sigma.
\end{aligned}
\right.
\]
Noting that 
\[
\calN(\vvel_1,\vvel_1)-\calN(\vvel_2,\vvel_2)
=\calN(\vvel_1-\vvel_2,\vvel_1)+\calN(\vvel_2,\vvel_1-\vvel_2),
\]
we can adapt the same argument as before to conclude the estimate
\[
\norm{\calS(\vvel_1)-\calS(\vvel_2)}_{\calX_k}
=\norm{\overline\uvel}_{\calX_k}
\leq C\np{\norm{\vvel_1}_{\calX}+\norm{\vvel_2}_{\calX_k}}
\norm{\vvel_1-\vvel_2}_{\calX_k}
\leq 2C\varepsilon\norm{\vvel_1-\vvel_2}_{\calX_k}.
\]
Hence, choosing $\varepsilon>0$ sufficiently small,
we obtain that $\calS$ is also a contraction.
Finally, the contraction mapping principle yields
the existence of a unique fixed point $\uvel=\calS(\uvel)\in\calX_{k,\varepsilon}$.
If $\upres$ denotes the associated pressure, 
then $(\uvel,\upres)$ is a solution to \eqref{eq:nstp}
with the asserted properties.
\end{proof}

\section{Existence in the truncated domains}
\label{truncateddomain}

Our aim is to find a solution $( \vvel,\vpres)$ to the problem \eqref{eq:nstp.pert}--\eqref{abc} defined in the truncated domain $\Omega_R$. We seek a velocity field in the form $
v = \widetilde{h} + \vartheta$ with $\widetilde{h}$ an appropriate extension of $h$ to $\Omega _R$ and $
\vartheta \in  L^2(\mathbb T;H^1(\Omega_R)^3)$ satisfying $\vartheta|_{\Sigma} = 0$ and $\nabla \cdot  \vartheta =0$ in ${\mathbb T} \times \Omega_R$.

\subsection{Functions spaces over the truncated domains}

In what follows, the usual inner products in $L^2(\Omega_R)$ and $L^2(\partial B_R)$ will be denoted by $(\cdot, \cdot )_{\Omega_R}$ and $(\cdot,\cdot)_{\partial B_R}$, respectively. As in \cite{DeuringKracmarExtStatNSFApprBddDom2004,DKN2021}, we consider $H^1(\Omega_R)$ endowed with inner product and norm
\begin{equation}
( v,w)_{(R)} := (\nabla v,\nabla w)_{\Omega_R} + \frac{1}{R}(v,w)_{\partial B_R}, \,\,
\| w \|_{(R)} = \left( \| \nabla w \|^2_{2,\Omega_R} + \frac{1}{R} \| w \|^2_{2,\partial B_R} \right)^{\sfrac{1}{2}}
\label{ipnR}
\end{equation}
and we equip the space of time-periodic functions $L^2({\mathbb T};H^1(\Omega_R))$ with the norm
\[
\| w \|_{({\mathbb T},R)}:= \left( \int_{\mathbb T} \| w \|_{(R)}^2 \dt \right)^{\sfrac{1}{2}} .
\]

Within this framework, the following estimate holds for time-periodic functions:
\begin{lem}
\label{estdk}
Take a fixed $S \in (0,\infty)$ with $\partial\Omega \subset B_{S}$ and $R > S$. Then there is a constant $C(S)>0$ such that
$$
\left(  \int_{{\mathbb T} \times B_R \setminus \overline{B_{S}}}\frac{|u(t,x)|^2}{|x|^2} \dx \dt \right)^{\sfrac{1}{2}} \leq C(S) \| u \|_{({\mathbb T},R)}
$$
for all $u \in L^2({\mathbb T};H^1(\Omega_R))$.
\end{lem}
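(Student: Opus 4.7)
The plan is to reduce the time-periodic bound to a pointwise-in-time Hardy-type estimate on the spherical shell $B_R\setminus\overline{B_S}$ and then integrate over $\torus$. The key identity is that the vector field $\phi(x)=x/|x|^2$ on $\R^3\setminus\set{0}$ satisfies $\nabla\cdot\phi=1/|x|^2$, with $\phi\cdot n=1/R$ on $\partial B_R$ (outward normal) and $\phi\cdot n=-1/S$ on $\partial B_S$. This precisely matches the boundary weight $1/R$ appearing in the norm $\norm{\cdot}_{(R)}$.

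For fixed $t\in\torus$ and $u(t,\cdot)\in H^1(\Omega_R)$, I would integrate by parts on $B_R\setminus\overline{B_S}\subset \Omega_R$ (this is contained in $\Omega_R$ because $\partial\Omega\subset B_S$) to obtain
\begin{equation*}
\int_{B_R\setminus B_S}\frac{|u|^2}{|x|^2}\dx
= -\int_{B_R\setminus B_S}\frac{2\,u\,\nabla u\cdot x}{|x|^2}\dx
+ \frac{1}{R}\int_{\partial B_R}|u|^2\dS - \frac{1}{S}\int_{\partial B_S}|u|^2\dS.
\end{equation*}
The last term is nonpositive and can be dropped. Applying Cauchy--Schwarz to the bulk term yields, with $A(t)\coloneqq\bp{\int_{B_R\setminus B_S}|u(t,x)|^2/|x|^2\,\dx}^{\sfrac{1}{2}}$,
\begin{equation*}
A(t)^2 \leq 2\,A(t)\,\norm{\nabla u(t)}_{2;\Omega_R} + \frac{1}{R}\norm{u(t)}_{2;\partial B_R}^2.
\end{equation*}
A weighted Young inequality ($2ab\leq \tfrac{1}{2}a^2+2b^2$) absorbs the first term on the right-hand side into the left-hand side, giving
\begin{equation*}
A(t)^2 \leq 4\,\norm{\nabla u(t)}_{2;\Omega_R}^2 + \frac{2}{R}\norm{u(t)}_{2;\partial B_R}^2 \leq 4\,\norm{u(t)}_{(R)}^2.
\end{equation*}

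Finally, integrating this pointwise bound in $t$ over $\torus$ and using the definition of $\norm{\cdot}_{(\torus,R)}$ gives the result with $C(S)=2$ (the condition $\partial\Omega\subset B_S$ is only used to ensure $B_R\setminus\overline{B_S}\subset\Omega_R$, so in fact the constant is independent of $S$). There is no real obstacle here; the only subtle point is choosing $\phi(x)=x/|x|^2$ precisely so that its normal trace on $\partial B_R$ equals $1/R$, matching the artificial-boundary weight built into $\norm{\cdot}_{(R)}$, and verifying that the negative boundary contribution on $\partial B_S$ may be safely discarded.
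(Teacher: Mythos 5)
Your proof is correct. The paper disposes of this lemma by citing the steady-state version \cite[Theorem 3.6]{DeuringKracmarExtStatNSFApprBddDom2004} and noting that the time-periodic estimate follows by integrating over $\torus$; the argument you give (integrating $|u|^2\nabla\cdot(x/|x|^2)$ over the shell, using that the flux of $x/|x|^2$ through $\partial B_R$ equals $1/R$ and dropping the nonpositive inner-boundary contribution, then absorbing the cross term by Cauchy--Schwarz and Young) is precisely the standard divergence-form Hardy inequality underlying that cited result. Your version has the minor advantages of being self-contained, of producing the explicit constant $C=2$, and of making transparent that the constant is in fact independent of $S$ (the hypothesis $\partial\Omega\subset B_S$ serving only to guarantee $B_R\setminus\overline{B_S}\subset\Omega_R$). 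One point worth stating if this were to replace the citation: the integration by parts is first justified for $u$ smooth and then extended to $u\in H^1(\Omega_R)$ by density, which is routine since the shell is a Lipschitz domain away from the origin.
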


\begin{proof}
We can directly apply the reasoning from \cite[Theorem 3.6]{DeuringKracmarExtStatNSFApprBddDom2004}.
\end{proof}

The space 
\[
W_R:= \left\{ w \in H^1(\Omega_R)^3: \, w|_{\Sigma} =0 \right\},
\]
with inner product and norm \eqref{ipnR}, will be relevant in the analysis of problem \eqref{eq:nstp.pert}--\eqref{abc}.

Consider the following subspaces of divergence-free functions of $W_R$,
\[
{\mathscr V}_R:= \left\{\varphi |_{\Omega_R} : \, \varphi \in C_0^{\infty}(\Omega)^3 \text{ and } \nabla \cdot \varphi =0 \textrm{ in } \Omega \right\},
\]
\[
V_R := \textrm{ the closure  of } {\mathscr V}_R
\textrm{ in } H^{1}(\Omega_R)^3,
\]
and the space
\[
H_R := \textrm{ the closure of }  {\mathscr V}_R 
\textrm{ in }  L^2(\Omega_R)^3.
\]
If $\Omega$ is a domain with a Lipschitz continuous boundary, then  
$$
H_R = \left\{v \in L^2 (\Omega_R)^3 : \nabla \cdot v =0
\textrm{ in }  \Omega_R \textrm{ and } v \cdot n = 0 \textrm{ on } {\Sigma} \right\},
$$ 
where $n$ represents the unit outer normal on $\Sigma$, with $\nabla \cdot v =0$ and $v \cdot n$ interpreted in the weak
sense, and
$$
V_R = \left\{v \in H^1 (\Omega_R)^3 : \nabla \cdot v =0
\textrm{ in } \Omega_R \textrm{ and } v = 0 \textrm{ on }
       \Sigma \right\}. 
$$  
For $\sfrac{6}{5} \leq q_1 < 6$ and $ \sfrac{4}{3} \leq q_2 < 4$, we have the embeddings
\begin{equation}
V_R \xhookrightarrow{c}  L^{q_1}(\Omega) \oplus L^{q_2}(\partial B_R) \xhookrightarrow{} V'_R,
\label{compac}
\end{equation}
which are compact and continuous, respectively.
For $f \in L^2(\Omega_R)^3$, a weak solution to the Stokes problem
\[
\left\{ \ 
\begin{aligned}
- \Delta u +  \nabla \mathscr{p}  &=  f && \textrm{ in  } \Omega_R  \\
\nabla \cdot u &= 0  && \textrm{ in  } \Omega_R,  \\
u &= 0 &&  \textrm{ on  }  \Sigma,  \\
\frac{x}{R} \cdot \nabla u -  \mathscr{p} \frac{x}{R}  + \frac 1R u &= 0 && \textrm{ on  }  \partial B_R,
\end{aligned}
\right.
\]
is a field $u \in V_R$ such that 
\[
 (\nabla u,\nabla \varphi)_{\Omega_R} + \frac{1}{R}(u,\varphi)_{\partial B_R} = (f,\varphi)_{\Omega_R}  \quad \forall \varphi \in V_R.
\]
Based on this Stokes problem, it is possible to construct a special basis for the spaces $H_R$ and $V_R$. 
\begin{lem} The spectral problem 
$$
(\nabla \Psi,\nabla \varphi)_{\Omega_R} + \frac{1}{R}(\Psi,\varphi)_{\partial B_R} = \lambda (\Psi,\varphi)_{\Omega_R},  \quad  \forall \varphi \in V_R
$$
admits a sequence $\{\Psi_k\}_{k \in \mathbb N} \subset V_R$ of non-zero solutions corresponding to a sequence $\{\lambda_k\}_{k \in \mathbb N}$ of eigenvalues
$$
0 < \lambda_1 \leq \lambda_2 \leq \lambda_3 \leq ...
$$
which satisfies $\lambda_k \to \infty$ as $k \to \infty$.

Moreover, we can choose $\{\Psi_k\}_{k \in \mathbb N}$ in such a way that it forms an
  orthonormal basis of $H_R$ and $\left\{\sfrac{\Psi_k}{\lambda_k^{\sfrac{1}{2}}}\right\}_{k \in \mathbb N}$ is an orthonormal basis of $V_R$. 
\label{basis}
\end{lem}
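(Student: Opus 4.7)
The plan is to recast the spectral problem as an eigenvalue problem for a compact, self-adjoint, positive-definite operator on the Hilbert space $H_R$, and then apply the classical spectral theorem for such operators.

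First, I would introduce the bilinear form
\[
a(u,v) := (\nabla u, \nabla v)_{\Omega_R} + \tfrac{1}{R}(u,v)_{\partial B_R},
\]
which is exactly the inner product $(u,v)_{(R)}$ on $W_R$. Since every element of $V_R$ vanishes on $\Sigma$ and $\Omega_R$ is bounded, the Poincar\'e inequality gives $\|v\|_{L^2(\Omega_R)} \le C\|\nabla v\|_{L^2(\Omega_R)}$, so $a$ is continuous and coercive on $V_R$ with respect to the full $H^1(\Omega_R)$-norm. For each $f\in H_R$, the Lax--Milgram (equivalently, Riesz representation) theorem then produces a unique $u =: Tf \in V_R$ with
\[
a(Tf,\varphi) = (f,\varphi)_{\Omega_R} \qquad \text{for all } \varphi \in V_R,
\]
so $T:H_R\to V_R$ is a bounded linear operator whose range lies in $V_R$.

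Next, composing $T$ with the compact embedding $V_R \hookrightarrow L^2(\Omega_R)^3$ (which is Rellich's theorem, recorded in~\eqref{compac}) and then restricting to $H_R$, I obtain a compact operator $T:H_R\to H_R$. Its self-adjointness follows from the symmetry of $a$: for $f,g\in H_R$,
\[
(Tf,g)_{\Omega_R} = a(Tg,Tf) = a(Tf,Tg) = (Tg,f)_{\Omega_R}.
\]
Moreover, $(Tf,f)_{\Omega_R} = a(Tf,Tf) = \|Tf\|_{(R)}^2 \ge 0$, with equality only when $Tf=0$, in which case the defining identity forces $f=0$. Hence $T$ is positive and injective. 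The spectral theorem for compact self-adjoint operators then yields a sequence $\mu_1\ge\mu_2\ge\cdots>0$ with $\mu_k\to 0$ and a corresponding orthonormal basis $\{\Psi_k\}_{k\in\N}$ of $H_R$ made of eigenfunctions $\Psi_k = \mu_k^{-1}T\Psi_k\in V_R$. Setting $\lambda_k := 1/\mu_k$, the relation $T\Psi_k = \mu_k\Psi_k$ translates precisely into the weak eigenvalue equation $a(\Psi_k,\varphi) = \lambda_k(\Psi_k,\varphi)_{\Omega_R}$ for all $\varphi\in V_R$, with $\lambda_k\nearrow\infty$.

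Finally, the $V_R$-orthonormality of $\{\Psi_k/\sqrt{\lambda_k}\}$ follows from a direct computation using $\varphi=\Psi_j$ in the eigenvalue equation:
\[
(\Psi_k,\Psi_j)_{(R)} = a(\Psi_k,\Psi_j) = \lambda_k(\Psi_k,\Psi_j)_{\Omega_R} = \lambda_k\,\delta_{kj}.
\]
To see that the renormalized family spans all of $V_R$, I would take any $v\in V_R$ with $(v,\Psi_k)_{(R)}=0$ for every $k$; the same identity then gives $\lambda_k(v,\Psi_k)_{\Omega_R}=0$, hence $(v,\Psi_k)_{\Omega_R}=0$ for all $k$, and since $\{\Psi_k\}$ is an orthonormal basis of $H_R \supseteq V_R$, we conclude $v=0$. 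No step is especially delicate; the only point worth checking carefully is the compactness of the embedding $V_R\hookrightarrow H_R$, which is standard because $\Omega_R$ is bounded with Lipschitz boundary, and the rest is a textbook application of spectral theory.
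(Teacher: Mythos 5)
Your proposal is correct and follows essentially the same route as the paper's proof: Lax--Milgram to define the solution operator $H_R\to V_R$, compactness via the Rellich embedding $V_R\hookrightarrow L^2$, self-adjointness and positivity of the resulting operator on $H_R$, and the spectral theorem to produce the eigenbasis, with the same density and orthonormality computations. You are slightly more explicit than the paper about verifying compactness, injectivity (which guarantees the eigenfunctions span all of $H_R$ rather than only $(\ker T)^\perp$), and the passage from $\mu_k\to 0$ to $\lambda_k\to\infty$, but the argument is identical in substance.
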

\begin{proof} Given $\Psi \in H_R$, by Lax-Milgram Theorem, the problem
$$
(\nabla u,\nabla \varphi)_{\Omega_R} + \frac{1}{R}(u,\varphi)_{\partial B_R}  = (\Psi,\varphi)_{\Omega_R},  \quad \forall \varphi \in V_R
$$
has a unique solution $u \in V_R$. 
The solution operator $\calS : H_R \rightarrow  H_R$,  $\Psi \mapsto u$, is compact, self-adjoint and  positive. Hence, $H_R$ admits an orthonormal basis of eigenfunctions $\Psi_k  \in V_R$ of $\calS$ with corresponding eigenvalues $\mu_k$ satisfying $\mu_k > 0,$ for all $k \in {\mathbb N},$ and $\mu_k \rightarrow 0$ as $k \rightarrow \infty.$ Thus, defining $\lambda_k = \sfrac{1}{\mu_k}$, we obtain
\begin{equation}
(\nabla \Psi_k,\nabla \varphi)_{\Omega_R} + \frac{1}{R}(\Psi_k,\varphi)_{\partial B_R} = \lambda_k (\Psi_k,\varphi)_{\Omega_R}  \quad \forall \varphi \in V_R.
\label{vap2}
\end{equation}

Suppose that $v \in V_R$ satisfies $(\Psi_k,v)_{(R)}=(\nabla \Psi_k,\nabla v)_{\Omega_R} + (\Psi_k,v)_{\partial B_R}\big/R = 0$ for all $k \in \mathbb N$. From \eqref{vap2},  it follows that $(\Psi_k,v)_{\Omega_R} = 0$ for all $k \in \mathbb N$, and since $\{\Psi_k\}_{k \in \mathbb N}$ is a basis of  $H_R$, we conclude that $v \equiv 0$. Hence, the linear span of $\{\Psi_k\}_{k \in \mathbb N}$ is dense in $V_R$. From \eqref{vap2}, we further obtain
\[
\begin{aligned} 
\bigg( \frac{\Psi_k}{\lambda_k^{\sfrac{1}{2}}} , \frac{\Psi_j}{\lambda_j^{\sfrac{1}{2}}} \bigg)_{(R)}
&=\bigg( \frac{\nabla \Psi_k}{\lambda_k^{\sfrac{1}{2}}} , \frac{\nabla \Psi_j}{\lambda_j^{\sfrac{1}{2}}} \bigg)_{\Omega_R} + 
\frac{1}{R}\bigg( \frac{\Psi_k}{\lambda_k^{\sfrac{1}{2}}} , \frac{\Psi_j}{\lambda_j^{\sfrac{1}{2}}} \bigg)_{\partial B_R} 
\\
& = \frac{\lambda_k}{\lambda_k^{\sfrac{1}{2}} \lambda_j^{\sfrac{1}{2}}} (\Psi_k,\Psi_j)_{\Omega_R}
 = \frac{\lambda_k}{\lambda_k^{\sfrac{1}{2}} \lambda_j^{\sfrac{1}{2}}} \delta_{kj}= \delta_{kj}, \qquad \forall j,k \in {\mathbb N}.
\end{aligned}
\] 
Therefore, $\left\{\sfrac{\Psi_k}{\lambda_k^{\sfrac{1}{2}}}\right\}_{k \in \mathbb N}$ is an orthonormal basis of $V_R$. 
\end{proof}

\subsection{Weak solutions in the truncated domain}

Assume $\Omega$ is a Lipschitz domain and recall the total flux of $h$ over $\Sigma$, given by $\Phi(t):=\int_{\Sigma}  h(t,x)  \cdot n(x) \dS(x)$. To simplify the presentation, for each fixed $R$, we define
\begin{equation}
c(u,v,w)  := \int_{\Omega_R}  u \cdot \nabla v \cdot w \, \dx -  \frac 12  \int_{\partial B_R} \left( \frac{x}{R} \cdot u \right) (v  \cdot w) \, \dS(x),
\label{defc}
\end{equation}
which is well defined for $u,v,w \in H^1(\Omega_R)^3$ and satisfies
\begin{equation}
c(u,v,v) = 0, \quad \forall  u \in H_R,\, 
v \in V_R.
\label{cigual0} 
\end{equation}
In what follows, $\sigma(x):= \nabla E(x)= - \frac{x}{4\pi |x|^3}$.
 Observe that $\sigma=-P$ for the pressure part $P$ of fundamental solution, defined in~\eqref{PE}. 
\begin{lem}
\label{div}
Given $h \in H^1({\mathbb T};H^{\sfrac{1}{2}}(\partial \Omega)^3)$
define $\Phi$ as in~\eqref{eq:flux}.
Let $R_0$ be such that $\partial\Omega\subset B_{R_0}$.
For any $\gamma >0$, there exists $\widetilde{h} \in H^1({\mathbb T} ; H^1(\Omega)^3)$ satisfying 
\begin{equation} 
\left\{\ 
 \begin{aligned}
  \nabla \cdot \widetilde{h}  &=  0 &&\text{ in } {\mathbb T} \times \Omega, \\
   \widetilde{h}  &= h   &&\text{ on }   {\mathbb T} \times \Sigma,
    \end{aligned}
    \right.
    \label{tdiv}
\end{equation}
and the estimate
\begin{equation}
|  c(\vartheta,\vartheta,\tilde{h}) |    \leq  \gamma  \| \vartheta  \|_{(R)}^2  + \|  \Phi  \|_{\infty,{\mathbb T}} \left( C_S \| \sigma \|_{3,\Omega_R}  +  \frac{1}{8 \pi  R } \right)  \| \vartheta  \|_{(R)}^2 \text{ in } {\mathbb T}
\label{estdiv}
\end{equation}
for all $R>R_0$,
where $C_S$ is a Sobolev embedding constant.
\end{lem}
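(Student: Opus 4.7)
The proof strategy is to decompose $\widetilde h$ into a flux-carrying field $H_\sigma$ built from $\sigma$, which absorbs the total boundary flux $\Phi(t)$, plus a Hopf-type solenoidal extension $H_1$ of a zero-flux residual that can be made arbitrarily small in the convective sense. I would first introduce $H_\sigma(t,x) := \alpha \Phi(t)\sigma(x)$, choosing the sign $\alpha \in \{-1,+1\}$ so that $\int_\Sigma H_\sigma(t,\cdot)\cdot n\,\dS = \Phi(t)$; this sign is fixed by applying the divergence theorem on $\Omega \cap B_R$, which via $\Delta E = -\delta_{\mathbb R^3}$ and the pointwise identity $\sigma(x)\cdot(x/R) = -1/(4\pi R^2)$ on $\partial B_R$ yields $\snorm{\int_\Sigma \sigma\cdot n\,\dS} = 1$. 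Since $0 \notin \overline\Omega$ and $\nabla\cdot\sigma = \Delta E$, the field $H_\sigma$ is divergence-free in $\Omega$; the pointwise bounds $\snorm{\sigma(x)} \leq C \snorm{x}^{-2}$ and $\snorm{\nabla\sigma(x)} \leq C\snorm{x}^{-3}$ yield $\sigma \in H^1(\Omega)$, so that $H_\sigma \in H^1(\mathbb T; H^1(\Omega)^3)$ by linearity in $\Phi \in H^1(\mathbb T)$.

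Next, the residual $h_1 := h - H_\sigma|_{\mathbb T\times\Sigma}$ lies in $H^1(\mathbb T; H^{1/2}(\Sigma)^3)$ and satisfies $\int_\Sigma h_1(t,\cdot)\cdot n\,\dS = 0$ for every $t \in \mathbb T$. Applying the classical Hopf extension lemma, with the construction carried out linearly in $h_1$, for every $\gamma > 0$ there is a solenoidal extension $H_1 \in H^1(\mathbb T; H^1(\Omega)^3)$ of $h_1$ whose spatial support is contained in $\overline{B_{R_0}} \cap \overline\Omega$, and which satisfies
\[
\int_{\Omega_R} \snorm{\vartheta \cdot \nabla \vartheta \cdot H_1}\,\dx \leq \gamma\, \norm{\nabla \vartheta}_{L^2(\Omega_R)}^2
\]
pointwise in $t$ for every $\vartheta \in W_R$. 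This is the standard Hopf bound, obtained by concentrating $H_1$ in a $\gamma$-dependent thin collar of $\Sigma$ so that $\snorm{H_1(x)} \leq \gamma / \mathrm{dist}(x,\Sigma)$, and then combining with Hardy's inequality for $\vartheta$ vanishing on $\Sigma$. Setting $\widetilde h := H_1 + H_\sigma$ verifies \eqref{tdiv}, and since $\mathrm{supp}\, H_1 \subset B_{R_0}$, the boundary contribution of $H_1$ to $c$ vanishes for $R > R_0$, giving $\snorm{c(\vartheta,\vartheta,H_1)} \leq \gamma \norm{\vartheta}_{(R)}^2$. For the bulk term in $c(\vartheta,\vartheta,H_\sigma)$, H\"older's inequality with exponents $(3,2,6)$ together with the uniform-in-$R$ Sobolev embedding $\norm{\vartheta}_{L^6(\Omega_R)} \leq C_S \norm{\vartheta}_{(R)}$ from \cite{DeuringKracmarExtStatNSFApprBddDom2004} gives $\snorml{\int_{\Omega_R}\vartheta\cdot\nabla\vartheta\cdot H_\sigma\,\dx} \leq \norm{\Phi}_{\infty,\mathbb T}\, C_S\, \norm{\sigma}_{3,\Omega_R}\, \norm{\vartheta}_{(R)}^2$; and for the boundary term the identity $\snorm{\sigma(x)} = 1/(4\pi R^2)$ on $\partial B_R$ together with $\norm{\vartheta}_{L^2(\partial B_R)}^2 \leq R\norm{\vartheta}_{(R)}^2$ from \eqref{ipnR} yields the bound $\norm{\Phi}_{\infty,\mathbb T}/(8\pi R)\, \norm{\vartheta}_{(R)}^2$. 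Summing these contributions with the Hopf estimate produces \eqref{estdiv}.

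The main obstacle is the Hopf extension step, which must simultaneously be (i) linear in $h_1$ so that the $H^1$-in-time regularity of the data is inherited by $H_1$, (ii) supported in a fixed bounded set independent of $R$ so that the $\partial B_R$ contribution disappears, and (iii) flexible enough to produce an arbitrarily small coefficient $\gamma$ in front of $\norm{\vartheta}_{(R)}^2$. All three requirements are simultaneously met by the classical Bogovski\u{\i}-based construction equipped with a $\gamma$-dependent cutoff concentrated in a collar of $\Sigma$, adapted to the time-periodic setting as in the steady analog of \cite{DeuringKracmarExtStatNSFApprBddDom2004}. A secondary technical point is that $C_S$ in the Sobolev embedding must be independent of $R$, which follows from the mixed-boundary Sobolev theory developed in the same reference and underlies the compactness statement \eqref{compac}.
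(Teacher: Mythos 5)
Your proposal follows essentially the same route as the paper: split $h$ into the flux-carrying piece $\Phi(t)\sigma$ plus a zero-flux residual, apply a Hopf-type cutoff extension to the latter supported in a fixed collar $\subset B_{R_0}$ (the paper realizes this via a curl potential $\nabla\times(w\psi_\varepsilon)$ rather than Bogovski\u{\i}, a cosmetic difference), and bound the three contributions to $c(\vartheta,\vartheta,\widetilde h)$ exactly as you describe. One small imprecision: the pointwise bound $\snorm{H_1(x)}\leq\gamma/\mathrm{dist}(x,\Sigma)$ is not literally achievable for the whole Hopf extension (the term $\psi_\varepsilon\nabla\times w$ is merely bounded with thin support, not $O(1/d)$ with small constant), but the resulting convective estimate $\snorml{\int\vartheta\cdot\nabla\vartheta\cdot H_1\,\dx}\leq\gamma\norm{\nabla\vartheta}_2^2$ is precisely the conclusion of the Hopf lemma the paper cites, so the outcome stands.
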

\begin{proof} 
Decompose 
\[
h(t,x) = [ h(t,x) -  \Phi(t) \sigma|_{\Sigma}(x) ] + \Phi(t) \sigma|_{\Sigma}(x)  =:  h^{(1)}(t,x) +  h^{(2)}(t,x) , \quad (t,x) \in   {\mathbb T} \times \Sigma.
\]
Then $\int_{\Sigma} h^{(1)}(t,x)  \cdot n(x) \dS = 0$ for all $t \in {\mathbb T}$ and $\nabla \cdot \sigma =0$ in $\Omega$. 
 
For fixed $R_0 >0$ such that $\partial\Omega\subset B_{R_0}$,
we can find (see \cite[Lemma IX.4.1]{GaldiBook2011} and \cite[Lemma 3.3]{TOkabe2011}) $w:{\mathbb T} \times \Omega_{R_0} \to {\mathbb R}^3$ such that
 $$
 \left\{
 \begin{aligned}
 \nabla \times w &= h^{(1)} &&\text{ on }  {\mathbb T} \times \Sigma, \\
  \nabla \times w &= 0 &&\text{ on }  {\mathbb T} \times \partial B_{R_0},\\
  w &= 0 &&\text{ on }  {\mathbb T} \times \partial B_{R_0}, 
 \end{aligned}
 \right.
$$
and 
$$
\| w(t,\cdot) \|_{2,2,\Omega_{R_0}} \leq C(\Omega_{R_0}) \|  h^{(1)}(t,\cdot)\|_{\sfrac{1}{2}, 2 ,\Sigma}, \quad t \in {\mathbb T},
$$
so that $w \in H^1({\mathbb T};H^2(\Omega_{R_0})^3)$ along with the estimate 
$$
\| w \|_{H^1({\mathbb T};H^2(\Omega_{R_0})^3)} \leq C(\Omega_{R_0}) \| h^{(1)}(t,\cdot)\|_{H^1({\mathbb T};H^{\sfrac{1}{2}}(\Sigma)^3)}.
$$
Let $0 < \varepsilon <1$ and $\Psi_\varepsilon \in C^\infty(\R;\R)$
be such that $\Psi_\varepsilon(\theta) = 1$ for $\theta <  \frac{\exp(-2/\varepsilon)}{2}$, $\Psi_\varepsilon(\theta) = 0$ for $\theta \geq 2 \exp(-1/\varepsilon)$, $|\Psi_\varepsilon(\theta)| \leq 1$ and $|\Psi'_\varepsilon(\theta)| \leq \varepsilon/ \theta $, for all $\theta >0$. Define $d(x)$ as the distance of a point $x \in \Omega_{R_0}$ to the boundary $\partial \Omega_{R_0}$ and let $\rho(x)$ be the corresponding regularized distance (in the sense of Stein). Using these, define the cut-off function for the domain $\Omega_{R_0}$ 
$$
\psi_\varepsilon(x) := \Psi_\varepsilon(\rho(x)),
$$ 
and extend it by 1 to the exterior domain $\Omega$. The extension satisfies (see \cite[Lemma III.6.2]{GaldiBook2011} and \cite[Lemma 3.2]{TOkabe2011})
\[
\psi_\varepsilon(x)= \begin{cases}
1 & \text{if} \quad d(x) < \frac{\exp(-2/\varepsilon)}{2 \kappa_1},\\
0 & \text{if} \quad d(x) \geq 2 \exp(-1/\varepsilon),
\end{cases}
\]
and 
\[
\nabla \psi_\varepsilon(x) = \Psi'_\varepsilon(\rho(x)) \nabla \rho(x),   \qquad | \nabla \psi_\varepsilon(x) | \leq \frac{\varepsilon \kappa_2}{d(x)},
\]
where $\kappa_1$ and $\kappa_2$ are positive constants independent of the domain.

Define
\[
\begin{aligned}
 \widetilde{h}(t,x)  & =  \widetilde{h^{(1)} } (t,x)   +  \widetilde{h^{(2)} }(t,x)    = \nabla \times (w(t,x)\psi_\varepsilon(x))  + \Phi(t) \sigma(x) 
 \\
 & = \nabla \psi_\varepsilon(x) \times w(t,x) +  \psi_\varepsilon(x) \nabla \times w(t,x)   + \Phi(t) \sigma(x) , \quad (t,x)  \in {\mathbb T} \times \Omega,
 \end{aligned}
\]
where $w$ is extended to 0 outside ${\mathbb T} \times B_{R_0}$. Clearly, the function $\widetilde{h}$ is divergence free. 

Now assume $R > R_0$. Then, for sufficiently small $\varepsilon$, following
 \cite[Lemma X.4.2]{GaldiBook2011} or \cite[Lemma 3.3]{TOkabe2011}, we can estimate
\[
\begin{aligned} 
 | c(\vartheta,\vartheta,\tilde{h}) |  
&= \left| \int_{\Omega_R} \vartheta \cdot \nabla \vartheta \cdot \tilde{h} \, \dx +  \frac{\Phi(t)}{8 \pi R^2} \| \vartheta \cdot n \|^2_{2, \partial B_R} \right|   \\ 
& \leq  \left|  \int_{\Omega_R} \vartheta \cdot \nabla \vartheta \cdot \nabla \times (w \psi_\varepsilon)  \, \dx \right|  \\
& \quad +  | \Phi(t) | \left| \int_{\Omega_R} \vartheta \cdot \nabla \vartheta  \cdot   \sigma \, \dx \right|  +   \frac{|\Phi(t)|}{8 \pi R^2} \| \vartheta \cdot n \|^2_{2, \partial B_R} \, \\ 
& \leq  \gamma \| \nabla  \vartheta  \|^2_{2,\Omega_R}  + C_S \|  \Phi  \|_{\infty,{\mathbb T}}  \| \sigma \|_{3,\Omega_R}   \| \nabla  \vartheta  \|^2_{2,\Omega_R}  + \frac{\|  \Phi  \|_{\infty,{\mathbb T}} }{8 \pi R^2}  \| \vartheta \|^2_{2, \partial B_R} ,
 \end{aligned}
\]
where $C_S$ is a constant related with the Sobolev embedding in $\Omega$, and $ \vartheta  \in L^2({\mathbb T};V_R)$. 
\end{proof}

Taking into account the regularity of the external force used to solve the exterior problem, we can assume that $f \in L^2({\mathbb T}\times \Omega_R)^3$ in in \eqref{eq:nstp.pert}--\eqref{abc}. Regarding existence and uniqueness of weak solution for \eqref{eq:nstp.pert}--\eqref{abc}, we fix 
\[ 
0 < \gamma < \sfrac{1}{2} - \|  \Phi  \|_{\infty,{\mathbb T}} \left( C_S \| \sigma \|_{3,\Omega_R}  +  \frac{1}{8 \pi  R } \right)
\] 
and a solenoidal extension $\tilde{h} \in H^1({\mathbb T};H^1(\Omega)^3)$ given by Lemma \ref{div}. Then, we will seek the velocity field for system \eqref{eq:nstp.pert}  in the form $v := u_R = \vartheta + \tilde{h}$ where $\vartheta \in  L^2({\mathbb T};V_R)\cap L^\infty({\mathbb T};H_R)$.
The velocity $\vartheta$ and an associated pressure $\vpres$ should satisfy
\[
\left\{
\begin{aligned}
&  \frac{\mathrm d}{\mathrm dt}\int_{\Omega_R}\vartheta\cdot\Psi\,\dx 
+  \int_{\Omega_R} \nabla \vartheta : \nabla \Psi  \, \dx - \int_{\Omega_R} \zeta \cdot \nabla \vartheta \cdot  \Psi \, \dx  - \int_{\Omega_R} \vpres \nabla \cdot \Psi  \, \dx  \\  
& \qquad  \quad +  \int_{\Omega_R} (\vartheta \cdot \nabla) \tilde{h} \cdot \Psi \, \dx +  \int_\Omega (\tilde{h} \cdot \nabla) \vartheta \cdot \Psi \, \dx +  \int_{\Omega_R} (\vartheta \cdot \nabla) \vartheta \cdot \Psi \, \dx   \\ 
&  \qquad  \quad + \int_{\partial B_R} \frac 1R \left(1 + \wakefct(x) \right) \vartheta \cdot \Psi  \, \dS(x) - \int_{\partial B_R}  \frac 12 \left(\vartheta \cdot \frac{x}{R}\right) \tilde{h}  \cdot \Psi  \, \dS(x)  \\
& \qquad  \quad - \int_{\partial B_R}  \frac 12 \left(\tilde{h} \cdot \frac{x}{R}\right) \vartheta  \cdot \Psi  \, \dS(x)  - \int_{\partial B_R}  \frac 12 \left(\vartheta \cdot \frac{x}{R}\right) \vartheta  \cdot \Psi  \, \dS(x) \\
& \qquad  = \int_{\Omega_R} f \cdot  \Psi \, \dx - \int_{\Omega_R}  \partial_t \tilde{h} \cdot  \Psi \, \dx -  \int_\Omega \nabla \tilde{h} : \nabla \Psi  \, \dx    \\
& \qquad  \quad  +  \int_{\Omega_R} \zeta \cdot \nabla  \tilde{h}  \cdot  \Psi  \, \dx - \int_{\partial B_R} \frac 1R \left(1 + \wakefct(x) \right) \tilde{h}  \cdot \Psi  \, \dS(x) \\
& \qquad  \quad - \int_{\Omega_R}  (\tilde{h} \cdot \nabla) \tilde{h} \cdot \Psi \, \dx  + \int_{\partial B_R}  \frac 12 \left(\tilde{h} \cdot \frac{x}{R}\right) \tilde{h}  \cdot \Psi  \, \dS(x),  \quad  \forall  \Psi   \in W_R, \\
& \int_{\Omega_R} (\nabla \cdot \vartheta ) \phi \,  \dx   = 0,  \quad  \forall \phi \in L^2(\Omega_R),
\end{aligned}
\right.
\]
in the sense of distributions in ${\mathbb T}$.

It is convenient to recall \eqref{defc} and  introduce additional notations
\begin{equation}
\label{defab}
\begin{aligned}
   a(v,w)& :=  \int_{\Omega_R}  \nabla v : \nabla w \, \dx   -  \int_{\Omega_R}  \zeta \cdot \nabla v \cdot w \, \dx + \int_{\partial B_R}  \frac 1R \left(1 + \wakefct(x) \right) v \cdot w \, \dS(x),   \\
   b(v,p) & :=   - \int_{\Omega_R} ( \nabla \cdot v ) p \, \dx,   
\end{aligned}
\end{equation}
so that the above system for $(\vartheta,\vpres)$ can be reformulated in a more concise manner as 
\begin{equation}
\left\{
\begin{aligned}
 \left\langle \partial_t \vartheta, \Psi \right\rangle 
 & + a(\vartheta,\Psi) + b(\Psi,\vpres) + c(\vartheta,\tilde{h},\Psi)  + c(\tilde{h},\vartheta,\Psi)  + c(\vartheta,\vartheta,\Psi) 
 \\
& \quad 
= ( f, \Psi )_{\Omega_R} - \big( \partial_t \tilde{h}, \Psi  \big)_{\Omega_R} - a(\tilde{h},\Psi) -  c(\tilde{h},\tilde{h},\Psi) ,  \quad  \forall \Psi \in W_R, \\
   b(\vartheta,\phi) &= 0,  \quad  \forall \phi \in L^2(\Omega_R)
\end{aligned}
\right.
\label{wsolR}
\end{equation}
in $\torus$.
Moreover, we introduce a different inner product on the space $H^1(\Omega_R)^3$, namely,
\begin{equation}
\label{eq:innerproduct.zeta}
\begin{aligned}
(v,w)_{(R,|\zeta|)} := & \int_{\Omega_R}  \nabla v : \nabla w \, \dx +  \left(  \frac 1R + \frac{|\zeta|}{2}\right) \int_{\partial B_R}  (v \cdot w) \, \dS \\
= & \, 
a(v,w) +  \int_{\Omega_R}  \zeta \cdot \nabla v \cdot w \, \dx - \int_{\partial B_R}  \frac 1R  \frac{(\zeta \cdot x)}{2} (v \cdot w) \, \dS, 
\end{aligned}
\end{equation}
so that $
a(v,v) =  \|  \nabla v \|^2_{2,\Omega_R} + \int_{\partial B_R}  \left(  \frac 1R + \frac{|\zeta|}{2}\right) |v |^2 \, \dS(x) = \| v \|^2_{(R,|\zeta|)}$, for $v \in V_R.$
\begin{thm}
Let $f \in L^2({\mathbb T}\times \Omega_R)^3$ and 
$h \in H^1({\mathbb T};H^{\sfrac{1}{2}}(\partial \Omega)^3)$  
satisfying
\begin{equation}
\label{hipw}
2 \|  \Phi  \|_{\infty,{\mathbb T}} \left( C_S \| \sigma \|_{3,\Omega_R}  +  \frac{1}{8 \pi R } \right)  < 1.
\end{equation}
Then there exist  $v \in L^2({\mathbb T};H^1(\Omega_R)^3)\cap L^\infty({\mathbb T};L^2(\Omega_R)^3)$, $ \vpres_0  \in  L^{\infty}({\mathbb T},L^2(\Omega_R))$, $ \vpres_1 \in L^2({\mathbb T};L^2(\Omega_R))$ and $ \vpres_2 \in L^{4/3}({\mathbb T};L^2(\Omega_R))$ such that, in the sense of distributions in ${\mathbb T}$, it holds
\begin{equation}
\label{eq:weak.form}
\left\{
\begin{aligned}
& \frac{\mathrm{d}}{\dt} \left(  \int_{\Omega_R} v \cdot  \Psi  \, \dx  +  \int_{\Omega_R}  \vpres_0  \nabla \cdot  \Psi  \, \dx  \right) \\
& \,\,+  \int_{\Omega_R} \nabla v : \nabla \Psi  \, \dx  - \int_{\Omega_R} \zeta \cdot \nabla v \cdot  \Psi  \dx + \int_{\Omega_R} (v \cdot \nabla) v \cdot \Psi  \, \dx \\  
& \,\, + \int_{\partial B_R} \frac 1R \left(1 + \wakefct(x) \right) v \cdot \Psi  \, \dS(x)  - \int_{\partial B_R}  \frac 12 \left(v \cdot \frac{x}{R}\right) v  \cdot \Psi \, \dS(x) \\
& \, \, +  \int_{\Omega_R}  (\vpres_1 + \vpres_2)  \nabla \cdot  \Psi  \, \dx  
=   \int_{\Omega_R} f \cdot \Psi \dx ,  \quad  \forall \Psi  \in W_R, 
   \\
 & \int_{\Omega_R} (\nabla \cdot v) \phi \, \dx   = 0,  \quad  \forall \phi \in L^2(\Omega_R),
 \end{aligned}
\right.
\end{equation}
and $\vartheta := v - \tilde{h}$,
where $\tilde h$ from Lemma~\ref{div}, satisfies the energy inequality 
\begin{equation}
\begin{aligned}
& \int_\torus\int_{\Omega_R}|\nabla \vartheta|^2\,\dx\dt
+ \left( \frac{1}{R} + \frac{|\zeta|}{2} \right) \int_{\partial B_R}|\vartheta|^2\,\dS\dt \\
\leq &
 - \int_{\torus} \int_{\Omega_R}  \nabla \tilde{h} : \nabla \vartheta \, \dx  \dt  +  \int_{\torus}  \int_{\Omega_R}  \zeta \cdot \nabla \tilde{h} \cdot \vartheta \, \dx \dt \\ & -  \int_{\torus \times \partial B_R}  \frac 1R \left(1 + \wakefct(x) \right) (\tilde{h} \cdot \vartheta) \, \dS(x) \dt \\
&+ \int_{\torus} \int_{\Omega_R} \left( f  - \partial_t \tilde{h} - v \cdot \nabla \tilde{h} \right) \cdot \vartheta \, \dx \dt +  \frac 12  \int_{\torus \times \partial B_R}  \left(\frac xR \cdot v \right) (\tilde{h} \cdot \vartheta) \, \dS(x) \dt 
\end{aligned}
\label{enin}
\end{equation}
Moreover, if another weak solution $(\tilde{\vvel}, \tilde{\vpres})$ 
with $\tilde{v} \in  H^1(\mathbb T\times\Omega_R)^3$ exists such that 
\begin{equation}
\label{eq:smallness.uniqueness}
\|  \tilde{v} \|_{L^\infty({\mathbb T};H^1(\Omega_R))}
\leq \delta
\end{equation} 
with $\delta >0$ sufficiently small, then $\tilde{v} \equiv v$.
\label{thm:weakR} 
 \end{thm}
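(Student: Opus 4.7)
The plan is to combine a Galerkin approximation with the special basis $\{\Psi_k\}$ from Lemma~\ref{basis} and a finite-dimensional fixed-point argument to obtain time-periodic approximate solutions, then pass to the limit. For the lifted unknown $\vartheta=v-\tilde h$ with $\tilde h$ from Lemma~\ref{div}, seek $\vartheta_n(t,x)=\sum_{k=1}^n c_k^n(t)\Psi_k(x)$ satisfying the first identity in \eqref{wsolR} with test functions $\Psi=\Psi_j$ ($j=1,\dots,n$). Since $\Psi_j\in V_R$, the pressure term $b(\Psi_j,\vpres)$ drops out, and the resulting ODE system for $(c_1^n,\dots,c_n^n)$ has locally Lipschitz right-hand side.

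Testing the ODE against $\vartheta_n$, the cancellation $c(\tilde h+\vartheta_n,\vartheta_n,\vartheta_n)=0$ from \eqref{cigual0} and the estimate \eqref{estdiv} for $c(\vartheta_n,\vartheta_n,\tilde h)$ yield, using \eqref{hipw} and the identification $a(\vartheta_n,\vartheta_n)=\|\vartheta_n\|_{(R,|\zeta|)}^2$ from \eqref{eq:innerproduct.zeta},
\begin{equation*}
\tfrac{1}{2}\tfrac{d}{dt}\|\vartheta_n\|_{2,\Omega_R}^2+\kappa\,\|\vartheta_n\|_{(R,|\zeta|)}^2\le C\bigl(\|f\|_{2,\Omega_R}^2+\|\partial_t\tilde h\|_{2,\Omega_R}^2+\|\tilde h\|_{1,2,\Omega_R}^2\bigr)
\end{equation*}
for some $\kappa>0$. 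Integrating this estimate over $[0,\per]$ and combining it with Gronwall's inequality yields an a priori bound on $\|\vartheta_n(t)\|_{2,\Omega_R}$ uniform in $t$ and in $n$, large enough to form an invariant ball $B_\rho\subset\R^n$ for the Poincaré map $c^n(0)\mapsto c^n(\per)$. This map being continuous, Brouwer's fixed-point theorem yields $\per$-periodic solutions $\vartheta_n$, hence uniform-in-$n$ bounds in $L^\infty(\torus;H_R)\cap L^2(\torus;V_R)$, together with the corresponding bound on $\partial_t\vartheta_n$ in $L^{4/3}(\torus;V_R')$ via duality from the weak formulation.

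Passing to the limit is then standard: weak-$\ast$ limits combined with the Aubin--Lions lemma applied in the framework \eqref{compac} yield strong convergence of $\vartheta_n$ in $L^2(\torus;L^{q_1}(\Omega_R)\oplus L^{q_2}(\partial B_R))$, which is enough to identify the nonlinear terms in the limit, giving the first identity in \eqref{eq:weak.form} restricted to test functions in $V_R$. To recover the pressure, I would apply a time-periodic de~Rham argument: splitting the weak formulation according to the integrability in $t$ of its various contributions, the terms involving $\partial_t v$ produce $\vpres_0\in L^\infty(\torus;L^2(\Omega_R))$, the linear Stokes/Oseen and boundary terms produce $\vpres_1\in L^2(\torus;L^2(\Omega_R))$, and the convective nonlinearity (controlled in $L^{4/3}(\torus;L^2(\Omega_R)^3)$ thanks to $v\in L^\infty_tL^2_x\cap L^2_tL^6_x$ and the trace on $\partial B_R$) produces $\vpres_2\in L^{4/3}(\torus;L^2(\Omega_R))$. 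The energy inequality \eqref{enin} follows by integrating in $t\in\torus$ the identity obtained on the Galerkin level with test function $\vartheta_n$, passing to the weak limit, and using lower semicontinuity of the norms on the left-hand side.

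For conditional uniqueness, let $(\tilde v,\tilde\vpres)$ be another weak solution with $\tilde v\in H^1(\torus\times\Omega_R)^3$ small in the sense \eqref{eq:smallness.uniqueness}, and set $w=v-\tilde v$, which satisfies $w|_\Sigma=0$ and is admissible as test function. Subtracting the equations for $v$ and $\tilde v$, testing with $w$, and using again $c(v,w,w)=0$ together with \eqref{eq:innerproduct.zeta} gives, after integration over $\torus$,
\begin{equation*}
\int_\torus \|w\|_{(R,|\zeta|)}^2\,\dt=-\int_\torus c(w,\tilde v,w)\,\dt,
\end{equation*}
and the right-hand side is bounded by $C\|\tilde v\|_{L^\infty(\torus;H^1(\Omega_R))}\int_\torus\|w\|_{(R,|\zeta|)}^2\,\dt$ via Hölder's inequality, the Sobolev embedding $H^1(\Omega_R)\hookrightarrow L^6(\Omega_R)$ and the trace inequality on $\partial B_R$. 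Choosing $\delta$ small enough so that $C\delta<1$ forces $w\equiv 0$. The principal difficulty I expect is managing the artificial-boundary nonlinearities in a way that preserves the cancellation structure of $c(\cdot,\cdot,\cdot)$ through the Galerkin limit; the definition \eqref{defc} of $c$, together with the basis of Lemma~\ref{basis} that incorporates the boundary inner product $\tfrac{1}{R}(\cdot,\cdot)_{\partial B_R}$, is precisely what makes this go through cleanly.
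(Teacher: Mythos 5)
Your overall architecture --- Galerkin approximation in the special basis of Lemma~\ref{basis}, a priori estimates, Aubin--Lions passage to the limit via the embeddings~\eqref{compac}, de~Rham recovery of the time-split pressure, and conditional uniqueness by a smallness argument --- matches the paper's proof of Theorem~\ref{thm:weakR}. The one genuinely different sub-step is how you produce $\per$-periodic solutions of the finite-dimensional ODE system: you derive a dissipative differential inequality by testing with $\vartheta_n$, absorbing $\|\vartheta_n\|_{2,\Omega_R}^2$ via Poincar\'e, and then apply Brouwer's theorem to the period (Poincar\'e) map. The paper instead linearizes (fixing $\underline\alpha$ and solving $\alpha'+A\alpha=G(\underline\alpha)$), constructs the $\per$-periodic solution Fourier-coefficient-by-coefficient by inverting the block matrices in~\eqref{bloc}, shows that the solution map $\mathcal M:W^{1,\sfrac{4}{3}}(\torus)^M\to W^{1,\sfrac{4}{3}}(\torus)^M$ is compact (via the compact embedding $W^{1,\sfrac43}(\torus)\hookrightarrow C(\torus)$), and invokes Leray--Schauder with the $\lambda$-uniform bound~\eqref{ineqvll0}. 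Both routes are legitimate; yours is the more classical dynamical-systems argument, while the paper's Fourier-series construction makes the $W^{1,\sfrac43}(\torus)$-regularity of the coefficients explicit and is more in the spirit of the time-periodic framework used throughout.

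Two points in your uniqueness step need repair. First, you propose to ``subtract the equations and test with $w=v-\tilde v$,'' but $w$ is \emph{not} admissible in the weak formulation~\eqref{eq:weak.td}: the constructed $v$ only has $\partial_t\vartheta\in L^{\sfrac43}(\torus;V_R')$, not enough time regularity to serve as a test function. The hypothesis $\tilde v\in H^1(\torus\times\Omega_R)^3$ is precisely what makes the paper's argument work: one mimics the proof of Lemma~\ref{lem:relineq}, testing the weak formulation for the rough solution $v$ against the smooth $\tilde v$, employing the weak formulation (rather than integration by parts in space) for $\tilde v$, and using~\eqref{enin} to control $\int\|\vartheta\|^2_{(R,|\zeta|)}$. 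Second, this also explains why your identity $\int_\torus\|w\|^2_{(R,|\zeta|)}\,\dt=-\int_\torus c(w,\tilde v,w)\,\dt$ cannot hold with equality: the constructed solution $v$ satisfies only the energy \emph{inequality}~\eqref{enin}, so the paper obtains $\le$ rather than $=$. The final bound $C\delta\int_\torus\|w\|^2_{(\torus,R)}$ and the conclusion $w\equiv 0$ remain correct once the derivation is fixed.
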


 \begin{proof} We construct a time-periodic weak solution to problem \eqref{wsolR} using the Galerkin method. In order to find the velocity $\vartheta$, let $\{\Psi_i\}_{i \in \mathbb N} \subset V_R$ be the complete orthonormal system in $H_R$ given by Lemma \ref{basis}. For each $M \in {\mathbb N}$, let $H^{(M)}_{R}$ be the linear space generated by $\{\Psi_1,...,\Psi_{M} \}$ endowed with the inner product of $H_R$, and let $V^{(M)}_{R}$ be defined in an analogous way with respect to the inner product of $V_R$.

In a first stage, approximate velocities $\vartheta^{(M)} \in L^\infty({\mathbb T};H^{(M)}_{R}) \cap L^2({\mathbb T};V^{(M)}_{R}) $ will be sought in the form 
\begin{equation}
 \vartheta^{(M)} (t,x)  =  \sum_{i=1}^{M} \alpha_{i}(t) \Psi_i(x), \quad \alpha_{ j} \in W^{1,\sfrac{4}{3}}({\mathbb T}).
 \label{vM}
\end{equation}

In order to determine the $T$-periodic functions $\alpha_{1},...,\alpha_{M}$, let ${\mathcal F} :  {\mathbb T} \times  {\mathbb R}^{M}  \to  {\mathbb R}^{M}$ with components
\[
\begin{aligned}
{\mathcal F}_{m}(t,\alpha)  = & - \sum_{i=1}^{M} \alpha_{ i} \left[ a(\Psi_i,\Psi_m)  + c(\Psi_i,\tilde{h},\Psi_m)  + c(\tilde{h},\Psi_i,\Psi_m)  \right]    \\
  & - \sum_{i,j=1}^{M} \alpha_{ i} \alpha_{ j} c(\Psi_i,\Psi_j,\Psi_m)   \\
& + \big( f - \partial_t \tilde{h}, \Psi_m \big)_{\Omega_R} - a(\tilde{h},\Psi_m) -  c(\tilde{h},\tilde{h},\Psi_m) ,  \quad  m = 1,...,M, 
\end{aligned}
\]
where 
\[
\big( f - \partial_t \tilde{h}, \Psi_m \big)_{\Omega_R}  \in L^2(\mathbb T), \quad \,  a(\tilde{h},\Psi_m) , \, c(\tilde{h},\tilde{h},\Psi_m)  \in  C(\mathbb T), \quad m = 1,...,M.
\]
 Then \eqref{vM}, more specifically  $\alpha = (\alpha_{1},...,\alpha_{M})$, will be obtained as a $T$-periodic solution of the systems of ODEs
 \begin{equation}
\alpha' = {\mathcal F}  (t,\alpha) \text{ in } {\mathbb T}.
\label{coefalfa}
\end{equation}

At this stage, $M \in {\mathbb N}$ is fixed. For a fixed $\underline{\alpha} \in W^{1,\sfrac{4}{3}}({\mathbb T})^M$, consider the linearized problem
\begin{equation}
\alpha' = {\mathcal L}  (t,\alpha;\underline{\alpha}) \text{ in } {\mathbb T}.
\label{coefalfaL}
\end{equation}
where 
\[
\begin{aligned}
{\mathcal L}(\,\cdot,{\cdot}\,;\underline{\alpha}) & :  {\mathbb T} \times  {\mathbb R}^{M}  \to  {\mathbb R}^{M}, \\
{\mathcal L}_{m}(t,\alpha;\underline{\alpha})  & =  - \sum_{i=1}^{M} \alpha_{ i} A_{im} - \sum_{i=1}^{M} \underline{\alpha}_{ i} \left[  c(\Psi_i,\tilde{h},\Psi_m)  + c(\tilde{h},\Psi_i,\Psi_m)  \right]    \\
  &\quad - \sum_{i,j=1}^{M}\underline{ \alpha}_{ i} \underline{\alpha}_{ j} c(\Psi_i,\Psi_j,\Psi_m) + g_m(t),  \qquad  m = 1,...,M, 
\end{aligned}
\]
and
\[
\begin{aligned}
  A_{im} &:=  \, a(\Psi_i,\Psi_m), 
  &  i,m &= 1,...,M,  \\
g_m(t) &:= \big( f - \partial_t \tilde{h}, \Psi_m \big)_{\Omega_R} - a(\tilde{h},\Psi_m) -  c(\tilde{h},\tilde{h},\Psi_m),  
& m &= 1,...,M.
\end{aligned}
\]

In order to  alleviate the presentation, we put 
\[
\psi_0(t):=1, \quad \psi_k^c(t) := \sqrt{2}  \cos \left( \frac{2\pi}{\per} k t \right),\quad \psi_k^s(t) := \sqrt{2}  \sin \left( \frac{2\pi}{\per} k t \right), \quad k \in {\mathbb N},
\]
and recall the orthonormality relations for $\left\{ \psi_0 , \, \psi_k^c, \, \psi_k^s :  k \in \mathbb N  \right\}$ in $L^2(\mathbb T)$.
 A solution for the  system of ODEs \eqref{coefalfaL} can be sought in the form of a Fourier series
\begin{equation}
\alpha_{i}(t) = \alpha_{i 0}\psi_0(t) +  \sum_{k = 1}^\infty \alpha_{i k}^c\psi_k^c(t)  +   \sum_{k =1}^\infty \alpha_{i k}^s \psi_k^s(t)  ,  \quad  i =1,...,M.
\label{defindalfa}
\end{equation}
It is convenient to write  \eqref{coefalfaL} as 
\begin{equation}
\alpha ' + A \alpha =  G(\underline{\alpha}) \text{ in } {\mathbb T}
\label{alphaAg}
\end{equation}
with $A:=(A_{im})_{1 \leq i,m \leq M}$ and $G(\underline{\alpha})=(G_{m}(\underline{\alpha}))_{1 \leq m \leq M}$, where
\[
G_m(\underline{\alpha}) := \, g_m - \sum_{i=1}^{M} \underline{\alpha}_{ i} \left[  c(\Psi_i,\tilde{h},\Psi_m)  + c(\tilde{h},\Psi_i,\Psi_m)  \right]   - \sum_{i,j=1}^{M}\underline{ \alpha}_{ i} \underline{\alpha}_{ j} c(\Psi_i,\Psi_j,\Psi_m) \in L^{\sfrac{4}{3}}(\mathbb T).
\]
Based on \eqref{defindalfa}, we define
\[
\alpha_{0} := \begin{bmatrix}
    \alpha_{10}    \\
    \alpha_{20} \\
    \vdots \\
     \alpha_{M0}
\end{bmatrix}, \quad
\alpha_{k}^s := \begin{bmatrix}
    \alpha_{1k}^s    \\
    \alpha_{2k}^s  \\
    \vdots \\
     \alpha_{Mk}^s
\end{bmatrix}, \quad \alpha_{k}^c : = \begin{bmatrix}
    \alpha_{1k}^c    \\
    \alpha_{2k}^c  \\
    \vdots \\
     \alpha_{Mk}^c
\end{bmatrix}, \quad k \in {\mathbb N}
\]
and 
\[
G_{m0} := \int_{\mathbb T} G_m(t) \psi_0^s(t) \dt ,\quad G_{mk}^s := \int_{\mathbb T} G_m(t) \psi_k^s(t) \dt ,\quad G_{mk}^c:= \int_{\mathbb T} G_m(t) \psi_k^c(t) \dt,
\]
\[
G_{0} := \begin{bmatrix}
    G_{10}    \\
    G_{20} \\
    \vdots \\
    G_{M0}
\end{bmatrix}, \quad
G_{k}^s := \begin{bmatrix}
    G_{1k}^s    \\
    G_{2k}^s  \\
    \vdots \\
     G_{Mk}^s
\end{bmatrix}, \quad G_{k}^c : = \begin{bmatrix}
    G_{1k}^c    \\
    G_{2k}^c  \\
    \vdots \\
     G_{Mk}^c
\end{bmatrix}, \quad k \in {\mathbb N}.
\]
The Fourier coefficients of a solution $\alpha$ to \eqref{alphaAg} can be obtained by solving the sequence of linear systems
\begin{equation}
   A  \alpha_{0}   =  
    G_{0} ,
\qquad \begin{bmatrix}
  A  & - \frac{2\pi}{\per} k \, {\mathbb I}_M   \\
\frac{2\pi}{\per} k \, {\mathbb I}_M & A
\end{bmatrix}  \begin{bmatrix}
    \alpha_{k}^s    \\
    \alpha_{k}^c  
\end{bmatrix} = \begin{bmatrix}
    G_{k}^s    \\
    G_{k}^c  
\end{bmatrix} , \quad k  \in {\mathbb N}.
\label{bloc}
\end{equation}
Here, ${\mathbb I}_M$ is the identity matrix in ${\mathbb R}^{M \times M}$.
Note that the matrix $A \in {\mathbb R}^{M \times M}$ is positive definite since we have
\[
\sum_{i,m=1}^M z_i A_{im} z_m = a(\Psi,\Psi) = \| \nabla \Psi \|^2_{2,\Omega_R} + \int_{\partial B_R}  \left(  \frac{1}{R}  + \frac{|\zeta|}{2}\right) |\Psi|^2 \dS \quad (z \in {\mathbb R}^M, \, \Psi = z_i \Psi_i),
\]
and the block matrices in \eqref{bloc}, defined in terms of $A$ and $\frac{2\pi}{\per} k \, {\mathbb I}_M$, are nonsingular. For each $k  \in {\mathbb N}$,
\[
\begin{aligned}
 \begin{bmatrix}
    \alpha_{k}^s    \\
    \alpha_{k}^c  
\end{bmatrix} = & \begin{bmatrix}
  A  & - \frac{2\pi k}{\per} \, {\mathbb I}_M   \\
\frac{2\pi k}{\per}  \, {\mathbb I}_M & A
\end{bmatrix}^{-1}\begin{bmatrix}
    G_{k}^s    \\
    G_{k}^c  
\end{bmatrix} 
\\
= & \begin{bmatrix}
  \left(A^2 + \frac{4 \pi^2 k^2}{\per^2} {\mathbb I}_M\right)^{-1}A  & \frac{2\pi k}{\per}  \left(A^2 + \frac{4 \pi^2 k^2}{\per^2} {\mathbb I}_M\right)^{-1}    \\
- \frac{2\pi k}{\per}  \left(A^2 + \frac{4 \pi^2 k^2}{\per^2} {\mathbb I}_M\right)^{-1} & \left(A^2 + \frac{4 \pi^2 k^2}{\per^2} {\mathbb I}_M\right)^{-1} A
\end{bmatrix} \begin{bmatrix}
    G_{k}^s    \\
    G_{k}^c  
\end{bmatrix} 
\end{aligned}
\]
and, with $A_k :=  \frac{\per}{2\pi k} A$, for $k \in {\mathbb N}$, we have
\[
  \begin{bmatrix}
    \alpha_{k}^s    \\
    \alpha_{k}^c  
\end{bmatrix}   = \frac{\per}{2 \pi k} \begin{bmatrix}
  (A_k^2 + {\mathbb I}_M)^{-1}A_k    G_{k}^s  +  (A_k^2 +  {\mathbb I}_M)^{-1} G_{k}^c    \\
- (A_k^2 + {\mathbb I}_M)^{-1} G_{k}^s  +  (A_k^2 +  {\mathbb I}_M)^{-1} A_k G_{k}^c 
\end{bmatrix} , \, k \in {\mathbb N}.
\]
For any matrix norm $\norm{\cdot}$, there exists a constant $C>0$ such that $\| (A_k^2 + {\mathbb I}_M)^{-1} \| \leq C$, for all $k > \|A\|$.
By Hausdorff-Young inequality, we have $\{G_{k}^s\}_{k \in {\mathbb N}} ,\{G_{k}^c\}_{k \in {\mathbb N}} \in \ell^{4}({\mathbb N})^M$ and therefore, $\{k \alpha_{k}^s \}_{k \in {\mathbb N}}, \{k \alpha_{k}^c \}_{k \in {\mathbb N}} \in \ell^4({\mathbb N})^M$. 
By H\"{o}lder inequality, we obtain $\{\alpha_{k}^s \}_{k \in {\mathbb N}}, \{\alpha_{k}^c \}_{k \in {\mathbb N}} \in \ell^{\sfrac{4r}{(4+r)}}({\mathbb N})^M$, for all $r>\sfrac{4}{3}$. Thus $\{\alpha_{k}^s \}_{k \in {\mathbb N}}, \{\alpha_{k}^c \}_{k \in {\mathbb N}} \in \ell^{r}({\mathbb N})^M$, for all $1 < r < 4$. This, in turn, yields the existence of $\alpha \in L^2(\mathbb T)^M$ solving~\eqref{coefalfaL}, and from the identity \eqref{alphaAg}, it follows that $\alpha' \in L^{\sfrac{4}{3}}(\mathbb T)^M$.

We can thus consider the mapping 
\[ 
\begin{aligned}
& {\mathcal M} : W^{1,\sfrac{4}{3}}({\mathbb T})^M \to W^{1,\sfrac{4}{3}}({\mathbb T})^M \\
& {\mathcal M}(\underline{\alpha}) = \alpha.
\end{aligned}
\] 
Our aim is to establish existence of a fixed point of $\mathcal M$. 

In order to use the Leray--Schauder fixed-point Theorem, we first show that the solution of the problem
\begin{equation}
\alpha ' + A \alpha =  \lambda G(\alpha) \text{ in } {\mathbb T}
\label{alphaAglam}
\end{equation}
are uniformly bounded with respect to $\lambda \in [0,1]$. By taking the dot product of both sides of equation \eqref{alphaAglam} with $\alpha$, we obtain
\[
\begin{aligned}
\frac{1}{2} \frac{d}{dt} |\alpha|^2 = & - \sum_{i,m=1}^{M} \alpha_{ i} A_{im}  \alpha_m - \lambda \sum_{i,m=1}^{M} \alpha_{ i} \alpha_m \left[  c(\Psi_i,\tilde{h},\Psi_m)  + c(\tilde{h},\Psi_i,\Psi_m)  \right]    \\
  & - \lambda \sum_{i,j,m=1}^{M} \alpha_{ i}  \alpha_{ j} \alpha_m c(\Psi_i,\Psi_j,\Psi_m) + \lambda \sum_{m=1}^{M}  g_m \alpha_m , \quad \lambda \in [0,1].
\end{aligned}
\]
Recalling \eqref{vM} and using the orthonormality conditions that $\left\{ \psi_0 , \, \psi_k^c, \, \psi_k^s :  k \in \mathbb N  \right\}$ and $\{\Psi_1,...,\Psi_{M} \}$ induce in $L^2(\mathbb T,H^{(M)}_{R})$, we get
\begin{equation}
\begin{aligned}
\frac{1}{2} \frac{d}{dt} \| \vartheta^{(M)} \|_{2,\Omega_R}^2  & +   \, a(\vartheta^{(M)},\vartheta^{(M)})  + \lambda  c(\vartheta^{(M)},\tilde{h},\vartheta^{(M)})    \\
   &  +  \,  \lambda c(\tilde{h},\vartheta^{(M)},\vartheta^{(M)}) + \lambda c(\vartheta^{(M)},\vartheta^{(M)},\vartheta^{(M)})   \\
= &\, \, \lambda \big( f  - \partial_t \tilde{h}, \vartheta^{(M)} \big)_{\Omega_R} - \lambda  a(\tilde{h},\vartheta^{(M)}) - \lambda  c(\tilde{h},\tilde{h},\vartheta^{(M)}) .
\end{aligned}
\label{eq:ddttheta.norm}
\end{equation}
Since, by the time-periodicity of $\vartheta^{(M)}$ and by \eqref{cigual0},
it holds
\[
\int_{\mathbb T}  \frac{d}{dt} \| \vartheta^{(M)} \|_{2,\Omega_R}^2 \dt  =  0, \quad c(\tilde{h},\vartheta^{(M)},\vartheta^{(M)}) =   c(\vartheta^{(M)},\vartheta^{(M)},\vartheta^{(M)}) =0,
\] 
and, by direct calculation, 
\[
c(\vartheta^{(M)},\tilde{h},\vartheta^{(M)})  = -  c(\vartheta^{(M)},\vartheta^{(M)},\tilde{h}),
\] 
we obtain
\begin{equation}
\begin{aligned}
\| \vartheta^{(M)}  \|_{({\mathbb T},R)}^2 \leq & \, \int_{\mathbb T} \left( \| \vartheta^{(M)} \|_{(R)}^2  + \frac{|\zeta|}{2}\| \vartheta^{(M)} \|^2_{2,\partial B_R} \right) \dt    \\
= & \,   \int_{\mathbb T}  a( \vartheta^{(M)},  \vartheta^{(M)}) \dt
   \\
 = & \,  \lambda  \int_{\mathbb T}  c(\vartheta^{(M)}, \vartheta^{(M)},\tilde{h}) \dt + \lambda  \int_{\mathbb T} \left( f - \partial_t \tilde{h}, \vartheta^{(M)}  \right)_{\Omega_R} \dt   \\
 & \, - \lambda  \int_{\mathbb T} a(\tilde{h},\vartheta^{(M)} ) \dt - \lambda   \int_{\mathbb T}  c(\tilde{h},\tilde{h},\vartheta^{(M)} ) \dt.
 \end{aligned}
\label{ineqvll00}
\end{equation}
From Lemma  \ref{tdiv}, estimate \eqref{estdiv}, we conclude
\[
\begin{aligned} 
 \lambda  \int_{\mathbb T}  c(\vartheta^{(M)}, \vartheta^{(M)},\tilde{h}) \dt  \leq  \gamma  \| \vartheta^{(M)}  \|_{({\mathbb T},R)}^2  + \|  \Phi  \|_{\infty,{\mathbb T}} \left[ C_S \| \sigma \|_{3,\Omega_R}  +  \frac{1}{8 \pi  R } \right]  \|  \vartheta^{(M)}  \|_{({\mathbb T},R)}^2,
 \end{aligned}
\]
and since $1 - \|  \Phi  \|_{\infty,{\mathbb T}} \left[ C_S \| \sigma \|_{3,\Omega_R}  +  \frac{1}{8 \pi  R } \right]  > \gamma >0$, by estimating the remaining terms in the last equality of \eqref{ineqvll00}, after estimating the remaining terms on the last equality, we arrive at 
\begin{equation}
\| \vartheta^{(M)} \|_{({\mathbb T},R)}  \leq \frac{C (\Omega_R) \left[ \| f \|_{L^2({\mathbb T}\times \Omega)} + (1 + |\zeta|)\| \tilde{h} \|_{H^1({\mathbb T} \times \Omega)}  + \| \tilde{h} \|_{H^1({\mathbb T} \times \Omega)}^2  \right] }{\left(1-\|  \Phi  \|_{\infty,{\mathbb T}} \left[C_S \| \sigma \|_{3,\Omega_R}  +  \frac{1}{8 \pi  R } \right] -\gamma \right)^{\sfrac{1}{2}}}.
\label{ineqvll0}
\end{equation} 
By Poincar\'e inequality and the orthonormality conditions in $L^2(\mathbb T,H^{(M)}_{R})$, this implies
\[
\begin{aligned}
 \|\alpha\|_{L^2({\mathbb T})^M} = & \, \| \vartheta^{(M)} \|_{L^2({\mathbb T}; H_R)}   \\
\leq & \, C_P(\Omega_R) \| \vartheta^{(M)} \|_{L^2({\mathbb T}; V_R)} = \, C_P(\Omega_R)  \| \vartheta^{(M)} \|_{({\mathbb T},R)}     \\
\leq & \, \frac{C_P(\Omega_R)  C(\Omega_R) \left[ \| f \|_{L^2({\mathbb T}\times \Omega)} + (1+|\zeta|)\| \tilde{h} \|_{H^1({\mathbb T} \times \Omega)}  + \| \tilde{h} \|_{H^1({\mathbb T} \times \Omega)} ^2  \right] }{\left( 1-\|  \Phi  \|_{\infty,{\mathbb T}} \left[ C_S \| \sigma \|_{3,\Omega_R}  +  \frac{1}{8 \pi  R } \right] - \gamma \right)^{\sfrac{1}{2}}},
\end{aligned}
\]
where $C_P (\Omega_R)$ is a Poincar\'e constant on $\Omega_R$. Then, going back to \eqref{alphaAglam}, we conclude that $\alpha \in W^{1,\sfrac{4}{3}}({\mathbb T})^M$ and $\alpha'$ is also bounded by the data in $L^{\sfrac{4}{3}}({\mathbb T})^M$.

Now, we show that the mapping $\mathcal M$ is compact. Suppose that the sequence $\{ \underline{\alpha}^{(k)} \}_{k \in \mathbb N} \subset W^{1,\sfrac{4}{3}}({\mathbb T})^M$ is bounded. We have
\begin{equation}
(\alpha^{(k)}  - \alpha^{(\ell)})' + A (\alpha^{(k)} - \alpha^{(\ell)} ) =  G(\underline{\alpha}^{(k)}) - G(\underline{\alpha}^{(\ell)}) \text{ in } {\mathbb T}
\label{alphaAgcomp}
\end{equation}
where, for each $m \in \{1,...,M\}$, 
\begin{equation}
\begin{aligned}
& G_m(\underline{\alpha}^{(k)}) - G_m(\underline{\alpha}^{(\ell)})    \\
= & \,  \sum_{i=1}^{M} ( \underline{\alpha}_i^{(\ell)} - \underline{\alpha}_{ i}^{(k)} ) \left[  c(\Psi_i,\tilde{h},\Psi_m)  + c(\tilde{h},\Psi_i,\Psi_m)  \right]    \\
& + \sum_{i,j=1}^{M}(\underline{ \alpha}_{ i}^{(\ell)} \underline{\alpha}_{ j}^{(\ell)} - \underline{ \alpha}_{ i}^{(k)} \underline{\alpha}_{ j}^{(k)} ) c(\Psi_i,\Psi_j,\Psi_m)   \\
=  & \,  \sum_{i=1}^{M} ( \underline{\alpha}_i^{(\ell)} - \underline{\alpha}_{ i}^{(k)} ) \left[  c(\Psi_i,\tilde{h},\Psi_m)  + c(\tilde{h},\Psi_i,\Psi_m)  \right]    \\
& + \sum_{i,j=1}^{M}\left[ \underline{\alpha}_{ j}^{(\ell)} (\underline{ \alpha}_{ i}^{(\ell)} - \underline{ \alpha}_{ i}^{(k)} )  + \underline{ \alpha}_{ i}^{(k)} (\underline{\alpha}_{ j}^{(\ell)} - \underline{\alpha}_{ j}^{(k)}) \right] c(\Psi_i,\Psi_j,\Psi_m).
\end{aligned}
\label{alfaG}
\end{equation}
The embedding $W^{1,\sfrac{4}{3}}({\mathbb T})^M  \hookrightarrow  C({\mathbb T})^M$ is compact, hence $\{ \underline{\alpha}^{(k)} \}_{k \in \mathbb N}$ contains a subsequence $\{ \underline{\alpha}^{(k')} \}_{k' \in \mathbb N}$ that converges in $C({\mathbb T})^M$. Let
\begin{equation}
 \underline{\vartheta}^{(M,k')} (t,x) : =  \sum_{i=1}^{M} \underline{\alpha}^{(k')}_{i}(t) \Psi_i(x), \quad \vartheta^{(M,k')} (t,x) : =  \sum_{i=1}^{M} \alpha^{(k')}_{i}(t) \Psi_i(x).
 \label{vMk}
\end{equation}
The sequence $\{ \underline{\vartheta}^{(M,k')} \}_{k' \in \mathbb N}$ converges in $L^2({\mathbb T}; V_R^{(M)})$ and therefore it is a Cauchy sequence in $L^2({\mathbb T}; V_R)$. Taking the dot product of both sides of \eqref{alphaAgcomp}  with $\alpha^{(k)} - \alpha^{(\ell)} $ and recalling \eqref{alfaG},  we get 
\[
\begin{aligned}
 \|  \vartheta^{(M,k')} -  \vartheta^{(M,\ell')} & \|^2_{L^2({\mathbb T}; V_R)}  
\leq \int_{\mathbb T}  a\big(\vartheta^{(M,k')} -  \vartheta^{(M,\ell')},\vartheta^{(M,k')} -  \vartheta^{(M,\ell')}\big) \dt
  \\
\qquad\leq  C (\Omega_R,M) &\Big( \| \tilde{h} \|_{H^1({\mathbb T} \times \Omega)}   \|  \vartheta^{(M,k')} -  \vartheta^{(M,\ell')}  \|_{L^2({\mathbb T}; V_R)} 
  \|  \underline{\vartheta}^{(M,k')} -  \underline{\vartheta}^{(M,\ell')}  \|_{L^2({\mathbb T}; V_R)}  
  \\ 
 &\quad +  \big(\|  \underline\vartheta^{(M,k')}\|_{L^\infty(\torus;V_R)} + \|  \underline\vartheta^{(M,\ell')}\|_{L^\infty(\torus;V_R)}   \big)
  \\
  &\qquad\quad\times 
   \|  \vartheta^{(M,k')} - \vartheta^{(M,\ell')}  \|_{L^2({\mathbb T}; V_R)} 
  \|  \underline{\vartheta}^{(M,k')} -  \underline{\vartheta}^{(M,\ell')}  \|_{L^2({\mathbb T}; V_R)}  \Big).
\end{aligned}
\] 
As in previous estimates, by Poincar\'e inequality and the orthonormality conditions in $L^2(\mathbb T,H^{(M)}_{R})$, we get
\[
\begin{aligned}
 \|\alpha^{(k')}  - \alpha^{(\ell')}\|_{L^2({\mathbb T})^M} = & \, \|  \vartheta^{(M,k')} -  \vartheta^{(M,\ell')} \|_{L^2({\mathbb T}; H_R)}   \\
\leq & \, C_P(\Omega_R) \|  \vartheta^{(M,k')} -  \vartheta^{(M,\ell')}  \|_{L^2({\mathbb T}; V_R)},
\end{aligned}
\]
and now we use the fact that $\{\underline\vartheta^{(M,k')} \}_{k' \in \mathbb N}$ is a Cauchy sequence in $L^2({\mathbb T}; V_R)$ to conclude that $\{ \alpha^{(k')} \}_{k' \in \mathbb N}$ is a Cauchy sequence in $L^2({\mathbb T})^M$. From \eqref{alphaAgcomp} and the previous estimates, we also get 
\[
\|(\alpha^{(k')})'  - (\alpha^{(\ell')})'\|_{L^{\sfrac{4}{3}}({\mathbb T})^M} \leq \|A (\alpha^{(k')} - \alpha^{(\ell')} )\|_{L^{\sfrac{4}{3}}({\mathbb T})^M} + \|G(\underline{\alpha}^{(k')}) - G(\underline{\alpha}^{(\ell')}) \|_{L^{\sfrac{4}{3}}({\mathbb T})^M}.
\]
Using the strong convergence of 
$\{ \underline{\alpha}^{(k')} \}_{k' \in \mathbb N}$ 
in $C({\mathbb T})^M$,
we conclude that 
$\mathcal M$ maps bounded sequences into relatively compact ones. 
In conclusion, the Leray--Schauder Theorem shows that the mapping $\mathcal M$ has a fixed point.

We thus solved  \eqref{coefalfa} with $M$ fixed and obtained an approximate solution \eqref{vM} which satisfies \eqref{ineqvll0}. Now, we derive additional estimates for the sequence $\{ \vartheta^{(M)} \}_{M \in {\mathbb N}}$. Actually, $ \vartheta^{(M)} \in C({\mathbb T};V_R)$, and from  the estimate \eqref{ineqvll0} and the mean value theorem for continuous functions, 
we conclude the existence of $\overline{t} \in (0,\mathcal T)$ such that
\begin{equation}
\begin{aligned}
\| \vartheta^{(M)} (\overline{t}) \|^2_{(R)} & = \int_{{\mathbb T}} \| \vartheta^{(M)} \|^2_{(R)} \dt = \| \vartheta^{(M)} \|^2_{({\mathbb T},R)}    \\
& \leq \frac{C(\Omega_R)^2 \left[ \| f \|_{L^2({\mathbb T}\times \Omega)} + (1+|\zeta|)\| \tilde{h} \|_{H^1({\mathbb T} \times \Omega)}  + \| \tilde{h} \|_{H^1({\mathbb T} \times \Omega)}^2  \right]^2}{1-\|  \Phi  \|_{\infty,{\mathbb T}} \left[C_S \| \sigma \|_{3,\Omega_R}  +  \frac{1}{8 \pi  R } \right] -\gamma }.
\end{aligned}
\label{eq:thetaM.tbarest}
\end{equation}
From Poincar\'e inequality, we further get
\begin{equation}
\| \vartheta^{(M)} (\overline{t}) \|_{2,\Omega_R}  \leq C_P (\Omega_R) \| \vartheta^{(M)} (\overline{t}) \|_{(R)}.
\label{initialt}
\end{equation}
Now, on the time interval $[\overline{t}, \overline{t} + \per]$, we consider \eqref{eq:ddttheta.norm} (with $\lambda=1$).
Taking into account~\eqref{cigual0}, we have
\begin{equation}
\begin{aligned}
& \frac 12 \frac{d}{dt} \| \vartheta^{(M)} \|_{2,\Omega_R}^2   +   a(\vartheta^{(M)},\vartheta^{(M)})     \\
&\quad = c(\vartheta^{(M)},\vartheta^{(M)},\tilde{h})  +  \big( f  - \partial_t \tilde{h}, \vartheta^{(M)} \big)_{\Omega_R} -  a(\tilde{h},\vartheta^{(M)}) -  c(\tilde{h},\tilde{h},\vartheta^{(M)}),
\end{aligned}
\label{ttoT}
\end{equation}
where
\[
a(\vartheta^{(M)},\vartheta^{(M)})  =  \| \nabla \vartheta^{(M)} \|^2_{2,\Omega_R} + \int_{\partial B_R} \left( \frac{1}{R}  + \frac{|\zeta|}{2}\right) |\vartheta^{(M)}|^2 \dS \geq \| \vartheta^{(M)} \|_{(R)}^2,
\]
and
\[
\begin{aligned}
\big| \big( f  - \partial_t \tilde{h}, \vartheta^{(M)} \big)_{\Omega_R} \big| & \leq \frac 14  \| \vartheta^{(M)} \|^2_{2,\Omega_R}  + 2 \|  f \|^2_{2,\Omega_R} + 2 \| \partial_t \tilde{h}\|^2_{2,\Omega_R},  \\
| a(\tilde{h},\vartheta^{(M)}) | & \leq  \, C(\Omega_R)(1 + |\zeta|) \| \tilde{h}\|_{1,2,\Omega_R} \| \vartheta^{(M)} \|_{(R)} + |\zeta| \| \tilde{h}\|_{1,2,\Omega_R} \| \vartheta^{(M)} \|_{2,\Omega_R} \\
& \leq  \,  \frac 14 \| \vartheta^{(M)} \|_{(R)}^2  + \frac 14 \| \vartheta^{(M)} \|^2_{2,\Omega_R} + C(\Omega_R)(1 + |\zeta|)^2 \| \tilde{h}\|^2_{1,2,\Omega_R} , \\
|c(\tilde{h},\tilde{h},\vartheta^{(M)})| &  \leq \frac 14 \| \vartheta^{(M)} \|^2_{(R)} + C(\Omega_R) \| \tilde{h} \|^4_{1,2,\Omega_R} .
\end{aligned}
\]
We estimate the term $ c(\vartheta^{(M)},\vartheta^{(M)},\tilde{h})$ using Lemma \ref{tdiv}, apply the preceding estimates,
and use assumption \eqref{hipw} to deduce
\[
\begin{aligned}
 \frac{d}{dt} \| \vartheta^{(M)} \|_{2,\Omega_R}^2   + &  \, \left[1 - 2 \|  \Phi  \|_{\infty,{\mathbb T}} \left( C_S \| \sigma \|_{3,\Omega_R}  +  \frac{1}{8 \pi  R } \right) - 2 \gamma \right] \| \vartheta^{(M)} \|^2_{(R)}    \\
\leq & \,  
 \| \vartheta^{(M)}  \|_{2,\Omega_R}^2  +  C_2(\Omega_R) (1 + |\zeta|)^2 \| \tilde{h}\|^2_{1,2,\Omega_R}  +  C_3(\Omega_R) \| \tilde{h} \|^4_{1,2,\Omega_R} \\
& \, + 4 \|  f \|^2_{2,\Omega_R} + 4 \| \partial_t \tilde{h}\|^2_{2,\Omega_R} ,
\end{aligned}
\]
which we combine with \eqref{initialt} and the estimate~\eqref{eq:thetaM.tbarest} for $\| \vartheta^{(M)} (\overline{t}) \|_{(R)}$. The Gr\"onwall Lemma and the time-periodicity of $\vartheta^{(M)}$ yield
\begin{equation}\label{eq:thetaM.Linfty}
\| \vartheta^{(M)}  \|_{L^\infty(\mathbb T;L^{2}(\Omega_R))} \leq  C\left(\Omega_R, \mathcal T, |\zeta|,\| f \|_{L^2(\mathbb T;L^{2}(\Omega_R))},\| \tilde{h} \|_{H^1({\mathbb T}; H^1(\Omega))}\right).
\end{equation}

An estimate for the time derivative of $\vartheta^{(M)}$ can be obtained as follows: for each $M \in {\mathbb N}$, let ${\mathbb P}_{M}$ be the orthogonal projector onto $\textrm{span}\{\Psi_1,...,\Psi_{M}\}$ in $V_R$. Recall that, by Lemma \ref{basis}, we have, for each $\Phi  \in V_R$,
\[
\|{\mathbb P}_{M} \Phi  \|_{(R)} \leq \| \Phi  \|_{(R)}, \qquad
{\mathbb P}_{M} \Phi  \to \Phi  \textrm{ in } V_R \text{ as } M \to \infty.
\]
Since $\{\Psi_k\}_{k \in \mathbb N}$ is a complete orthonormal system  in $H_R$, we have
$$
 \int_{\Omega_R}  \partial_t  \vartheta^{(M)} \cdot  \Phi   \, \dx
  =  \int_{\Omega_R}  \partial_t  \vartheta^{(M)} \cdot  ( {\mathbb P}_{M} \Phi ) \, \dx, \quad \forall \Phi \in H_R 
$$
and therefore
\[
\begin{aligned}
 & \big( \partial_t \vartheta^{(M)}, \Phi  \big)_{\Omega_R}   + a(\vartheta^{(M)} , {\mathbb P}_{M} \Phi )  + c(\vartheta^{(M)} ,\tilde{h}, {\mathbb P}_{M} \Phi )    \\
  &\quad + c(\tilde{h},\vartheta^{(M)} , {\mathbb P}_{M} \Phi )  + c(\vartheta^{(M)} , \vartheta^{(M)} , {\mathbb P}_{M} \Phi )   \\
  &\qquad =  \left( f, {\mathbb P}_{M} \Phi \right)_{\Omega_R}  - \big(\partial_t \tilde{h},  {\mathbb P}_{M} \Phi  \big)_{\Omega_R}  - a(\tilde{h}, {\mathbb P}_{M} \Phi ) -  c(\tilde{h},\tilde{h}, {\mathbb P}_{M} \Phi  ) ,  \quad \forall \Phi  \in V_R ,
 \end{aligned}
\]
which, by setting 
\[
\begin{aligned}
&\langle {\mathcal A}v,w \rangle_{V_R',V_R}  :=    a(v,w), \\
&\langle {\mathcal C}(u,v),w \rangle_{V_R',V_R}  := c(u,v,w),  
\end{aligned}
\]
can be written as 
\begin{multline*}
 \langle \partial_t \vartheta^{(M)} , \Psi  \rangle_{V_R',V_R} = - \langle {\mathcal A} v^{(M)} + {\mathcal C}(v^{(M)} ,v^{(M)} ), {\mathbb P}_{M} \Psi  \rangle_{V_R',V_R}   \\ + \langle f - \partial_t \tilde{h},{\mathbb P}_{M} \Psi  \rangle_{V_R',V_R} , \quad \forall \Psi  \in V_R, \mbox{ in } {\mathbb T},
\end{multline*}
where $v^{(M)}=\vartheta^{(M)}+\tilde h$.
By interpolation, from~\eqref{ineqvll0} and~\eqref{eq:thetaM.Linfty} we deduce that $\{v^{(M)}\}_{M\in\N}\subset L^4({\mathbb T};L^3(\Omega_R))$ 
is uniformly bounded. Since 
\begin{equation}
\begin{aligned}
\|{\mathcal A}(v)\|_{V_R'} 
&\leq C(\Omega_R)(1 + |\zeta|) \| v\|_{1,2,\Omega_R}, \qquad \forall v \in H^1(\Omega_R),  
\\
\|{\mathcal C}(v,v) \|_{V_R'} 
&\leq C(\Omega_R) 
\| v\|_{3;\Omega}\| v\|_{1,2;\Omega} ,
\qquad \forall v \in H^1(\Omega_R),
\\
\| \partial_t \vartheta^{(M)}  \|_{V_R'} 
&\leq \|{\mathcal A}(v^{(M)} )\|_{V_R'}  +  \|{\mathcal C}(v^{(M)},v^{(M)}  )\|_{V_R'}  +  \| f \|_{2,\Omega_R} + \| \partial_t \tilde{h} \|_{2,\Omega_R},
\end{aligned}
\label{normaAC}
\end{equation}
we conclude that $\{ \partial_t \vartheta^{(M)}  \}_{M \in {\mathbb N}}$ remains in a bounded set of $L^{4/3}({\mathbb T};V_R')$. 
Here, we used that, by integration by parts and Sobolev embeddings,
we have
\[
c(u,v,w)
=\frac{1}{2}\int_{\Omega_R} u\cdot \nabla v\cdot w\,\dx 
-\frac{1}{2}\int_{\Omega_R} u\cdot \nabla w\cdot v\,\dx 
\leq C(\Omega_R) \|u\|_{3;\Omega}\|v\|_{1,2;\Omega}\|w\|_{1,2;\Omega}
\]
if $u\in V_R$.
A combination with the above uniform 
estimates~\eqref{ineqvll0} and~\eqref{eq:thetaM.Linfty}
enable us to assert the existence of an element $\vartheta \in L^\infty({\mathbb T};H_R)\cap L^2({\mathbb T};V_R)$ with 
$\partial_t \vartheta \in 
L^{4/3}({\mathbb T};V_R')$,
and a sub-sequence $\{\vartheta^{(M')} \}$ of $\{ \vartheta^{(M)} \}_{M \in {\mathbb N}}$ such that
$$
\begin{aligned}
  \nabla \vartheta^{(M')} &\to \nabla \vartheta &&\textrm{ in } L^2({\mathbb T};L^2(\Omega_R)) \textrm{ weakly}, \\
  \vartheta^{(M')} &\to \vartheta &&\textrm{ in } L^\infty({\mathbb T};H_R) \textrm{ weakly-*}, \\
  \partial_t \vartheta^{(M')} &\to \partial_t \vartheta &&\textrm{ in }  L^{4/3}({\mathbb T};V_R') \textrm{ weakly}, \\
  \vartheta^{(M')} &\to \vartheta &&\textrm{ in } L^2({\mathbb T};L^{q_1}(\Omega)) \textrm{ strongly}, \quad 1 \leq q_1 < 6, \\
  \vartheta^{(M')}|_{\partial B_R} &\to \vartheta|_{\partial B_R}  &&\textrm{ in } L^2({\mathbb T};L^{q_2}(\partial B_R)) \textrm{ strongly}, \quad 1 \leq q_2 <  4,
\end{aligned}
$$
where the latter convergences follow from the Aubin--Lions Theorem and the embeddings \eqref{compac}. 
Passing to the limit $M' \to \infty$ in~\eqref{coefalfa}, with standard arguments, we find
that $v\coloneqq\vartheta+\tilde h$ satisfies
\begin{multline}
\int_{{\mathbb T}} (v(t),\psi'(t) \Psi  )_{\Omega_R}  \dt =  \int_{{\mathbb T}}  a(v(t),\psi(t) \Psi ) \dt  \label{weform}
 \\ 
+  \int_{{\mathbb T}}  c(v(t),v(t),\psi(t) \Psi ) \dt -  \int_{{\mathbb T}}  \left( f(t),\psi(t) \Psi  \right)_{\Omega_R} \dt, \, \forall \Psi  \in V_R, \, \forall \psi \in {\mathcal D}({\mathbb T}).
\end{multline}
Since the function spaces
 \[
\left\{\Phi :  \mathbb T \times  \Omega_R \to {\mathbb R}^3  :  \Phi (t,x) =  \psi (t)   \Psi (x), \,  \psi  \in {\mathcal D}(\mathbb T),  \,  \Psi   \in W_R \right\}, 
 \]
 \[
 \left\{ \eta:  \mathbb T \times  \Omega_R \to {\mathbb R} : \eta(t,x) = \psi (t)   \phi (x), \, \psi  \in {\mathcal D}(\mathbb T),  \,  \phi  \in L^2(\Omega_R) \right\}
 \]
are dense in $H^1(\mathbb T; W_R)$ and in $L^2(\mathbb T; L^2(\Omega_R))$, respectively, we obtain an equivalent definition of weak solution (in the velocity variable):
\begin{equation}
\label{eq:weak.td}
\begin{aligned}
&\int_{{\mathbb T}} (v(t),\Phi'(t) )_{\Omega_R}  \,\dt 
 =  \int_{{\mathbb T}}  a(v(t),\Phi(t)) \,\dt  
 \\
& \qquad +  \int_{{\mathbb T}}  c(v(t),v(t),\Phi(t) ) \,\dt -  \int_{{\mathbb T}}  \left( f(t),\Phi(t) \right)_{\Omega_R} \,\dt, \quad \forall \Phi  \in H^1({\mathbb T};V_R).
\end{aligned}
\end{equation}

The energy inequality~\eqref{enin} for $\vartheta$ is obtained from \eqref{ttoT} and integration over $\torus$, as
\[
\begin{aligned}
\int_{\torus} \| \vartheta^{(M')}\|^2_{(R,|\zeta|)}\, \dt = \, & \,  \int_{\torus} c(\vartheta^{(M')},\vartheta^{(M')},\tilde{h}) \,\dt +  \int_{\torus} \big( f  - \partial_t \tilde{h}, \vartheta^{(M')} \big)_{\Omega_R} \dt \\ 
& \, -  \, \int_{\torus} a(\tilde{h},\vartheta^{(M')})\, \dt -  \int_{\torus}  c(\tilde{h},\tilde{h},\vartheta^{(M')}) \,\dt,
\end{aligned}
\]
where we use the above convergence results for the subsequence $\{ \vartheta^{(M')} \}_{M' \in \mathbb N}$ to obtain, in particular,
\[
 \int_{\torus} c(\vartheta^{(M')},\vartheta^{(M')},\tilde{h}) \dt  \to  \int_{\torus} c(\vartheta,\vartheta,\tilde{h}) \dt, 
\]
and, by the lower semicontinuity of the norm,
\[
\begin{aligned}
\int_{\torus}\| \vartheta \|^2_{(R,|\zeta|)} \dt \leq  & \liminf_{M' \to \infty} \int_{\torus} \| \vartheta^{(M')}\|^2_{(R,|\zeta|)} \dt  \\ 
= & \int_{\torus} c(\vartheta,\vartheta,\tilde{h}) \dt +  \int_{\torus} \big( f  - \partial_t \tilde{h}, \vartheta \big)_{\Omega_R} \dt - \int_{\torus} a(\tilde{h},\vartheta) \dt -  \int_{\torus}  c(\tilde{h},\tilde{h},\vartheta) \dt \\
= & - \int_{\torus} c(\vartheta,\tilde{h},\vartheta) \dt +  \int_{\torus} \big( f  - \partial_t \tilde{h}, \vartheta \big)_{\Omega_R} \dt - \int_{\torus} a(\tilde{h},\vartheta) \dt -  \int_{\torus}  c(\tilde{h},\tilde{h},\vartheta) \dt \\
= & - \int_{\torus} c(v,\tilde{h},\vartheta) \dt +  \int_{\torus} \big( f  - \partial_t \tilde{h}, \vartheta \big)_{\Omega_R} \dt - \int_{\torus} a(\tilde{h},\vartheta) \dt,
\end{aligned}
\]
which is~\eqref{enin}.

We have solved problem \eqref{weform} in ${\mathcal D}'({\mathbb T};V'_R)$ for the velocity field. Our aim now  is to recover the pressure. For this purpose, we follow the ideas of \cite{JNeustupa}, and define ${\mathcal F} \in {\mathcal D}'({\mathbb T};W'_R)$ as follows:
\begin{multline*}
  \langle {\mathcal F}(\psi),\Psi  \rangle_{\Omega_R} := - \int_{{\mathbb T}} \left( v(t),\Psi \right)_{\Omega_R} \psi'(t) \, \dt  +  \int_{{\mathbb T}}  \langle {\mathcal A} v(t) + {\mathcal C}(v(t) ,v(t)), \Psi  \rangle_{\Omega_R} \psi(t) \, \dt    \\ 
  - \int_{{\mathbb T}}  \left( f(t),\Psi  \right)_{\Omega_R} \psi(t) \, \dt \quad ( \Psi  \in W_R,  \psi \in {\mathcal D}({\mathbb T})),
\end{multline*}
where now $\langle \cdot , \cdot  \rangle_{\Omega_R}$ represents the duality pairing between $W'_R$ and $W_R$. 

Let ${\mathbb P}_{V_R^\perp}$ be the projection operator from $W_R$ onto $V_R^\perp$, when considering the decomposition $W_R = V_R \oplus V_R^\perp$, orthogonal with respect to the inner product of $W_R$. Then ${\mathbb P}^*_{V_R^\perp}: (V_R^\perp)' \to W'_R$ and its range is given by 
$$
\text{Ran}({\mathbb P}^*_{V_R^\perp})=V_R^0 :=  \left\{ F \in W'_R: \, \left\langle F , u \right\rangle_{\Omega_R} = 0, \, \forall u \in V_R \right\} \cong (V_R^\perp)'.
$$
From the previous results for the velocity field, we have ${\mathcal F} \in {\mathcal D}'({\mathbb T};V_R^0)$. This means 
\[
{\mathcal F}  = {\mathbb P}^*_{V_R^\perp}{\mathcal F},
\]
and ${\mathbb P}^*_{V_R^\perp}{\mathcal F}$ is given by
\[
\begin{aligned}
  \langle {\mathbb P}^*_{V_R^\perp}{\mathcal F} (\psi),\Psi \rangle := &  - \int_{{\mathbb T}} \langle {\mathbb P}^*_{V_R^\perp} v(t),\Psi  \rangle_{\Omega_R} \psi'(t)  \,\dt  \\
&  +  \int_{{\mathbb T}}  \langle {\mathbb P}^*_{V_R^\perp} {\mathcal A} v(t) + {\mathbb P}^*_{V_R^\perp} {\mathcal C}(v(t) ,v(t)), \Psi  \rangle_{\Omega_R} \psi(t)  \,\dt    \\ 
&   - \int_{{\mathbb T}}  \langle {\mathbb P}^*_{V_R^\perp} f(t),\Psi  \rangle_{\Omega_R} \psi(t)  \,\dt \quad ( \Psi  \in W_R,  \psi \in {\mathcal D}({\mathbb T})).
\end{aligned}
\]
Consider  the operator $B: V_R^{\perp} \to L^2(\Omega_R)$ defined by
\[
\langle B v,p \rangle  = b(v,p) =   - \int_{\Omega_R} ( \nabla \cdot v ) p \, \dx \qquad (p\in L^2(\Omega)),
\]
which is an isomorphism. Then $B^*: L^2(\Omega_R)  \to V_R^0 = \text{Ran}({\mathbb P}^*_{V_R^\perp})$,
\[
\langle v, B^* p \rangle  = b(v,p) =   - \int_{\Omega_R} ( \nabla \cdot v ) p \, \dx \qquad (v\in V_R^\perp), 
\]
is also an isomorphism. Therefore, there exists $p_0 \in L^\infty({\mathbb T};L^2(\Omega))$, $p_1,p_3 \in L^2({\mathbb T};L^2(\Omega))$, $p_2 \in L^{4/3}({\mathbb T};L^2(\Omega))$ such that
\[
\begin{aligned}
\int_{{\mathbb T}} \langle {\mathbb P}^*_{V_R^\perp} v(t),\Psi  \rangle_{\Omega_R} \psi'(t)  \dt & = - \int_{{\mathbb T}} \int_{\Omega_R} p_0 (t,x) \psi'(t)  ( \nabla \cdot \Psi )(x)  \, \dx \dt , \\
\int_{{\mathbb T}}  \langle {\mathbb P}^*_{V_R^\perp} {\mathcal A} v(t) , \Psi  \rangle_{\Omega_R} \psi(t)  \dt & = - \int_{{\mathbb T}} \int_{\Omega_R}  p_1 (t,x) \psi(t)  ( \nabla \cdot \Psi )(x) \, \dx \dt , \\
\int_{{\mathbb T}}  \langle  {\mathbb P}^*_{V_R^\perp} {\mathcal C}(v(t) ,v(t)), \Psi  \rangle_{\Omega_R} \psi(t)  \dt  & = - \int_{{\mathbb T}} \int_{\Omega_R}  p_2 (t,x) \psi(t)  ( \nabla \cdot \Psi  )(x) \, \dx \dt  , \\
\int_{{\mathbb T}}  \langle {\mathbb P}^*_{V_R^\perp} f(t),\Psi  \rangle_{\Omega_R} \psi(t)  \dt & = - \int_{{\mathbb T}} \int_{\Omega_R} p_3 (t,x) \psi(t)  ( \nabla \cdot \Psi  )(x)  \, \dx \dt  .
\end{aligned}
\]
Hence
\[
\begin{aligned}
& - \int_{{\mathbb T}} \left( v(t),\Psi  \right)_{\Omega_R} \psi'(t)  \dt  -  \int_{{\mathbb T}} \left( p_0 (t) ,  \nabla \cdot \Psi \right)_{\Omega_R} \psi'(t)  \, \dx \dt   \\
& \quad +  \int_{{\mathbb T}} a(v(t),\Psi ) \psi(t)  \dt +   \int_{{\mathbb T}}  c(v(t),v(t),\Psi ) \psi(t)  \dt    \\ 
&\qquad = \int_{{\mathbb T}}  \left( f(t),\Psi  \right)_{\Omega_R} \psi(t)  \dt
- \int_{{\mathbb T}} \left(p_1 (t) + p_2(t) + p_3(t) , \nabla \cdot \Psi \right)_{\Omega_R} \psi(t)   \dx \dt ,
\end{aligned}
\]
for all $\Psi  \in W_R$ and $\psi \in {\mathcal D}(\mathbb T)$,
which shows~\eqref{eq:weak.form}.
This completes the existence proof.

Concerning uniqueness, let us suppose that, in addition to the weak solution $(v,p)$ already constructed, there exists a more regular solution $(\tilde v,\tilde p)$ as formulated in the theorem.
To derive an estimate of 
$\overline{v} := v-\tilde v$ 
in the norm $\| \cdot  \|_{(R,|\zeta|)}$ defined in~\eqref{eq:innerproduct.zeta},
we can argue as in the proof of Lemma~\ref{lem:relineq} below,
where we compare a weak solution with a strong solution in the exterior domain.
Instead of using the strong formulation and integrating by parts in space, 
we here employ the weak formulation for $(\tilde v,\tilde p)$.
Since~$\bop(\tilde v,\tilde p)=0$ on $\partial B_R$ in a weak sense,
several terms from the derivation of~\eqref{Is6} do not appear, 
and we arrive at
$$
\begin{aligned}
\int_\torus\| \overline v  \|_{(R,|\zeta|)}^2\,\dt
\leq &   \int_{{\mathbb T}\times \Omega_R} \overline{v} \cdot \nabla \overline{v} \cdot \tilde{v} \, \dx \dt  
 - 
\frac 12 \int_{{\mathbb T}\times \partial B_R} \left( \overline{v} \cdot \frac{x}{R}\right) (\tilde{v}  \cdot \overline{v}) \, \dS \dt  \\
= &  -  \int_{{\mathbb T}\times \Omega_R} \overline{v} \cdot \nabla \tilde{v} \cdot  \overline{v} \, \dx \dt  +
\frac{1}{2} \int_{{\mathbb T}\times \partial B_R} \left( \overline{v} \cdot \frac{x}{R}\right) (\tilde{v}  \cdot \overline{v}) \, \dS \dt .
\end{aligned}
$$
Then
\[
\int_\torus\| \overline v  \|_{(R,|\zeta|)}^2\,\dt
\leq \, C(\Omega_R)\big( \| \nabla \tilde{v} \|_{L^\infty({\mathbb T};L^2(\Omega_R))}  + \| \tilde{v} \|_{L^\infty({\mathbb T};L^2(\partial B_R))}  \big)\| \nabla \overline{v} \|^2_{L^2\left({\mathbb T} \times \Omega_R\right)} ,
\]
and $\| \nabla \overline{v} \|^2_{L^2\left({\mathbb T} \times \Omega_R\right)} =  \| \overline{v} \|_{L^2\left({\mathbb T}\times \partial B_R\right)} ^2= 0$ follows from the assumption~\eqref{eq:smallness.uniqueness} if $\delta>0$ is sufficiently small.
\end{proof}

\begin{rem} A similar uniqueness result can be established under the assumption 
 \[ \| \tilde{v} \|_{L^\infty({\mathbb T};L^3(\Omega_R))}  +  \| \tilde{v} \|_{L^\infty({\mathbb T};L^2(\partial B_R))}  \leq \delta,\] 
instead of~\eqref{eq:smallness.uniqueness},
in which case the last estimate is replaced with
$$
\begin{aligned}
\int_\torus\| \overline v  \|_{(R,|\zeta|)}^2\,\dt
\leq  C(\Omega_R) \left( \| \tilde{v} \|_{L^\infty({\mathbb T};L^3(\Omega_R))}  +  \| \tilde{v} \|_{L^\infty({\mathbb T};L^2(\partial B_R))}  \right) \| \nabla \overline{v} \|^2_{L^2\left({\mathbb T} \times \Omega_R\right)} .
\end{aligned}
$$
\end{rem}

\section{Estimates of the truncation error}
\label{error}

Consider the strong solution $(\uvel,\upres)$ 
to problem~\eqref{eq:nstp} in the exterior domain $\Omega$
and the weak solution $(\uvel_R,\upres_R)=(\vvel,\vpres)$
to problem~\eqref{eq:nstp.pert} in the truncated domain $\Omega_R$,
which were established in Theorem~\ref{thm:strongsol}
and Theorem~\ref{thm:weakR}, respectively.
In the following theorem we provide 
an estimate of the approximation error
under the assumption that the total flux $\Phi$ through $\partial\Omega$, 
defined in~\eqref{eq:flux},
is constant in time.

\begin{thm}
\label{thm:error}
Under the assumptions of Theorems  \ref{thm:strongsol} and \ref{thm:weakR},
and if $\ddt \Phi=0$,
there exist positive constants $C_i$, $i=0,1,2$, independent of $R$, such that if $\varepsilon \leq \sfrac{1}{C_0}$ then
\begin{equation}\label{eq:error.est}
\| \nabla u - \nabla u_R \|_{L^2({\mathbb T}\times \Omega_R)} 
 + \| u -  u_R \|_{L^2({\mathbb T}\times \partial B_R)} 
\leq    (C_1 \varepsilon + C_2 \varepsilon^2 ) \frac{1}{R^{\sfrac{1}{2}}}.
\end{equation}
\end{thm}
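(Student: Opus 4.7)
The plan is to set $\overline v := u - u_R$ on $\Omega_R$ and to test the equation it satisfies with $\overline v$ itself. Since $\overline v|_\Sigma = 0$ and $\nabla\cdot\overline v = 0$, I first multiply the strong equation for $u$ on $\Omega_R$ by $\overline v$ and integrate by parts (producing boundary terms only on $\partial B_R$), then subtract the weak formulation~\eqref{eq:weak.form} for $u_R$ tested with the same $\overline v$. Rewriting the convective difference as $u\cdot\nabla u - u_R\cdot\nabla u_R = u\cdot\nabla\overline v + \overline v\cdot\nabla u_R$, using time-periodicity of $\overline v$, and adding and subtracting $\frac{1+\wakefct}{R}u\cdot\overline v$ and $\frac12(u\cdot n)(u\cdot\overline v)$ on $\partial B_R$ to reconstitute $\bop(u,p)\cdot\overline v$, I arrive at the identity
\begin{equation*}
\int_\torus \|\overline v\|^2_{(R,|\zeta|)}\,\dt
= \int_\torus\!\int_{\partial B_R}\! \bop(u,p)\cdot\overline v\,\dS\,\dt
- \int_\torus\!\int_{\Omega_R}\! \overline v\cdot\nabla u\cdot\overline v\,\dx\,\dt
+ \frac12\!\int_\torus\!\int_{\partial B_R}\! (\overline v\cdot n)(u\cdot\overline v)\,\dS\,\dt,
\end{equation*}
the coercive left-hand side emerging from the algebraic identity $\frac{1+\wakefct(x)}{R}-\frac12(\zeta\cdot\frac{x}{R}) = \frac{1}{R}+\frac{|\zeta|}{2}$ on $\partial B_R$.

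Each right-hand side piece is then estimated using the pointwise decay in Theorem~\ref{thm:strongsol}; under the constant-flux assumption this yields $|\proj u|\le\varepsilon\nu^{-1}_{-1}$, $|\nabla\proj u|\le\varepsilon\nu^{-3/2}_{-3/2}$, $|\proj p|\le\varepsilon|x|^{-2}$, together with the faster isotropic bounds for $\projcompl u$, $\nabla\projcompl u$ and $\projcompl p$. A term-by-term estimate of $\bop(u,p)$ on $\partial B_R$ combined with the anisotropic surface-integral bound~\eqref{far} for $\mathcal{J}_R(a,b)$ from~\eqref{eq:Jab} gives $\|\bop(u,p)\|_{L^2(\torus\times\partial B_R)}\le C\varepsilon R^{-1}$. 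Using Cauchy--Schwarz on $\partial B_R$, the trivial bound $\|\overline v\|^2_{L^2(\partial B_R)}\le R\|\overline v\|^2_{(R,|\zeta|)}$, and Young's inequality, the boundary error is controlled by $\eta\int_\torus\|\overline v\|^2_{(R,|\zeta|)}\,\dt + C\varepsilon^2 R^{-1}/\eta$. The volume term is handled by splitting $\Omega_R$ into a fixed neighborhood of $\Sigma$ (on which a standard Poincar\'e inequality applies with an $R$-independent constant) and the exterior piece, where Lemma~\ref{estdk} supplies $\|\overline v/|x|\|_{L^2}\le C(S)\|\overline v\|_{(R)}$, paired with $|\nabla u|\le C\varepsilon|x|^{-3/2}$; the smallness $\varepsilon\le 1/C_0$ then makes this contribution absorbable of the form $C\varepsilon\int_\torus\|\overline v\|^2_{(R,|\zeta|)}\,\dt$. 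The cubic boundary term uses $|u|\le\varepsilon R^{-1}$ on $\partial B_R$ and is likewise absorbable. Choosing $\eta$ and $\varepsilon$ sufficiently small, the energy identity becomes $\int_\torus\|\overline v\|^2_{(R,|\zeta|)}\,\dt\le (C_1\varepsilon+C_2\varepsilon^2)^2 R^{-1}$; taking square roots and using that the $(R,|\zeta|)$ norm controls both $\|\nabla\overline v\|_{L^2(\Omega_R)}$ and $\|\overline v\|_{L^2(\partial B_R)}$ (the latter with an $R$-independent constant depending on $|\zeta|$) yields~\eqref{eq:error.est}.

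The main obstacle I anticipate is keeping all constants uniform in $R$ in the volume cross term $\int\overline v\cdot\nabla u\cdot\overline v$: the standard Poincar\'e constant on $\Omega_R$ diverges as $R\to\infty$, so one cannot simply estimate $\|\overline v\|_{L^2(\Omega_R)}$ by $\|\nabla\overline v\|_{L^2(\Omega_R)}$. Instead the argument must combine Lemma~\ref{estdk} with the anisotropic pointwise decay of $\nabla u$ from Theorem~\ref{thm:strongsol} on the exterior part and use a genuine Poincar\'e inequality only on a fixed bounded neighborhood of $\Sigma$. A careful, component-by-component accounting of $\proj u$, $\projcompl u$, $\proj p$ and $\projcompl p$ in $\bop(u,p)$ with the appropriate anisotropic weight $\nu^{-a}_{-b}$ in each case is equally essential for ensuring that $\|\bop(u,p)\|^2_{L^2(\torus\times\partial B_R)}$ scales like $R^{-2}$, which in turn drives the final $R^{-1/2}$ rate in~\eqref{eq:error.est}.
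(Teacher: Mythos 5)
Your overall plan — derive an energy-type inequality for $w := u - u_R$ by testing the strong equation for $u$ against $w$, combine with the weak formulation and energy inequality for $u_R$, then estimate the resulting boundary and volume terms using the decay of $(u,p)$ and Lemma~\ref{estdk} — is the right one and matches the structure of the paper's argument (Lemma~\ref{lem:relineq} plus the term-by-term estimate). After integration by parts one can check that your claimed identity
\[
\int_\torus \|\overline v\|^2_{(R,|\zeta|)}\,\dt
= \int_\torus\int_{\partial B_R}\bop(u,p)\cdot\overline v\,\dS\,\dt
- \int_\torus\int_{\Omega_R}\overline v\cdot\nabla u\cdot\overline v\,\dx\,\dt
+ \tfrac12\int_\torus\int_{\partial B_R}(\overline v\cdot n)(u\cdot\overline v)\,\dS\,\dt
\]
is term-by-term equivalent to the paper's inequality~\eqref{Is6}. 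However, there are two genuine gaps in the way you propose to obtain and then exploit it.

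First, $\overline v = u - u_R$ is not an admissible test function in the weak formulation~\eqref{eq:weak.td}: that formulation requires $\Phi\in H^1(\torus;V_R)$, but the weak solution $u_R$ only satisfies $u_R\in L^2(\torus;V_R)\cap L^\infty(\torus;H_R)$ with $\partial_t u_R\in L^{4/3}(\torus;V_R')$, so $\overline v\notin H^1(\torus;V_R)$. This is the same obstacle that prevents one from proving an energy \emph{identity} for Leray--Hopf-type weak solutions. The paper circumvents it by writing $w=\mu-\vartheta$ with $\mu=u-\tilde h\in H^1(\torus;V_R)$ and $\vartheta=u_R-\tilde h$, evaluating $\int(\mu,w)_{(R,|\zeta|)}\dt$ from the strong equation, evaluating $-\int(\vartheta,\mu)_{(R,|\zeta|)}\dt$ by testing the weak formulation with the admissible $\mu$, and bounding $\int\|\vartheta\|^2_{(R,|\zeta|)}\dt$ from above by the energy inequality~\eqref{enin}. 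This is why~\eqref{Is6} is an inequality, not an identity. You need some version of this argument (or an equivalent regularization), not a direct pairing with $\overline v$.

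Second, your estimate of the volume term does not close as stated. You propose to bound $\int_{\Omega_R\setminus B_S}|\overline v|^2|\nabla u|$ using Lemma~\ref{estdk} together with $|\nabla u|\le C\varepsilon|x|^{-3/2}$. But Lemma~\ref{estdk} controls only $\int|\overline v|^2|x|^{-2}$, so this leaves a residual factor $|x|^{1/2}$: on $\Omega_R$ you get
$\int|\overline v|^2|x|^{-3/2}\le R^{1/2}\int|\overline v|^2|x|^{-2}\le C R^{1/2}\|\overline v\|^2_{(R)}$,
and the extra $R^{1/2}$ destroys the absorption. The $(1+\wakefct)^{-3/2}$ factor in the anisotropic bound $|\nabla\proj u|\le\varepsilon\nu^{-3/2}_{-3/2}$ does not save you, since in the wake region $(1+\wakefct)$ stays bounded while $|x|\to\infty$. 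The paper avoids this by keeping the volume term in the form $\int w\cdot\nabla w\cdot u$ (one gradient on $w$, not two factors of $w$) and using the stronger $|x|^{-1}$ pointwise decay of $u$ itself: $\int|w/|x||\cdot|\nabla w|\cdot||x|u|\le C(S)\varepsilon\|w\|^2_{(\torus,R)}$. Since your form is obtained from the paper's by integrating by parts, the fix is simply to integrate back — use $\int\overline v\cdot\nabla\overline v\cdot u$ with $\norm{|x|u}_{L^\infty}\le\varepsilon$, exactly as in the paper. As written, your proposed estimate fails.

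The remaining parts of your plan — the coercive left-hand side from the algebraic identity on $\partial B_R$, the $L^2(\torus\times\partial B_R)$ estimate of $\bop(u,p)$ via~\eqref{far} and the constant-flux decay~\eqref{eq:pres.constantflux}, and the absorption/Young argument leading to the $R^{-1/2}$ rate — are correct and consistent with the paper.
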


To prove Theorem~\ref{thm:error}, 
consider the error $(w,\wpres):= (\uvel,\upres) - (\uvel_R,\upres_R)$ associated with the approximation of $ (\uvel,\upres)$ by $(\uvel_R,\upres_R)$.
We measure this error in terms of the following inequality.
\begin{lem}
\label{lem:relineq}
The difference $w:= \uvel- \uvel_R$ satisfies
\begin{equation}
\begin{aligned}
&   \| \nabla w \|^2_{L^2\left({\mathbb T} \times \Omega_R\right)} + \frac{1}{R}  \| w \|^2_{L^2\left({\mathbb T}\times \partial B_R\right)} + \frac{|\zeta|}{2}  \| w \|^2_{L^2\left({\mathbb T}\times \partial B_R\right)}\\
&  \leq  \int_{{\mathbb T}\times \Omega_R} w \cdot \nabla w \cdot u \, \dx \, \dt  -  \int_{{\mathbb T}\times \partial B_R} \frac 12 \left(w \cdot \frac{x}{R}\right) (u  \cdot w) \,  \dS(x) \dt \\
& \quad  -  \int_{{\mathbb T}\times \partial B_R} \frac 12 \left(u \cdot \frac{x}{R}\right)  (u \cdot w)   \,  \dS(x)   \, \dt   
 + \int_{{\mathbb T}\times \partial B_R}   \frac 1R  \left( 1 + \wakefct(x)  \right) (u \cdot w) \,  \dS(x)   \, \dt \\
& \quad  +  \int_{{\mathbb T}\times \partial B_R}  \frac{x}{R} \cdot \nabla u \cdot w  \,  \dS(x)   \, \dt  - \int_{{\mathbb T}\times \partial B_R}     \upres 
 \left( \frac{x}{R} \cdot w \right)    \,  \dS(x)   \, \dt .
\end{aligned}
\label{Is6}
\end{equation}
\end{lem}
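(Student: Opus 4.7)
The strategy is a relative-energy computation. Formally subtracting \eqref{eq:nstp} and \eqref{eq:nstp.pert} yields, for $w\coloneqq\uvel-\uvel_R$,
\[
\partial_t w-\Delta w-\zeta\cdot\nabla w+\nabla(\upres-\vpres)+\uvel\cdot\nabla\uvel-\uvel_R\cdot\nabla\uvel_R=0\quad\text{in }\torus\times\Omega_R,
\]
together with $w=0$ on $\torus\times\Sigma$, $\nabla\cdot w=0$, and $\bop(\uvel_R,\vpres)=0$ on $\torus\times\partial B_R$. I take the scalar product with $w$ and integrate over $\torus\times\Omega_R$. By time-periodicity the $\partial_t$-term vanishes. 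Integration by parts on the Laplacian produces $\int|\nabla w|^2-\int_{\partial B_R}(x/R)\cdot\nabla w\cdot w$ (the integral over $\Sigma$ is killed by $w|_\Sigma=0$); the Oseen drift yields $-\tfrac12\int_{\partial B_R}(\zeta\cdot x/R)|w|^2$; and the pressure gradient reduces to $\int_{\partial B_R}(\upres-\vpres)(x/R\cdot w)$ since $\nabla\cdot w=0$. Writing the convective difference as $\uvel_R\cdot\nabla w+w\cdot\nabla\uvel$ and using $\nabla\cdot\uvel_R=0$ with $w|_\Sigma=0$, the first summand tested against $w$ contributes $\tfrac12\int_{\partial B_R}(\uvel_R\cdot x/R)|w|^2$, while the second is kept as $\int_{\Omega_R}(w\cdot\nabla\uvel)\cdot w$ for the moment.

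\textbf{Boundary reorganization.} The artificial boundary condition \eqref{abc} rewrites $(x/R)\cdot\nabla\uvel_R-\vpres(x/R)=\tfrac12(\uvel_R\cdot x/R)\uvel_R-\tfrac{1+\wakefct(x)}{R}\uvel_R$ on $\partial B_R$. Inserting this through $(x/R)\cdot\nabla w=(x/R)\cdot\nabla\uvel-(x/R)\cdot\nabla\uvel_R$, the $\vpres$-contributions cancel exactly against those of the pressure integral, leaving a term in $\upres$. On $\partial B_R$ one has $(1+\wakefct(x))/R=1/R+|\zeta|/2+(\zeta\cdot x)/(2R)$; decomposing $\uvel_R=\uvel-w$ in the term $-\int\tfrac{1+\wakefct}{R}(\uvel_R\cdot w)$ generates $(1/R+|\zeta|/2)\int|w|^2$ together with a $(\zeta\cdot x)/R$ piece that cancels the Oseen boundary integral exactly, producing the desired coercive quantities on the left-hand side. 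Using once more $\uvel_R=\uvel-w$, the algebraic identity
\[
\tfrac12(\uvel_R\cdot x/R)\bp{(\uvel_R+w)\cdot w}=\tfrac12(\uvel_R\cdot x/R)(\uvel\cdot w)
\]
consolidates the two $(\uvel_R\cdot x/R)$-type boundary terms. Finally, an integration by parts (legitimate since $\nabla\cdot w=0$ and $w|_\Sigma=0$) converts $\int_{\Omega_R}(w\cdot\nabla\uvel)\cdot w$ into $-\int_{\Omega_R}(w\cdot\nabla w)\cdot\uvel+\int_{\partial B_R}(w\cdot x/R)(\uvel\cdot w)$, and after collecting everything the right-hand side of \eqref{Is6} emerges exactly.

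\textbf{Main obstacle.} The derivation above is formal because $\uvel_R$ has only $\partial_t\uvel_R\in L^{4/3}(\torus;V_R')$ and cannot be inserted pointwise into the strong equation for $\uvel$. The remedy is the standard weak--strong scheme: use $\uvel$, which enjoys the regularity supplied by Theorem~\ref{thm:strongsol}, as an admissible test function in the weak formulation \eqref{eq:weak.form} for $\uvel_R$ to obtain an identity for the cross term $\int\nabla\uvel:\nabla\uvel_R$ and its boundary counterparts; combine it with the pointwise energy identity obtained by testing \eqref{eq:nstp} against $\uvel$ and against $\uvel_R$ on $\Omega_R$; and then add the energy inequality \eqref{enin} for $\uvel_R$ tested against itself. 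This last step, which provides only $\geq$ on the $\uvel_R$ self-contribution, is precisely what turns the otherwise formal equality into the inequality~\eqref{Is6}.
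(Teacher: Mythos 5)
Your formal derivation is correct: subtracting the two systems, testing the difference equation with $w$, substituting $\bop(\uvel_R,\vpres)=0$ on $\partial B_R$, and recombining terms does reproduce the right-hand side of~\eqref{Is6} (with equality). Your identification of the obstacle (that $\uvel_R$ is only a weak solution with $\partial_t\uvel_R\in L^{4/3}(\torus;V_R')$) and of the remedy (a weak--strong scheme in which the energy inequality~\eqref{enin} supplies the $\leq$) is also the route the paper takes.

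The gap is concrete and lies in your stated remedy. You propose to ``use $\uvel$ \ldots\ as an admissible test function in the weak formulation \eqref{eq:weak.form} for $\uvel_R$,'' but $\uvel$ is \emph{not} admissible: the test space $W_R$ in~\eqref{eq:weak.form} (resp.\ $V_R$ in the equivalent form~\eqref{eq:weak.td}) requires the trace on $\Sigma$ to vanish, whereas $\uvel|_\Sigma = h$, which is nonzero in general. The issue is not regularity, which $\uvel$ has in abundance, but the boundary trace. The paper instead tests with $\mu=\uvel-\tilde h$, where $\tilde h$ is the solenoidal extension of $h$ from Lemma~\ref{div}; $\mu$ vanishes on $\Sigma$ and has the needed time-Sobolev regularity, so $\mu\in H^1(\torus;V_R)$ is admissible in~\eqref{eq:weak.td}. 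This is not cosmetic, because $\tilde h$ enters your plan in a second, unavoidable place: the energy inequality~\eqref{enin} is stated for $\vartheta=\uvel_R-\tilde h$, not for $\uvel_R$ itself. The paper's decomposition $\|w\|_{(R,|\zeta|)}^2 = (\mu,w)_{(R,|\zeta|)} - (\vartheta,\mu)_{(R,|\zeta|)} + \|\vartheta\|_{(R,|\zeta|)}^2$ (using $w=\mu-\vartheta$) is engineered precisely so that the $\tilde h$-dependent terms produced by the strong equation, the weak formulation tested with $\mu$, and the energy inequality~\eqref{enin} combine and cancel, leaving the clean inequality~\eqref{Is6} in terms of $\uvel$, $w$, and $\upres$ only. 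Your plan never mentions $\tilde h$, and without tracking it the rigorous version of your weak--strong scheme does not close: you would be left with extra $\tilde h$-terms from~\eqref{enin} and no legitimate test function to pair with $\uvel_R$.
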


\begin{proof}
In what follows, we again consider the inner product $(\cdot , \cdot)_{(R;|\xi|)}$ in $H^1(\Omega_R)^3$ defined in~\eqref{eq:innerproduct.zeta},
and the multi-linear forms $a$ and $c$ defined in~\eqref{defab} and~\eqref{defc}, respectively.
Recall the notation $\vartheta := \uvel_R - \tilde{h}$,
and define $\mu := \uvel - \tilde{h} \in H^1(\mathbb T;V_R)$, so that $w = \uvel - \uvel_R = \mu - \vartheta$. Then, we have
\[
 \int_{\torus} \| w \|_{(R,|\zeta|)}^2 \dt  
=  \int_{\torus} ( \mu , w)_{(R,|\zeta|)} \dt -  \int_{\torus} (\vartheta, \mu)_{(R,|\zeta|)} \dt +  \int_{\torus} \| \vartheta \|_{(R,|\zeta|)}^2 \dt 
\]
Integration by parts in $\Omega_R$ and the fact that $(\uvel,\upres)$ 
is a strong solution to~\eqref{eq:nstp}
yield
\[
\begin{aligned}
 \int_{\torus} &(\mu,w)_{(R,|\zeta|)} \dt \\  
  = &  \int_{\torus \times \partial B_R} \left(  \frac 1R + \frac{|\zeta|}{2} \right)  (\mu \cdot w) \, \dS \dt 
  -  \int_{\torus} \left( \partial_t  u,  w \right)_{\Omega_R} \dt + \int_{\torus \times \Omega_R}  \zeta \cdot \nabla u  \cdot w \, \dx \dt \\
  & -  \int_{\torus \times \Omega_R} u \cdot \nabla u  \cdot w  \, \dx \dt  +  \int_{\torus} ( f, w)_{\Omega_R} \dt  - \int_{\torus \times \Omega_R} \nabla \tilde{h} : \nabla w\, \dx \dt  \\
  & +  \int_{\torus \times \partial B_R} \frac xR \cdot \nabla u \cdot w \, \dS(x) \dt  - \int_{\torus \times \partial B_R} \frac xR \cdot  w \upres \, \dS(x) \dt .
\end{aligned}
\]
We next take the test function $\Phi=\mu=u-\tilde h$ in the weak formulation~\eqref{eq:weak.td},
which is admissible 
since $\mu\in L^2(\torus; V_R)$,
$\tilde h\in H^1(\torus;H^1(\Omega)^3)$
and $u\in H^1(\torus;L^p(\Omega_R)^3)$ for any $p\in(1,\infty)$.
Decomposing $\vartheta=u_R-\tilde h$, we get
\[
\begin{aligned}
- \int_{\torus} (\vartheta,\mu)_{(R,|\zeta|)} \dt  
= &- \int_{\torus}(u_R,\partial_t\mu)\,\dt
+\int_{\torus} c(u_R,u_R,\mu)\,\dt
-\int_{\torus}(f,\mu)_{\Omega_R}\,\dt 
+\int_{\torus} a(\tilde h,\mu)\,\dt
\\
&- \int_{\torus}  \int_{\Omega_R}  \zeta \cdot \nabla \vartheta  \cdot \mu \, \dx \dt + \int_{\torus}  \int_{\partial B_R}  \frac 1R  \frac{(\zeta \cdot x)}{2} (\vartheta \cdot \mu) \, \dS(x) \dt .
\end{aligned}
\]
Since
$\vartheta = \uvel_R - \tilde{h}$ satisfies the energy inequality \eqref{enin}, we further have 
\[
\int_{\torus} \| \vartheta \|_{(R,|\zeta|)}^2 \dt
\leq  \int_{\torus} \big( f  - \partial_t \tilde{h}, \vartheta \big)_{\Omega_R}  \dt
- \int_{\torus} c(u_R,\tilde h,\vartheta)\,\dt - \int_{\torus}a(\tilde h,\vartheta) \,\dt.
\]
To combine the terms in the above expressions,
we take into account the identities
\[
\begin{aligned}
(f,w)_{\Omega_R} - (f,\mu)_{\Omega_R}+(f,\vartheta)_{\Omega_R}&=0,
\end{aligned}
\]
as well as
\[
\begin{aligned}
\int_\torus \big[-\left( \partial_t  u,  w \right)_{\Omega_R}-  (u_R,\partial_t\mu)_{\Omega_R} 
&- (\partial_t \tilde h,\vartheta)_{\Omega_R}\big]\,\dt
=\int_\torus \big[ -\left( \partial_t  u,  u \right)_{\Omega_R}+(\tilde h,\partial_t \tilde h)_{\Omega_R}\big]\,\dt
\\
&=  \int_\torus \frac{\mathrm d}{\dt}\int_{\Omega_R}\big[-\frac{1}{2}|u|^2+\frac{1}{2}|\tilde h|^2\big]\,\dx\dt = 0.
 \end{aligned}
\]
Due to the identity 
\[
\begin{aligned}
&\int_{\Omega_R}  \zeta \cdot \nabla u  \cdot w \, \dx
- \int_{\Omega_R} \nabla \tilde{h} : \nabla w\, \dx 
+a(\tilde h,\mu)-a(\tilde h,\vartheta)
\\
&\qquad
= \int_{\Omega_R}  \zeta \cdot \nabla \mu  \cdot w \, \dx
+ \int_{\partial B_R}\frac{1}{R}(1+\wakefct(x))\tilde h\cdot w\,\dS(x),
\end{aligned}
\]
we can further collect the terms related with $\zeta$ as
\[
\begin{aligned}
&\int_{\partial B_R} \left(  \frac 1R + \frac{|\zeta|}{2} \right)  (\mu \cdot w) \, \dS
+\int_{\Omega_R}  \zeta \cdot \nabla u  \cdot w \, \dx
- \int_{\Omega_R} \nabla \tilde{h} : \nabla w\, \dx 
+a(\tilde h,\mu) 
\\
&\qquad\qquad
- \int_{\Omega_R}  \zeta \cdot \nabla \vartheta  \cdot \mu \, \dx + \int_{\partial B_R}  \frac 1R  \frac{(\zeta \cdot x)}{2} (\vartheta \cdot \mu) \, \dS(x)
-a(\tilde h,\vartheta)
\\
&
= \int_{\partial B_R}\frac{1}{R}(1+\wakefct(x))(u\cdot w)\,\dS(x)
- \int_{\partial B_R}  \frac 1R  \frac{(\zeta \cdot x)}{2} (w \cdot \mu) \, \dS(x)
+\int_{\Omega_R}  \zeta \cdot \nabla \mu  \cdot w \, \dx
\\
&\qquad\qquad
+ \int_{\Omega_R}  \zeta \cdot \nabla \mu  \cdot \vartheta \, \dx 
- \int_{\partial B_R}  \frac 1R  \frac{(\zeta \cdot x)}{2} (\vartheta \cdot \mu) \, \dS(x)
\\
&= \int_{\partial B_R}\frac{1}{R}(1+\wakefct(x))(u\cdot w)\,\dS(x),
\end{aligned}
\]
where we used integration by parts and that $\mu=w+\vartheta$.
Recalling the property~\eqref{cigual0} of $c$,
we further have
\[
\begin{aligned}
&c(u_R,u_R,\mu)-c(u_R,\tilde h,\vartheta)
\\ 
&\quad
=c(u_R,u_R,\mu)-c(u_R,u_R,\vartheta)+c(u_R,\vartheta,\vartheta)
=c(u-w,u-w,w)
\\
&\quad
= c(u,u,w)-c(w,u,w)-c(u,w,w)+c(w,w,w)
=c(u,u,w)+c(w,w,u).
\end{aligned}
\]
In this way, we arrive at
\[
\begin{aligned}
&\int_{\torus} \| w \|_{(R,|\zeta|)}^2 \dt
\\
&\leq 
-  \int_{\torus \times \Omega_R} u \cdot \nabla u  \cdot w  \, \dx \dt   
+  \int_{\torus \times \partial B_R} \frac xR \cdot \nabla u \cdot w \, \dS(x) \dt  - \int_{\torus \times \partial B_R} \frac xR \cdot  w \upres \, \dS(x) \dt 
  \\
&\quad
+\int_{\torus} c(u,u,w)\,\dt
-\int_\torus c(w,w,u)\,\dt
+\int_{\torus\times\partial B_R}\frac{1}{R}(1+\wakefct(x))(u\cdot w)\,\dS(x)\dt.
\end{aligned}
\]
Invoking the definition of $c$, see~\eqref{defc}, 
we conclude~\eqref{Is6}.
\end{proof}

With inequality~\eqref{Is6} at hand,
we now show that the velocity error $\wvel$ tends to zero in appropriate norms. It is useful to recall the properties of strong solutions in the exterior domain, as outlined in Remark \ref{fluxcte}. 

\begin{proof}[Proof of Theorem~\ref{thm:error}]
In order to estimate $\wvel$, we use Lemma~\ref{lem:relineq}
and write~\eqref{Is6} as
\[
\begin{aligned}
 & \int_\torus\| w \|^2_{(R,|\zeta|)}\,\dt
 \leq  \int_{{\mathbb T}\times \Omega_R} w \cdot \nabla w \cdot u \, \dx \, \dt  +  \int_{{\mathbb T}\times \partial B_R}\left[ - \frac 12 \left(w \cdot \frac{x}{R}\right) \right] (u  \cdot w) \,  \dS(x)\, \dt  \\
& \quad  +  \int_{{\mathbb T}\times \partial B_R} \left[ - \frac 12 \left(u \cdot \frac{x}{R}\right) \right] (u \cdot w)   \,  \dS(x)  \, \dt 
 + \int_{{\mathbb T}\times \partial B_R}   \frac 1R  \left( 1 + \wakefct(x)  \right) (u \cdot w) \,  \dS(x)   \, \dt \\
& \quad  +  \int_{{\mathbb T}\times \partial B_R}  \frac{x}{R} \cdot \nabla u \cdot w  \,  \dS(x)   \, \dt  + \int_{{\mathbb T}\times \partial B_R}   \left[  -  \upres 
 \left( \frac{x}{R} \cdot w \right)   \right] \,  \dS(x)   \, \dt =: \sum_{i =1}^6 I_j,
\end{aligned}
\]
and we estimate $I_1$, \dots, $I_6$ separately.

Take a fixed $S \in (0,\infty)$ with $\partial B_S\subset\Omega$. Let $R > S$. 
From Lemma~\ref{estdk} and estimate~\eqref{estssol} in Theorem~\ref{thm:strongsol}, we obtain
$$
\begin{aligned}
\left| \int_{{\mathbb T} \times B_R \setminus \overline{B_S}} w \cdot \nabla w \cdot u \, \dx \dt  \right| \, \leq &  \int_{{\mathbb T} \times B_R \setminus \overline{B_S}} \frac{|w(t,x)|}{|x|}|\nabla w(t,x)| |x| |u(t,x)| \, \dx \dt   \\
 \leq & \, C(S) \| w \|_{({\mathbb T},R)}^2 \| u \|_{\infty,\nu^1;\torus\times B^S} 
\end{aligned}
$$
and therefore, by Poincar\'e's and H\"older's inequalities,
$$
\begin{aligned}
  I_1 
 \leq &  \left|  \int_{{\mathbb T}\times \Omega_S} w \cdot \nabla w \cdot u \, \dx \dt \right| + \left| \int_{{\mathbb T} \times B_R \setminus \overline{B_S}} w \cdot \nabla w \cdot u \, \dx \dt \right|
  \\
\leq & \, C(S,\partial \Omega) \|\nabla w \|^2_{L^2({\mathbb T} \times \Omega_S)} \| u \|_{L^\infty(\mathbb T;L^3(\Omega_S))} + 
C(S)\| u \|_{\infty,\nu^1;\torus\times B^S}  \| w \|_{({\mathbb T},R)}^2   \\
\leq &  \, C(S,\partial \Omega)  \left[  \| u \|_{L^\infty(\mathbb T;L^3(\Omega_S))} + \| u \|_{\infty,\nu^1;\torus\times B^S} \right] \| w \|_{({\mathbb T},R)}^2
  \\
\leq &  \, C(S,\partial \Omega) \, \varepsilon \| w \|_{({\mathbb T},R)}^2 .
\end{aligned}
$$
From \eqref{estssol}, we also get the following estimates for the integrals over ${\mathbb T}\times \partial B_R$ involving the velocity $u$: 
\[
\begin{aligned}
I_2 \,
 \leq & \frac 12 \int_{{\mathbb T}\times \partial B_R} \frac{1}{R}\left| w(t,x) \right|^2  |x| |u(t,x)| \, \dS(x) \dt \\
  \leq & \,  \frac 12 \| u \|_{\infty,\nu^1;\torus\times B^S} \frac{1}{R}  \| w \|^2_{L^2({\mathbb T}\times \partial B_R)} 
  \leq  \varepsilon \| w \|_{({\mathbb T},R)}^2, 
\end{aligned}
\]
and analogously,
\[
\begin{aligned}
 I_3 
& \leq  \frac{1}{2 R^{3/2}}\int_{{\mathbb T}\times \partial B_R} |x|^2 |u(t,x)|^2 \frac{|w(t,x)|}{R^{1/2}} \, \dS(x) \dt    \\
& \leq  \frac{C}{R^{1/2}}  \| u \|^2_{\infty,\nu^1;\torus\times B^S}  \| w \|_{({\mathbb T},R)} 
 \leq  \frac{C}{R^{1/2}} \varepsilon^2 \| w \|_{({\mathbb T},R)}
\end{aligned}
\]
and
\[
\begin{aligned}
I_4 
 & \leq \frac{1}{R^{3/2}} \int_{{\mathbb T}\times \partial B_R} |x| \left(1 + \wakefct(x) \right) |u(t,x)| \frac{ |w(t,x)|}{R^{1/2}} \, \dS(x) \dt   \\
& \leq \frac{C}{R^{1/2}}  \| u \|_{\infty,\nu^1_1(\cdot;\zeta);\torus\times B^S
}  \| w \|_{({\mathbb T},R)} \leq \frac{C}{R^{1/2}} \varepsilon \| w \|_{({\mathbb T},R)}.
\end{aligned}
\]
From \eqref{far}, we obtain
$
{\mathcal J}_R(3,3) 
\leq C R^{-2}.
$
Combined with estimates \eqref{estssol}, this yields
\[
\begin{aligned}
I_5
& \leq  R^{1/2} \| \nabla u \|_{\infty,\nu^{3/2}_{3/2}(\cdot;\zeta);\torus\times B^S}  {\mathcal J}_R(3,3)^{1/2} \frac{\| w \|_{L^2\left({\mathbb T}\times \partial B_R\right)}}{R^{1/2}}  \\
& \leq   C \frac{1}{R^{1/2}} \| \nabla u \|_{\infty,\nu^{3/2}_{3/2}(\cdot;\zeta);\torus\times B^S} \| w \|_{({\mathbb T},R)} \leq   C \frac{1}{R^{1/2}}\varepsilon \| w \|_{({\mathbb T},R)}.
\end{aligned}
\]
Finally, the term with the pressure $\upres$ is estimated as
\[
\begin{aligned}
I_6
& \leq   \frac{1}{R^{3/2}} \int_{{\mathbb T}\times \partial B_R} |x|^2  |\upres(t,x)| 
\frac{|w(t,x)|}{R^{1/2}} \, \dS(x) \dt 
 \\
& \leq   \frac{C}{R^{3/2}} \| \upres  \|_{\infty,\nu^2;\torus\times B^S}  R  \frac{\| w \|_{L^2({\mathbb T}\times \partial B_R)}}{R^{1/2}}
\leq   \frac{C}{R^{1/2}} \varepsilon \| w \|_{({\mathbb T},R)}
\end{aligned}
\]
by~\eqref{eq:pres.constantflux}, which holds due to $\ddt \Phi=0$.

In summary, we find
\[
(1 - C_0 \varepsilon) \| w \|_{({\mathbb T},R)}^2 +  \frac{|\zeta|}{2} \| w \|^2_{L^2\left({\mathbb T}\times \partial B_R\right)} 
 \leq     \frac{1}{R^{1/2}}  (C_1 \varepsilon + C_2 \varepsilon^2 )  \| w \|_{({\mathbb T},R)}.
\]
If $\left( 1 - C_0 \varepsilon \right) > 0$,
then (redefining the constants)
\[
\| w \|_{({\mathbb T},R)} +  \| w \|_{L^2\left({\mathbb T}\times \partial B_R\right)} \leq C \left( \| w \|_{({\mathbb T},R)} + \sqrt{\frac{|\zeta|}{2}}  \| w \|_{L^2\left({\mathbb T}\times \partial B_R\right)} \right)
 \leq     \frac{1}{R^{1/2}}  (C_1 \varepsilon + C_2 \varepsilon^2 ),
\]
which gives~\eqref{eq:error.est} and concludes the proof.
\end{proof}

\begin{rem}
For the convergence statement of Theorem~\ref{thm:error},
we had to assume $\ddt \Phi=0$,
that is, that the total flux through the boundary is constant in time.
As shown in Theorem~\ref{thm:strongsol}
this condition ensures that the decay rate of the pressure is $|x|^{-2}$, 
compare Remark~\ref{fluxcte}.
In the previous proof, this lead to a suitable estimate of the term $I_6$,
which cannot be obtained form the weaker rate $|x|^{-1}$
that holds in the general case.
\end{rem}

\subsection*{Acknowledgments}
The research of Thomas Eiter has been funded by Deutsche Forschungsgemeinschaft (DFG) through grant CRC 1114 ``Scaling Cascades in Complex Systems'', Project Number 235221301, Project YIP. Ana L. Silvestre acknowledges the financial support of Funda\c{c}\~ao para a Ci\^encia e a Tecnologia  (FCT), Portuguese Agency for Scientific Research, through the project UIDB/04621/2025 of CEMAT/IST-ID.


\end{document}